\newcommand{\al}[1]{{\color{black}#1}}
\newtheorem{theorem}{Теорема}
\newtheorem{lemma}{Лемма}
\newtheorem{corollary}{Следствие}
\newtheorem{remark}{Замечание}
\newtheorem{assumption}{Предположение}
\newcommand{\dotprod}[2]{\langle #1,#2 \rangle}
\definecolor{orange}{HTML}{FF7F00}
\begin{document}

\textbf{УДК} 519.85

\begin{center}
\textbf{Безградиентные методы федеративного обучения с $l_1$ и $l_2$-рандомизацией для задач негладкой выпуклой стохастической оптимизации}\footnote{
Исследование выполнено при поддержке Министерства науки и высшего образования Российской Федерации (госзадание) №075-00337-20-03 (проект 0714-2020-0005).
}









{\bf
\copyright\,2023 г.\,\,
Б.\,А.~Альашкар$^{1}$,
А.\,В.~Гасников$^{1, 2, 3}$,
Д.\,М.~Двинских$^{4}$,
А.\,В.~Лобанов$^{1,5,6}$\footnote{Основной вклад в статью принадлежит Александру~Лобанову~\textless{}lobbsasha@mail.ru\textgreater{}. Согласно правилам журнала, авторы статьи расставлены в \textbf{алфавитном} порядке.}
}

$^{1}$141700 Долгопрудный, М.о., Институтский пер., 9, НИУ МФТИ;

$^{2}$127051 Москва, Б. Каретный пер., 19, стр. 1, ИППИ РАН;

$^{3}$385000 Республика Адыгея, Майкоп, ул. Первомайская, 208, КМЦ АГУ;

$^{4}$109028 Москва, Покровский б-р, 11, НИУ ВШЭ;

$^{5}$109004 Москва, ул. А. Солженицына, 25, ИСП РАН;

$^{6}$125993 Москва, Волоколамское шоссе, 4, МАИ (НИУ).


Поступила в редакцию: 18.11.2022 г.\\
Переработанный вариант  20.05.2023 г.\\
Принята к публикации ??.??.2023 г.
\end{center}

\renewcommand{\abstractname}{\vspace{-\baselineskip}}
\begin{abstract}
  В статье исследуются негладкие задачи выпуклой стохастической оптимизации. С помощью техники сглаживания, базирующейся на замене значения функции в рассматриваемой точке усредненным значением функции по шару (в $l_1$-норме или $l_2$-норме) малого радиуса с центром в этой точке, исходная задача сводится к гладкой задаче (константа Липшица градиента которой обратно пропорционально радиусу шара). Важным свойством используемого сглаживания является возможность вычислять несмещенную оценку градиента сглаженной функции на основе только реализаций исходной функции. Полученную гладкую задачу стохастической оптимизации предлагается далее решать в распределенной архитектуре федеративного обучения (задача решается параллельно: узлы делают локальные шаги, например, стохастического градиентного спуска, потом коммуницируют -- все со всеми, затем все это повторяется).  Цель статьи -- построить на базе современных достижений в области безградиентной негладкой оптимизации и в области федеративного обучения безградиентные методы решения задач негладкой стохастической оптимизации в архитектуре федеративного обучения.
\end{abstract}

\textbf{Ключевые слова: }  безградиентные методы, методы с неточным оракулом,   федеративное обучение.

\section{Введение}\label{section:introduction}
В статье рассматриваются задачи стохастической оптимизации
\begin{equation}
    \label{Convex_opt_problem0}
    \min_{x \in Q \subseteq \mathbb{R}^{d}}  f(x) := \mathbb{E}_\xi \left[ f(x,\xi) \right]
\end{equation}
и их седловые обобщения. При этом считается, что функция $f(x)$ -- выпуклая, негладкая и имеет ограниченную константу Липшица. Также предполагается, что для наблюдения доступна реализация функции $f(x,\xi)$, но не ее градиент (по $x$) $\nabla f(x,\xi)$. Более того, в работе также рассматривается случай, когда эта реализация доступна с небольшим шумом неслучайной природы. Данная статья основывается на работе \cite{Gasnikov_2022_Smooth_Scheme}, в которой предложен оптимальный алгоритм (с точностью до логарифмических по размерности пространства множителей в оценках оракульных вызовов) решения задачи \eqref{Convex_opt_problem0} сразу по трем критериям: 1) число оракульных вызовов (вычислений $f(x,\xi)$), 2) число последовательных итераций метода (на одной итерации можно вызывать оракул многократно), 3) максимально допустимый уровень (потенциально враждебного) шума, при котором все еще возможно достичь желаемой точности. В основе подхода \cite{Gasnikov_2022_Smooth_Scheme} лежит довольно старая идея (см., например, \cite{Nemirovski_1979}) замены исходной негладкой функции $f(x)$ ее сглаженной версией $f_{\gamma}(x) = \mathbb{E}_{\Tilde{e}} \left[ f(x + \gamma \Tilde{e})\right]$, где $\Tilde{e}$ равномерно распределенный случайный вектор на $B_2^d(1)$ -- единичном евклидовом шаре  с центром в нуле. Для сглаженной функции конечная аппроксимация (с шагом $\gamma$) производной по случайному направлению (выбранному равновероятно на единичной евклидовой сфере) будет несмещенной (рандомизированной) оценкой градиента, обладающей при использовании симметричной разностной аппроксимации хорошей дисперсией \cite{Shamir_2017} -- такой же, как если бы $f(x)$ была гладкой.  

 В другой работе \cite{Tsybakov_2022} показано, что если использовать сглаживание не по евклидовому шару, а по шару в $l_1$-норме, то в определенных ситуациях можно улучшить оценки~\cite{Gasnikov_2022_Smooth_Scheme} на число оракульных вызовов на логарифмический множитель (по размерности пространства). Однако при этом в \cite{Tsybakov_2022} не обсуждался вопрос о числе последовательных итераций и использовалась другая (более узкая) концепция враждебного шума, для которой вопрос о точных нижних оценках, насколько нам известно, по-прежнему остается~открытым.
 
В настоящей статье показано, как можно в теории улучшить результаты работы \cite{Gasnikov_2022_Smooth_Scheme} за счет схемы сглаживания из \cite{Tsybakov_2022} (см. также работу \cite{Gasnikov_2016}, в которой данная схема сглаживания также предлагалась, но анализ был проведен менее точно). Как уже отмечалось~выше, улучшения носят логарифмический масштаб. Отметим при этом, что проведенные в работе численные эксперименты не смогли явно уловить эффект такого улучшения~оценок.

Другим важным направлением развития работы \cite{Gasnikov_2022_Smooth_Scheme} стало обобщение всех результатов~этой работы (и ее модификации со сглаживанием на шаре в $l_1$-норме) на распределенные алгоритмы в архитектуре федеративного обучения \cite{kairouz2021advances}. Отметим, что в федеративном обучении рассматриваются задачи типа \eqref{Convex_opt_problem0}, во-первых, гладкие, во-вторых, с~полноградиенным (стохастическим) оракулом. Задача \eqref{Convex_opt_problem0} решается независимо в каждом узле, потом узлы коммуницируют и вычисляют среднее (по узлам), затем снова начинают независимо решать задачу, стартуя с этого среднего. Через некоторое время, узлы снова коммуницируют, считают среднее и процесс повторяется... В данной работе за счет использования схемы сглаживания (с введением дополнительной рандомизации) получается свести негладкую задачу стохастической оптимизации с безградиентным оракулом, к гладкой задаче стохастической оптимизации с оракулом, выдающим стохастический градиент, в котором стохастика формируется из изначальной стохастики, присущей исходной постановке задачи, и стохастике, возникшей при сглаживании (рандомизированной). Оказалось, что в существующей сейчас литературе практически отсутствуют методы федеративного обучения для негладких задач. По-видимому, это было связано с тем, что для негладких задач в общем случае невозможно осуществлять батч-параллелизацию (параллелизацию по $\xi$). Однако для безградиентного оракула батч-параллелизация возможна за счет появления дополнительной (рандомизированной) случайности в виде случайного направления \cite{Gasnikov_2022_Smooth_Scheme}! Собственно за счет этого и удается перенести результаты работы \cite{Gasnikov_2022_Smooth_Scheme} на архитектуру федеративного обучения. Насколько нам известно результатам, полученным в данном направлении, сейчас нет конкурентов, поэтому здесь не приводится литературный обзор конкурирующих работ.  
 

\section{Основной вклад и структура}
 Основной вклад статьи состоит в следующем.
\begin{itemize}
    
    \item Приводим подробное описание двух схем сглаживания (параллельно): с $l_1$ и $l_2$-рандомизациями. Находим константу Липшица градиента $L_{f_\gamma}$ для $l_1$-рандомизации. Для явного описания оценки второго момента в $l_2$-рандомизации с двухточечной обратной связью находим константу $c$, которая в исходной статье \cite{Shamir_2017}, из которой эта константа была взята, так и не была посчитана. Получаем оценку дисперсии (второго момента) для одноточечного случая $l_1$-рандомизации. Показываем, что $l_1$-рандомизация с одноточеным оракулом также превосходит $l_2$-рандомизацию с точностью до $\ln d$, как и с двухточечным оракулом.
    
    \item Получаем оптимальные верхние оценки скорости сходимости пробатченных алгоритмов первого порядка Minibatch SMP и Single-Machine SMP для решения задач оптимизации с седловой точкой в архитектуре федеративного обучения.
    
    \item 
    Описываем технику генерирования безградиентных алгоритмов (решения
    седловых задач и задач выпуклой оптимизации), оптимальных по количеству коммуникационных раундов $N$, максимальному значению допустимого шума $\Delta$ и оракульной сложности $T$. Показываем, что одноточечные и двухточечные алгоритмы в архитектуре федеративного обучения, использующие $l_1$-рандомизацию, работают лучше алгоритмов, использующих $l_2$-рандомизацию, с точностью до логарифма в $l_1$-норме. Сравниваем одноточечные алгоритмы с двухточеными и показываем, что для решения с $\varepsilon$ точностью (по функции) одноточечные алгоритмы требуют в $O \left( d/\varepsilon^2 \right)$ больше обращений к оракулу, чем двухточечные. Анализируем работу алгоритма Minibatch Accelerated SGD, используя разные схемы сглаживания, на практическом эксперименте.
\end{itemize}
Структура статьи следующая: в разд.~\ref{section:Smooth_schemes}~и~\ref{section:Federated_Learning} представлено краткое введение в техники сглаживания и федеративное обучение соответственно. В п.~\ref{subsection:Gradient_Free_Algorithms} приводится главный результат работы. В разд.~\ref{section:Elements_for_proof} приведены основные идеи  доказательств теорем~\ref{theorem_1},\ref{theorem_2}. Численные эксперименты представлены в разд.~\ref{section:Numerical_result}.


 \section{Схемы сглаживания}\label{section:Smooth_schemes}
 
Схема сглаживания позволяет создавать безградиентные методы решения негладких задач, модифицируя одноименные алгоритмы первого порядка, предназначенные для решения гладких задач. Впервые схема сглаживания была описана в книге \cite{Nemirovski_1979}, где представлена идея решения задач методами первого порядка, используя вместо стохастического оракула первого порядка безградиентный стохастический оракул, который получен через теорему Стокса. С тех пор уже придумали различные техники сглаживания, однако основная идея восходит от \cite{Nemirovski_1979}. В данном разделе представим параллельно две схемы сглаживания: с $l_1$-рандомизацией \cite{Tsybakov_2022,Gasnikov_2016} и с $l_2$-рандомизацией \cite{Gasnikov_2022_Smooth_Scheme}, включающую стохастическую оптимизацию и смещенную оценку безградиентного оракула. Для этого рассматривается стохастическая негладкая выпуклая задача оптимизации 
\begin{equation}
    \label{Convex_opt_problem}
    \min_{x \in Q \subseteq \mathbb{R}^{d}} \left\{ f(x) := \mathbb{E}_\xi \left[ f(x,\xi) \right] \right\},
\end{equation}
где $Q$~--- выпуклое и компактное множество и $f(x, \xi)$~--- выпуклая функция на множестве $Q_\gamma := Q + B_p^d(\gamma)$. Здесь предполагается, что безградиентный оракул возвращает значение функции $f(x)$, возможно, с некоторым враждебным шумом $\delta(x)$: $\;f_\delta(x) := f(x) + \delta(x)$.

\subsection{Обозначения и предположения}\label{subsection:SS_1}
Обозначим через
$\dotprod{x}{y}:= \sum_{i=1}^d x_i y_i$ стандартное скалярное произведение $x,y \in \mathbb{R}^d$. Через $\| x \|_p := \left( \sum_{i=1}^d |x_i|^p \right)^{1/p}$ обозначаем $l_p$-норму ($p\ge 1$) в $\mathbb{R}^d$. $B_p^d(r):=\left\{ x \in \mathbb{R^d} : \| x \|_p \leq r \right\}$ и $S_p^d(r):=\left\{ x \in \mathbb{R^d} : \| x \|_p = r \right\}$ определяют $l_p$-шар и $l_p$-сферу соответственно. Объем $l_p$-шара определяется через $V(B_p^d (r)):= c_d r^d = \frac{\left( 2 \Gamma \left( \frac{1}{p} + 1 \right) \right)^d}{\Gamma \left( \frac{d}{p} + 1 \right)} r^d$, где $\Gamma(\cdot)$ обозначает гамма-функцию. Для обозначения <<расстояния>> между начальной точкой $x^0$ и решением исходной задачи $x_*$ вводим $R := \Tilde{O} \left( \| x^0 - x_* \|_p \right)$, где $\Tilde{O}(\cdot)$ является $O(\cdot)$ с точностью до логарифма $\sqrt{\ln d}$.

\begin{assumption}\label{assumption:SS_1}
    (липшицева непрерывная функция). Функция $f(x, \xi)$ является $M$-липшицевой непрерывной функцией в $l_p$-норме, то есть для всех $x,y \in Q$ имеем
    \begin{equation*}
        | f(y,\xi) - f(x, \xi) | \leq M(\xi) \| y - x \|_p.
    \end{equation*}
    Более того, существует положительная константа $M$, которая определяется следующим образом: $\mathbb{E} \left[ M^2(\xi) \right] \leq M^2$. В частности, для $p=2$ используем обозначение $M_2$ для константы Липшица.
\end{assumption}
\begin{assumption}\label{assumption:SS_2}
    (Ограниченность шума). Для всех $x \in Q$ выполняется $| \delta(x) | \leq \Delta $, где $\Delta$ -- уровень неточности (шума).
\end{assumption}
\begin{assumption}\label{assumption:SS_3}
    Для всех $x \in Q$ выполняется $\mathbb{E}_{\xi} \left[ | f(x,\xi) |^2 \right] \leq G^2$.
\end{assumption}


\subsection{Гладкая аппроксимация}\label{subsection:SS_2}

Поскольку задача \eqref{Convex_opt_problem} является негладкой, то вводим следующую гладкую аппроксимацию негладкой функции
\begin{equation}
    \label{Smooth_Convex_opt_problem}
    f_\gamma(x) := \mathbb{E}_{\Tilde{e}} \left[ f(x + \gamma \Tilde{e})\right],
\end{equation}
где $\gamma > 0$, $ \Tilde{e}$~--- случайный вектор, равномерно распределенный на $B_p^d(1)$ (далее ограничимся рассмотрением случаев $p=1$ и $p=2$). Здесь $f(x):= \mathbb{E} f(x, \xi)$. 

Следующая лемма представляет совокупность свойств аппроксимации функции $f$ в зависимости от распределения вектора $ \Tilde{e}$. Основываясь на \cite{Gasnikov_2022_Smooth_Scheme, Tsybakov_2022, Duchi_2015} и дополняя найденной константой Липшица градиента $L_{f_\gamma}$ в случае $p = 1$, выпишем свойства функции $f_\gamma$. 

\setcounter{table}{100}

\begin{lemma}
\label{properties_f_gamma}
    Для всех $x,y \in Q$ c предположением \ref{assumption:SS_1} справедливо 
    
        \begin{longtable}{ | c | c | }
            \hline
            $\Tilde{e} \in RB_1^d(1)$ & $\Tilde{e} \in RB_2^d(1)$  \\ [1ex] 
            \hline
            \hline
            \endhead
            & \\
            $
                f(x) \leq f_\gamma(x) \leq f(x) + \frac{2}{\sqrt{d}} \gamma M_2
            $ &
            $
                 f(x) \leq f_\gamma(x) \leq f(x) + \gamma M_2
            $ \\ [2ex]
            \hline
            \hline
            \pagebreak
            \multicolumn{2}{| l |}{$f_\gamma$~--- $M$-липшицева:}\\ [1ex]
            \hline
            & \\
            $
                | f_\gamma(y) - f_\gamma(x) | \leq M \| y - x \|_{\al{p}}
            $ & 
            $
                | f_\gamma(y) - f_\gamma(x) | \leq M \| y - x \|_p
            $ \\ [2ex]
            \hline
            \hline
            \multicolumn{2}{| l |}{$f_\gamma$ имеет $L_{f_{\gamma}}$-липшицевый градиент:}\\ [1ex]
            \hline
            & \\
            $
                \| \nabla f_\gamma(y) - \nabla f_\gamma(x) \|_{\al{q}} \leq \underbrace{\frac{d M}{\al{2} \gamma}}_{L_{f_{\gamma}}} \| y - x \|_{\al{p}}
            $ 
             & 
            $
                \| \nabla f_\gamma(y) - \nabla f_\gamma(x) \|_q \leq \underbrace{\frac{\sqrt{d} M}{\gamma}}_{L_{f_{\gamma}}} \| y - x \|_p 
            $ \\ [2ex]
            \hline
            \hline
        \end{longtable}
где $q$ такой, что $1/p + 1/q = 1$.
\end{lemma}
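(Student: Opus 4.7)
План состоит в параллельном выводе всех трёх свойств для обеих схем рандомизации, с опорой на подходы из \cite{Gasnikov_2022_Smooth_Scheme,Tsybakov_2022,Duchi_2015}; новым существенным элементом будет явный вывод константы Липшица градиента $L_{f_\gamma}$ для $p=1$.

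Сначала для оценки смещения $f_\gamma - f$ нижняя граница $f(x) \leq f_\gamma(x)$ получается из неравенства Йенсена (поскольку $\mathbb{E}[\Tilde{e}]=0$ по симметрии шара), а верхняя~--- из $l_2$-липшицевости:
\begin{equation*}
f_\gamma(x) - f(x) \leq \mathbb{E}|f(x+\gamma \Tilde{e}) - f(x)| \leq M_2 \gamma\, \mathbb{E}\|\Tilde{e}\|_2.
\end{equation*}
Для $p=2$ тривиально $\|\Tilde{e}\|_2 \leq 1$, а для $p=1$ понадобится оценка $\mathbb{E}\|\Tilde{e}\|_2 \leq 2/\sqrt{d}$, получаемая через неравенство Коши--Буняковского $\mathbb{E}\|\Tilde{e}\|_2 \leq \sqrt{\mathbb{E}\|\Tilde{e}\|_2^2}$ и прямое вычисление второго момента равномерного распределения на $B_1^d(1)$. Липшицевость самой $f_\gamma$ выводится стандартно через неравенство треугольника: $|f_\gamma(y) - f_\gamma(x)| \leq \mathbb{E} |f(y+\gamma\Tilde{e}) - f(x+\gamma\Tilde{e})| \leq M \|y-x\|_p$ по предположению \ref{assumption:SS_1}.

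Основной шаг~--- липшицевость градиента. Ключевым инструментом является формула Стокса (интегрирование по частям), дающая
\begin{equation*}
\nabla f_\gamma(x) = \frac{1}{V(B_p^d(\gamma))} \int_{S_p^d(\gamma)} f(x+u)\, \nu(u)\, dS(u),
\end{equation*}
где $\nu(u)$~--- внешняя единичная нормаль. Для $p=2$ нормаль $\nu(u) = u/\gamma$ приводит к стандартному представлению $\nabla f_\gamma(x) = (d/\gamma)\mathbb{E}_{e \sim S_2^d(1)}[f(x+\gamma e)\, e]$. Для $p=1$ грани шара плоские, на каждой из $2^d$ граней $\nu(u) = \mathrm{sign}(u)/\sqrt{d}$, и после вычисления отношения площади к объёму получаем
\begin{equation*}
\nabla f_\gamma(x) = \frac{d}{2\gamma}\, \mathbb{E}_{u \sim S_1^d(\gamma)}[f(x+u)\,\mathrm{sign}(u)].
\end{equation*}

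Далее оцениваем $\|\nabla f_\gamma(y) - \nabla f_\gamma(x)\|_q$, внося разность под математическое ожидание. Для $p=1$ ($q=\infty$) достаточно $\|\mathrm{sign}(u)\|_\infty = 1$ и $l_1$-липшицевости $f$, что непосредственно даёт $L_{f_\gamma} = dM/(2\gamma)$. Для $p=2$ ($q=2$) нужен более тонкий шаг: по двойственности задача сводится к оценке $\sup_{\|v\|_2 = 1} \mathbb{E}|\dotprod{e}{v}|$ для $e$, равномерного на $S_2^d(1)$; по симметрии сферы эта величина не зависит от $v$ и оценивается как $\mathbb{E}|e_1| \leq \sqrt{\mathbb{E}[e_1^2]} = 1/\sqrt{d}$, что приводит к улучшенной константе $\sqrt{d}M/\gamma$ (вместо грубой $dM/\gamma$). Главной трудностью ожидается именно аккуратная работа с формулой Стокса на кусочно-линейной границе $l_1$-шара (суммирование по $2^d$ граням) с корректным учётом нормировочных множителей $V(B_1^d(\gamma))$ и площади $S_1^d(\gamma)$~--- это нестандартное вычисление и составляет основной новый вклад леммы.
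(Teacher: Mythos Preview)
Your approach via the Stokes/surface-integral representation is genuinely different from the paper's. The paper never uses the surface formula for the Lipschitz-gradient part; instead it writes $\nabla f_\gamma$ as the convolution $\int \nabla f(z)\mu(z-x)\,dz$ with the uniform density $\mu$ on $B_1^d(\gamma)$ and reduces everything to bounding the total variation $\int |\mu(z-y)-\mu(z-x)|\,dz$. This integral is then estimated by a purely geometric cap-volume argument on the $l_1$-ball (splitting into $\|y-x\|_1\le 2\gamma$ and $>2\gamma$), following \cite{Yousefian_2012,Duchi_2012}. Your $l_2$ argument via duality and $\sup_{\|v\|_2=1}\mathbb{E}|\langle e,v\rangle|\le 1/\sqrt{d}$ is correct and pleasantly elementary compared with the standard volume proofs.

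For the $l_1$ column, however, there is a concrete slip. Your Stokes coefficient is wrong: carrying out the area/volume ratio for $B_1^d(\gamma)$ gives
\[
\nabla f_\gamma(x)=\frac{d}{\gamma}\,\mathbb{E}_{u\sim S_1^d(\gamma)}\bigl[f(x+u)\,\mathrm{sign}(u)\bigr],
\]
not $d/(2\gamma)$ (this is exactly the formula in Лемма~\ref{lemma_1_proof_noise_l1_randomization}). With the correct prefactor your direct bound $\|\cdot\|_\infty\le \frac{d}{\gamma}\mathbb{E}|f(y+u)-f(x+u)|$ yields $L_{f_\gamma}=dM/\gamma$, a factor~$2$ larger than the $dM/(2\gamma)$ stated in the lemma. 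Rewriting in symmetric two-point form does not help: the two Lipschitz increments then add back the lost~$2$. The paper's total-variation/cap-volume computation is precisely what recovers this extra factor. Also note that the lemma is stated for general $p\in[1,2]$, whereas you treat only $p=1$ for the $l_1$ column and $p=2$ for the $l_2$ column; the paper's volume argument handles general $p$ in one stroke (via $\|y-x\|_1\le d^{1-1/p}\|y-x\|_p$ and setting $r=\|y-x\|_p/4$). None of this affects the $O(\cdot)$ downstream results, but as written your plan does not deliver the exact constant the lemma claims for the $l_1$ case.
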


Доказательство приведено в п.~\ref{subsection:5_1}.


\subsection{Рандомизация с двухточечной обратной связью}\label{subsection:SS_3}
Аппроксимация градиента зашумленной функции $f_\gamma(x, \xi)$ из  \eqref{Smooth_Convex_opt_problem} может быть получена через две точки, близких к $x$. Для этого определим случайный вектор $e$, равномерно распределенный на $S_p^d(1)$. Тогда градиент может быть оценен следующей аппроксимацией.
\begin{itemize}
    \item Оценка градиента для $l_1$-рандомизации ($e \in RS_1^d(1)$) \cite{Gasnikov_2016} (см. также \cite{Tsybakov_2022}):
    \begin{equation}
    \label{grad_l1_randomization}
        \nabla f_\gamma (x,\xi,e) = \frac{d}{2\gamma}(f_\delta(x+\gamma e,\xi) - f_\delta(x-\gamma e,\xi)) \text{sign}(e).
    \end{equation}

    \item Оценка градиента для $l_2$-рандомизации  ($e \in RS_2^d(1)$) \cite{Shamir_2017}:
    \begin{equation}
    \label{grad_l2_randomization}
        \nabla f_\gamma (x,\xi,e) = \frac{d}{2\gamma}(f_\delta(x+\gamma e,\xi) - f_\delta(x-\gamma e,\xi)) e.
    \end{equation}
\end{itemize}
Для оценки градиента в \eqref{grad_l1_randomization} и \eqref{grad_l2_randomization} выбрана центральная конечно-разностная  схема рандомизации, так как в работе \cite{Scheinberg_2022} поясняется, что в гладком случае выгоднее оценивать градиент именно центральной конечно-разностной схемой (CFD), а не прямой конечно-разностной схемой (FFD). Отметим, что при $\Delta = 0$ оценки будут несмещенными, т.е. $\mathds{E}_{e,\xi} \left[ \nabla f_\gamma (x, \xi, e)  \right] = \nabla f_\gamma (x)$. 

Далее приведем свойства $\nabla f_\gamma (x,\xi,e)$ для двух рандомизаций, используя известные результаты из \cite{Gasnikov_2022_Smooth_Scheme, Shamir_2017, Tsybakov_2022, Gasnikov_2016, Beznosikov_2020, Ledoux_2005}. Во многих работах для $l_2$-рандомизации оценка второго момента записывается с точностью до константы $c$, поэтому в лемме \ref{properties_grad_f_gamma} приводятся оценки второго момента для $l_1$ и $l_2$-рандомизаций с уточнением константы $c$.
\begin{lemma}
    \label{properties_grad_f_gamma}
    Для всех $x \in Q$ с предположениями \ref{assumption:SS_1} и \ref{assumption:SS_2} имеем
    \begin{itemize}
        \item[i)] $\nabla f_\gamma (x,\xi,e)$ с $l_1$-рандомизации (\ref{grad_l1_randomization}) имеет оценку дисперсии (второй момент):
        \begin{equation*}
            \mathds{E}_e \left[ \| \nabla f_\gamma (x, \xi, e) \|^2_q \right] \leq \kappa(p,d) \left( M_2^2 + \frac{d^2 \Delta^2}{12(1+\sqrt{2})^2 \gamma^2} \right),
        \end{equation*}
        где $1/p + 1/q = 1$ и 
        \begin{equation*}
            \kappa(p,d)  = 48(1+\sqrt{2})^2 d^{2 - \frac{2}{p}}.
        \end{equation*}
        Если $\Delta$ достаточно мала, то 
        \begin{equation}
        \label{sigma_for_l1_randomization}
            \mathds{E}_e \left[ \| \nabla f_\gamma (x, \xi, e) \|^2_q \right] \leq 2 \kappa(p,d) M_2^2.
        \end{equation}
        
        \item[ii)] $\nabla f_\gamma (x,\xi,e)$ с $l_2$-рандомизации (\ref{grad_l2_randomization}) имеет оценку дисперсии (второй момент):
        \begin{equation*}
            \mathds{E}_e \left[ \| \nabla f_\gamma (x, \xi, e) \|^2_q \right] \leq \kappa(p,d) \left(  M_2^2 + \frac{d^2 \Delta^2}{\sqrt{2} \gamma^2}  \right), 
        \end{equation*}
        где $1/p + 1/q = 1$ и 
        \begin{equation*}
            \kappa(p,d) = \sqrt{2} \min \left\{ q, \ln d \right\} d^{2-\frac{2}{p}}.
        \end{equation*}
        Если $\Delta$ достаточно мала, то 
        \begin{equation}
        \label{sigma_for_l2_randomization}
            \mathds{E}_e \left[ \| \nabla f_\gamma (x, \xi, e) \|^2_q \right] \leq 2 \kappa(p,d) M_2^2. 
        \end{equation}
        
    \end{itemize}
\end{lemma}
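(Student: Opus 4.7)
The proof decomposes the estimator into a signal part and a noise part, bounds each separately in the $l_q$-norm, and then computes the relevant moments of $e$ on the appropriate unit sphere.

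\textit{Step 1 (signal/noise split).} Write
\[
\nabla f_\gamma(x,\xi,e) = \frac{d}{2\gamma}\bigl(\Delta_f + \Delta_\delta\bigr)\,v,
\]
where $v=\mathrm{sign}(e)$ in the $l_1$-case (i) and $v=e$ in the $l_2$-case (ii), with $\Delta_f := f(x+\gamma e,\xi)-f(x-\gamma e,\xi)$ and $\Delta_\delta := \delta(x+\gamma e)-\delta(x-\gamma e)$. Applying $(a+b)^2\leq 2(a^2+b^2)$ under the $l_q$-norm gives $\|\nabla f_\gamma(x,\xi,e)\|_q^2 \leq \tfrac{d^2}{2\gamma^2}(\Delta_f^2+\Delta_\delta^2)\|v\|_q^2$, which reduces the problem to bounding $\mathbb{E}_e[\Delta_f^2\|v\|_q^2]$ and $\mathbb{E}_e[\Delta_\delta^2\|v\|_q^2]$ separately.

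\textit{Step 2 (noise term).} By Assumption~\ref{assumption:SS_2}, $|\Delta_\delta|\leq 2\Delta$ pointwise, so the noise contribution is at most $\tfrac{2d^2\Delta^2}{\gamma^2}\mathbb{E}_e[\|v\|_q^2]$. For~(i), $\|\mathrm{sign}(e)\|_q = d^{1/q}$ is deterministic, yielding $\|v\|_q^2 = d^{2/q} = d^{2-2/p}$. For~(ii), $v=e \in S_2^d(1)$; using the Gaussian representation $e = g/\|g\|_2$ with $g\sim\mathcal{N}(0,I_d)$ together with standard sphere-concentration estimates~\cite{Ledoux_2005}, one obtains $\mathbb{E}_e[\|e\|_q^2] \leq \sqrt 2\,\min\{q,\ln d\}\,d^{2/q-1}$. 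This computation makes explicit the constant $c=\sqrt 2\min\{q,\ln d\}$ that was left unspecified in~\cite{Shamir_2017}.

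\textit{Step 3 (signal term).} The principal technical task is bounding $\mathbb{E}_e[\Delta_f^2\,\|v\|_q^2]$, and the naive Lipschitz bound $|\Delta_f|\leq 2\gamma M_2(\xi)\|e\|_2$ is too loose; one must exploit that $\Delta_f$ is odd in~$e$. For~(ii) I would apply the Cauchy--Schwarz-in-integral-form bound $\Delta_f^2 \leq 2\gamma\int_{-\gamma}^{\gamma}\langle\nabla f(x+te,\xi),e\rangle^2\,dt$ (valid almost everywhere by convex Lipschitzness), swap expectation and integral via Fubini, and exploit the spherical identity $\mathbb{E}_e[\langle g,e\rangle^2]=\|g\|_2^2/d$ together with a correlation bound against~$\|e\|_q^2$. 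For~(i) I would invoke the exponential--Rademacher representation $e_i=\epsilon_i U_i/\!\sum_j U_j$ with $U_i\sim\mathrm{Exp}(1)$, $\epsilon_i\sim\mathrm{Rad}$ i.i.d.; conditioning on~$U$ and applying Khintchine's inequality in~$\epsilon$ reduces the bound to Dirichlet/Beta moments, and the identity $\mathbb{E}[(U_i/\!\sum_j U_j)^2]=2/(d(d+1))$ produces the asserted $d^{2-2/p}$ dependence. Then take $\mathbb{E}_\xi$ via $\mathbb{E}_\xi[M_2^2(\xi)]\leq M_2^2$ (Assumption~\ref{assumption:SS_1}) and collect constants into $\kappa(p,d)$. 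The ``small~$\Delta$'' conclusions~\eqref{sigma_for_l1_randomization}--\eqref{sigma_for_l2_randomization} follow by requiring the noise contribution to be no larger than the signal contribution, at most doubling the signal-only bound.

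\textit{Main obstacle.} The crux is Step~3: the pure Lipschitz bound is off by a factor of~$d$ from the claim, so in each case one must refine using the oddness of~$\Delta_f$ in~$e$ and either Ledoux-style concentration on~$S_2^{d-1}$ (case~(ii)) or the Khintchine--Rademacher representation (case~(i)). Making the constant~$c$ in the $l_2$-bound explicit---one of the paper's declared contributions---adds a secondary layer of bookkeeping, but it reduces to careful manipulation of $\mathbb{E}_e[\|e\|_q^2]$ on the Euclidean sphere.
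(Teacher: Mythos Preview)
Your signal/noise split (Step~1) and the noise bound (Step~2) match the paper's argument. The gap is in Step~3, and it is not cosmetic.

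For case~(ii), the integral representation $\Delta_f = \int_{-\gamma}^{\gamma}\langle\nabla f(x+te,\xi),e\rangle\,dt$ followed by the spherical identity $\mathbb{E}_e[\langle g,e\rangle^2]=\|g\|_2^2/d$ does not work as written, because $g=\nabla f(x+te,\xi)$ depends on~$e$ for every $t\neq 0$. Once that independence fails, the identity no longer applies and you are back to the crude Lipschitz bound $|\langle\nabla f(x+te,\xi),e\rangle|\le M_2(\xi)$, which loses exactly the factor~$d$ you need. The paper avoids this entirely: it centers by $\alpha=\mathbb{E}_e[f(x+\gamma e,\xi)]$, uses symmetry to reduce to $\tfrac{2d^2}{\gamma^2}\mathbb{E}_{\xi,e}\bigl[\|e\|_q^2\,(f(x+\gamma e,\xi)-\alpha)^2\bigr]$, then applies Cauchy--Schwarz to separate $\sqrt{\mathbb{E}_e\|e\|_q^4}$ from $\sqrt{\mathbb{E}_e(f-\alpha)^4}$, and finally bounds the latter by L\'evy concentration on~$S_2^{d-1}$. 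The explicit constant the paper advertises is $c=1/\sqrt{2}$ in the concentration estimate $\sqrt{\mathbb{E}_e[(f(e)-\mathbb{E}f(e))^4]}\le cM_2^2/d$ (their Lemma~4, sharpening Shamir's Lemma~9), not in the moment $\mathbb{E}_e\|e\|_q^2$ as you state; your attribution of where the ``missing'' constant lives is off. You do mention ``Ledoux-style concentration'' in your obstacle paragraph, so you are aware the right tool exists, but your Step~3 plan does not actually use it.

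For case~(i), Khintchine's inequality controls moments of \emph{linear} functions $\sum_i a_i\epsilon_i$ of Rademacher variables. Conditioning on~$U$, the map $\epsilon\mapsto\Delta_f\bigl(\epsilon\odot U/\|U\|_1\bigr)$ is odd but in general highly nonlinear in~$\epsilon$, so Khintchine does not apply directly; you would need an additional argument (a Poincar\'e-type inequality on the $l_1$-sphere, or the specific weighted-Poincar\'e machinery used in~\cite{Tsybakov_2022}, which is what the paper defers to for this case). As written, your reduction ``to Dirichlet/Beta moments'' is not justified.
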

Доказательство приведено в п.~\ref{proof_properties_grad_f_gamma}.


\subsection{Рандомизация с одноточечной обратной связью}\label{subsection:SS_4}

Для случая когда двухточечная обратная связь недоступна, схемы сглаживания могут использовать одноточечную обратную связь через следующую несмещенную оценку.
\begin{itemize}
    \item Оценка градиента для $l_1$-рандомизации ($e \in RS_1^d(1)$):
    \begin{equation}
    \label{grad_l1_randomization_one_point}
        \nabla f_\gamma (x,\xi,e) = \frac{d}{\gamma}(f_\delta(x+\gamma e,\xi)) \text{sign}(e).
    \end{equation}

    \item Оценка градиента для $l_2$-рандомизации  ($e \in RS_2^d(1)$) \cite{Gasnikov_2022_Smooth_Scheme}:
    \begin{equation}
    \label{grad_l2_randomization_one_point}
        \nabla f_\gamma (x,\xi,e) = \frac{d}{\gamma}(f_\delta(x+\gamma e,\xi)) e.
    \end{equation}
\end{itemize}

Тогда свойства $\nabla f_\gamma (x,\xi,e)$ для двух рандомизаций будут иметь следующий вид.
\begin{lemma}
    \label{properties_grad_f_gamma_one_point}
    Для всех $x \in Q$ с предположениями \ref{assumption:SS_2} и \ref{assumption:SS_3} имеем
    \begin{itemize}
        \item[i)] $\nabla f_\gamma (x,\xi,e)$ с $l_1$-рандомизации (\ref{grad_l1_randomization_one_point}) имеет оценку дисперсии (второй момент):
        \begin{equation*}
            \mathds{E}_e \left[ \| \nabla f_\gamma (x, \xi, e) \|^2_q \right] \leq \kappa(p,d) \left( \frac{G^2}{\gamma^2} + \frac{\Delta^2}{\gamma^2} \right),
        \end{equation*}
        где $1/p + 1/q = 1$ и 
        \begin{equation*}
            \kappa(p,d) = d^{4-\frac{2}{p}}.
        \end{equation*}
        Если $\Delta$ достаточно мала, то 
        \begin{equation}
        \label{sigma_for_l1_randomization_one_point}
            \mathds{E}_e \left[ \| \nabla f_\gamma (x, \xi, e) \|^2_q \right] \leq 2 \kappa(p,d) \frac{G^2}{\gamma^2}.
        \end{equation}
        
        \item[ii)] $\nabla f_\gamma (x,\xi,e)$ с $l_2$-рандомизации (\ref{grad_l2_randomization_one_point}) имеет оценку дисперсии (второй момент):
        \begin{equation*}
            \mathds{E}_e \left[ \| \nabla f_\gamma (x, \xi, e) \|^2_q \right] \leq \kappa(p,d) \left(  \frac{G^2}{\gamma^2} + \frac{\Delta^2}{\gamma^2}  \right), 
        \end{equation*}
        где $1/p + 1/q = 1$ и 
        \begin{equation*}
            \kappa(p,d) = \min \left\{ q, \ln d \right\} d^{3-\frac{2}{p}}.
        \end{equation*}
        Если $\Delta$ достаточно мала, то 
        \begin{equation}
        \label{sigma_for_l2_randomization_one_point}
            \mathds{E}_e \left[ \| \nabla f_\gamma (x, \xi, e) \|^2_q \right] \leq 2 \kappa(p,d)  \frac{G^2}{\gamma^2}. 
        \end{equation}
        
    \end{itemize}
\end{lemma}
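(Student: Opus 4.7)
План доказательства: Для обеих рандомизаций я бы провёл прямое вычисление, опираясь на Предположения \ref{assumption:SS_2} и \ref{assumption:SS_3}. Сначала представлю квадрат $l_q$-нормы одноточечной оценки в виде
\begin{equation*}
\|\nabla f_\gamma(x,\xi,e)\|_q^2 = \frac{d^2}{\gamma^2}\, f_\delta^2(x+\gamma e,\xi)\, \|u\|_q^2,
\end{equation*}
где $u = \text{sign}(e)$ для $l_1$-рандомизации и $u = e$ для $l_2$-рандомизации. Поскольку $e$ и $\xi$ независимы, математическое ожидание распадается на два --- сначала по $\xi$ (при фиксированном $e$), затем по $e$.

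Оценку $\mathbb{E}_\xi[f_\delta^2(x+\gamma e,\xi)]$ получаю из неравенства $(a+b)^2 \leq 2a^2 + 2b^2$, Предположения \ref{assumption:SS_3} и условия $|\delta(\cdot)|\leq\Delta$:
\begin{equation*}
\mathbb{E}_\xi\bigl[f_\delta^2(x+\gamma e,\xi)\bigr] \leq 2 G^2 + 2 \Delta^2,
\end{equation*}
равномерно по $e$. В $l_1$-случае пользуюсь тем, что равномерно распределённая на $S_1^d(1)$ величина $e$ почти наверное имеет все ненулевые координаты, поэтому $\text{sign}(e)\in\{-1,+1\}^d$ и $\|\text{sign}(e)\|_q^q = d$, откуда $\|\text{sign}(e)\|_q^2 = d^{2/q} = d^{2-2/p}$. Перемножая три множителя, получаю заявленную оценку с $\kappa(p,d) = d^{4-2/p}$.

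В $l_2$-случае ключевой шаг --- классическое моментное неравенство
\begin{equation*}
\mathbb{E}_e\bigl[\|e\|_q^2\bigr] \leq C \cdot \min\{q,\ln d\}\cdot d^{2/q-1} = C \cdot \min\{q,\ln d\}\cdot d^{1-2/p},
\end{equation*}
для $e\sim\mathrm{Unif}(S_2^d(1))$. Его стандартное доказательство использует представление $e = g/\|g\|_2$ при $g\sim\mathcal{N}(0,I_d)$, моментные оценки нормы $\|g\|_q$ (для больших $q$ они дают $\sqrt{q}\,d^{1/q}$, а в зоне $q\gtrsim\ln d$ размерностный фактор стабилизируется до $\sqrt{\ln d}$) и концентрацию $\|g\|_2^2$ около $d$ --- см. \cite{Ledoux_2005, Beznosikov_2020}. Это немедленно даёт $\kappa(p,d) = \min\{q,\ln d\}\, d^{3-2/p}$. Наконец, оценки \eqref{sigma_for_l1_randomization_one_point} и \eqref{sigma_for_l2_randomization_one_point} с префактором $2$ получаются из условия малости $\Delta \leq G$, при котором $G^2+\Delta^2 \leq 2G^2$.

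Главное препятствие --- аккуратный вывод моментного неравенства для $\|e\|_q$ на евклидовой сфере: появление множителя $\min\{q,\ln d\}$ требует либо прямой ссылки на \cite{Ledoux_2005}, либо отдельного короткого вывода через концентрацию меры. Остальные шаги --- прямолинейные вычисления, а аналогичные оценки для двухточечной схемы уже установлены в Лемме \ref{properties_grad_f_gamma}, так что её доказательство можно использовать как шаблон для настоящего.
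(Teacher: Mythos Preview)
Your plan coincides with the paper's proof: factor the squared norm as $\frac{d^2}{\gamma^2}\,f_\delta^2\,\|u\|_q^2$, use $\|\text{sign}(e)\|_q^2=d^{2-2/p}$ deterministically in the $l_1$ case and the standard moment bound $\mathbb{E}\|e\|_q^2\lesssim\min\{q,\ln d\}\,d^{1-2/p}$ in the $l_2$ case (the paper defers the $l_2$ case to \cite{Gasnikov_2022_Smooth_Scheme, Gasnikov_2017}), then bound $\mathbb{E}[f_\delta^2]$ via Assumptions~\ref{assumption:SS_2} and~\ref{assumption:SS_3}. The only cosmetic difference is that your $(a+b)^2\le 2a^2+2b^2$ step introduces an extra factor of~$2$ relative to the paper's directly written $\mathbb{E}[(f+\delta)^2]\le G^2+\Delta^2$; this does not affect any of the subsequent $O(\cdot)$ estimates.
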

Доказательство приведено в п.~\ref{proof_properties_grad_f_gamma_one_point}.


\subsection{Алгоритм сглаживания}\label{subsection:SS_5}

Основываясь на выше представленных элементах, опишем общий подход, именуемый, как схема сглаживания. Предположим, что у нас есть некоторый ускоренный пакетный алгоритм \textbf{A}$(L, \sigma^2)$ архитектуры федеративного обучения, который решает задачу \eqref{Convex_opt_problem} с предположением, что $f$ является гладкой и удовлетворяет
\begin{equation*}
    \| \nabla f(y) - \nabla f(x) \|_q \leq L \| y - x \|_p, \; \forall x,y \in Q_\gamma
\end{equation*}
и с помощью стохастического оракула первого порядка, который зависит от случайной величины $\eta$ и возвращает в точке $x$ смещенный стохастический градиент $\nabla_x f_\gamma (x, \eta)$
\begin{equation*}
    \mathbb{E}_{\eta} \left[ \| \nabla_x f_\gamma (x, \eta) - \nabla f(x) \|_q^2 \right] \leq \sigma^2.
\end{equation*}
Тогда общий подход схемы сглаживания состоит в применении \textbf{A}$(L, \sigma^2)$ к сглаженной задаче
\begin{equation}
    \label{Convex_opt_problem_FL}
    \min_{x \in Q \subseteq \mathbb{R}^{d}} f_\gamma(x)
\end{equation}
для решения с $\varepsilon/2$ точностью c известными параметрами $\eta = e$, $\nabla_x f_\gamma (x, \eta) = \nabla f_\gamma (x, \xi, e)$, $L = L_{f_\gamma}$ и $\gamma$, представленная в следствии \ref{gamma_parameter_clear}.

\begin{corollary}\label{gamma_parameter_clear}
    Согласно лемме \ref{properties_f_gamma}, чтобы получить $\varepsilon$-точность решения задачи \eqref{Convex_opt_problem}, необходимо решить задачу \eqref{Convex_opt_problem_FL} с $\varepsilon / 2$-точностью со следующим параметром
    \begin{longtable}{ | c | c | }
            \hline
            Схема сглаживания с $l_1$-рандомизацией & Схема сглаживания с $l_2$-рандомизацией  \\ [1ex] 
            \hline
            \hline
            \endhead
            & \\
            $
                \gamma = \frac{\sqrt{d} \varepsilon}{4 M_2}
            $ &
            $
               \gamma = \frac{\varepsilon}{2 M_2}
            $ \\ [2ex]
            \hline
            \hline
        \end{longtable}
\end{corollary}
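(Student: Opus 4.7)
The proof reduces to a direct application of the sandwich inequalities in Lemma~\ref{properties_f_gamma}, together with the observation that if $x$ is an approximate minimizer of the smoothed problem, then its optimality gap with respect to $f$ differs from its gap with respect to $f_\gamma$ only by the pointwise approximation error of the smoothing. So the plan is to express the target accuracy $\varepsilon$ on $f$ as the sum of an $\varepsilon/2$ term coming from the quality of the approximate minimizer of $f_\gamma$ and an $\varepsilon/2$ term coming from the bias of the smoothing, and then solve for $\gamma$ in each of the two randomization schemes.

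First I would fix $\varepsilon>0$, let $x_{*}\in\Argmin_{x\in Q} f(x)$ and let $\tilde{x}_{*}\in\Argmin_{x\in Q} f_\gamma(x)$; then I would take any point $\hat{x}$ that solves \eqref{Convex_opt_problem_FL} to accuracy $\varepsilon/2$, i.e.\ $f_\gamma(\hat{x})-f_\gamma(\tilde{x}_{*})\le \varepsilon/2$. Using the left inequality of Lemma~\ref{properties_f_gamma} ($f\le f_\gamma$) and the right inequality of Lemma~\ref{properties_f_gamma} applied at $x_{*}$, together with $f_\gamma(\tilde{x}_{*})\le f_\gamma(x_{*})$, I would chain
\begin{equation*}
f(\hat{x})-f(x_{*})\;\le\;f_\gamma(\hat{x})-f_\gamma(\tilde{x}_{*})+\bigl(f_\gamma(x_{*})-f(x_{*})\bigr)\;\le\;\tfrac{\varepsilon}{2}+\mathrm{bias}(\gamma),
\end{equation*}
where $\mathrm{bias}(\gamma)$ is the upper bound on $f_\gamma-f$ supplied by Lemma~\ref{properties_f_gamma}.

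Next I would plug in the two concrete biases from the lemma and impose $\mathrm{bias}(\gamma)\le\varepsilon/2$. For the $l_2$-randomization, $\mathrm{bias}(\gamma)=\gamma M_2$, so $\gamma M_2\le \varepsilon/2$ gives $\gamma=\varepsilon/(2M_2)$. For the $l_1$-randomization, $\mathrm{bias}(\gamma)=\tfrac{2}{\sqrt{d}}\gamma M_2$, so $\tfrac{2}{\sqrt{d}}\gamma M_2\le \varepsilon/2$ gives $\gamma=\sqrt{d}\,\varepsilon/(4M_2)$. These are exactly the values stated in the table of the corollary.

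There is essentially no hard step: the entire argument is a two-line triangle inequality plus arithmetic. The only point worth being careful about is that the bias estimate of Lemma~\ref{properties_f_gamma} must be used at the minimizer $x_{*}\in Q$, which is legitimate because $f(x,\xi)$ is assumed convex on the enlargement $Q_\gamma=Q+B_p^d(\gamma)$, so the smoothing $f_\gamma(x_{*})=\mathbb{E}_{\tilde e}[f(x_{*}+\gamma\tilde e)]$ is well-defined and the lemma applies. No other subtlety arises.
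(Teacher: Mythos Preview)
Your proof is correct and follows essentially the same approach as the paper: both arguments chain the sandwich inequality $f\le f_\gamma\le f+\mathrm{bias}(\gamma)$ from Lemma~\ref{properties_f_gamma} with the optimality of the smoothed minimizer, then set $\mathrm{bias}(\gamma)=\varepsilon/2$ to read off the stated values of~$\gamma$. The paper does not give a separate proof of this corollary but reproduces exactly this chain inside the proofs of Theorems~\ref{theorem_1} and~\ref{theorem_2} in the appendix.
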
 где $\varepsilon > 0$ является желаемой точностью решения задачи \eqref{Convex_opt_problem} с точки зрения субоптимальности: $\mathds{E}[f(x^{N}) - f(x_{*})] \leq \varepsilon$.\\

\begin{corollary}\label{Const_Lipshitz_grad}
    Согласно лемме \ref{properties_f_gamma}, подставляя параметр $\gamma$ из следствия \ref{gamma_parameter_clear}, имеем
    \begin{longtable}{ | c | c | }
            \hline
            Схема сглаживания с $l_1$-рандомизацией & Схема сглаживания с $l_2$-рандомизацией  \\ [1ex] 
            \hline
            \hline
            \endhead
            & \\
            $
                L_{f_\gamma} = \frac{\al{2} \sqrt{d} M M_2}{\varepsilon}
            $ &
            $
               L_{f_\gamma} = \frac{2 \sqrt{d} M M_2}{\varepsilon}
            $ \\ [2ex]
            \hline
            \hline
    \end{longtable}
\end{corollary}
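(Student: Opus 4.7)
The plan is essentially a direct substitution: Corollary \ref{Const_Lipshitz_grad} follows from Lemma \ref{properties_f_gamma} together with the choice of $\gamma$ made in Corollary \ref{gamma_parameter_clear}, so no new estimate is needed. I would organize it in two parallel columns matching the table in the statement, one for each randomization scheme.

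First, I would recall from Lemma \ref{properties_f_gamma} the two Lipschitz constants of $\nabla f_\gamma$ as functions of the smoothing radius $\gamma$: namely $L_{f_\gamma}=\tfrac{dM}{2\gamma}$ in the $l_1$-case and $L_{f_\gamma}=\tfrac{\sqrt{d}M}{\gamma}$ in the $l_2$-case. Next I would recall from Corollary \ref{gamma_parameter_clear} the corresponding admissible smoothing radii chosen so that the bias bound from Lemma \ref{properties_f_gamma} does not exceed $\varepsilon/2$: $\gamma=\tfrac{\sqrt{d}\,\varepsilon}{4M_2}$ in the $l_1$-case and $\gamma=\tfrac{\varepsilon}{2M_2}$ in the $l_2$-case.

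Then I would perform the two one-line substitutions. For $l_1$-randomization:
\begin{equation*}
L_{f_\gamma}=\frac{dM}{2\gamma}=\frac{dM}{2}\cdot\frac{4M_2}{\sqrt{d}\,\varepsilon}=\frac{2\sqrt{d}\,MM_2}{\varepsilon}.
\end{equation*}
For $l_2$-randomization:
\begin{equation*}
L_{f_\gamma}=\frac{\sqrt{d}\,M}{\gamma}=\sqrt{d}\,M\cdot\frac{2M_2}{\varepsilon}=\frac{2\sqrt{d}\,MM_2}{\varepsilon}.
\end{equation*}
This reproduces both entries of the table in the statement and in particular shows that both smoothing schemes, once $\gamma$ is tuned to the target accuracy $\varepsilon$, lead to exactly the same smoothness constant of the auxiliary problem \eqref{Convex_opt_problem_FL}.

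There is no real obstacle here: the only thing worth pointing out is that the apparent factor-of-$\sqrt{d}$ advantage of the $l_2$-case in Lemma \ref{properties_f_gamma} is exactly compensated by the factor-of-$\sqrt{d}$ weaker bias estimate in that case, which forces the smaller radius $\gamma=\varepsilon/(2M_2)$ in Corollary \ref{gamma_parameter_clear}. I would add a brief remark to that effect so that the reader sees why the two randomizations give identical $L_{f_\gamma}$, and why all the subsequent logarithmic-in-$d$ gains of the $l_1$-scheme must therefore come from the variance bounds in Lemma \ref{properties_grad_f_gamma}, not from the smoothness constant.
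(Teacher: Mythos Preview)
Your proposal is correct and matches the paper's approach exactly: the corollary is stated in the paper without a separate proof, since it is an immediate substitution of the $\gamma$-values from Corollary~\ref{gamma_parameter_clear} into the expressions $L_{f_\gamma}=\tfrac{dM}{2\gamma}$ and $L_{f_\gamma}=\tfrac{\sqrt{d}M}{\gamma}$ of Lemma~\ref{properties_f_gamma}. Your added remark about the $\sqrt{d}$ cancellation (better smoothness constant versus worse bias bound in the $l_2$-case) is a helpful clarification that the paper does not make explicit.
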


\begin{corollary}\label{sigma_for_two_point}
    Согласно лемме \ref{properties_grad_f_gamma} уравнения \eqref{sigma_for_l1_randomization} и \eqref{sigma_for_l2_randomization} для двухточечного оракула примут вид
    \begin{longtable}{ | c | c | }
            \hline
            Схема сглаживания с $l_1$-рандомизацией & Схема сглаживания с $l_2$-рандомизацией  \\ [1ex] 
            \hline
            \hline
            \endhead
            & \\
            $
                \sigma^2 \leq 48(1+\sqrt{2})^2 d^{2 - \frac{2}{p}} M_2^2
            $ &
            $
               \sigma^2 \leq 2 \sqrt{2} \min \left\{ q, \ln d \right\} d^{2 - \frac{2}{p}} M_2^2
            $ \\ [2ex]
            \hline
            \hline
    \end{longtable}
    если $\Delta$ достаточно мала.
\end{corollary}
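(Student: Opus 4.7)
Данное следствие получается прямой подстановкой конкретных значений константы $\kappa(p,d)$ из леммы~\ref{properties_grad_f_gamma} в упрощённые оценки второго момента \eqref{sigma_for_l1_randomization} и \eqref{sigma_for_l2_randomization} с последующим отождествлением полученных выражений с параметром дисперсии $\sigma^2$ из определения ускоренного пакетного алгоритма \textbf{A}$(L,\sigma^2)$, введённого в п.~\ref{subsection:SS_5}. Первым шагом я планирую обосновать, что оценка второго момента действительно мажорирует требуемое значение $\sigma^2$: в отсутствие шума ($\Delta = 0$) двухточечные рандомизированные оценки \eqref{grad_l1_randomization} и \eqref{grad_l2_randomization} несмещены, то есть $\mathbb{E}_{e,\xi}[\nabla f_\gamma(x,\xi,e)] = \nabla f_\gamma(x)$, поэтому дисперсия стохастического градиента относительно $\nabla f_\gamma(x)$ не превосходит его второго момента по стандартному неравенству.

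Далее я подставлю соответствующие значения $\kappa(p,d)$ в \eqref{sigma_for_l1_randomization} и \eqref{sigma_for_l2_randomization}. Для $l_1$-рандомизации $\kappa(p,d) = 48(1+\sqrt{2})^2 d^{2-2/p}$, откуда с точностью до абсолютной константы $\sigma^2 \leq 48(1+\sqrt{2})^2 d^{2-2/p} M_2^2$. Для $l_2$-рандомизации $\kappa(p,d) = \sqrt{2}\min\{q,\ln d\} d^{2-2/p}$, что даёт $\sigma^2 \leq 2\sqrt{2}\min\{q,\ln d\} d^{2-2/p} M_2^2$. Сопоставление двух оценок сразу показывает, что $l_1$-рандомизация превосходит $l_2$-рандомизацию с точностью до логарифмического по $d$ множителя, что согласуется с заявленным во введении основным вкладом работы.

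Основное затруднение этого доказательства носит технический характер и состоит в аккуратной формализации условия <<достаточно малое $\Delta$>>. Возвращаясь к полным оценкам леммы~\ref{properties_grad_f_gamma}, шумовой член порядка $d^2 \Delta^2 / \gamma^2$ должен быть мажорирован главным членом порядка $M_2^2$; подставляя оптимальный выбор $\gamma$ из следствия~\ref{gamma_parameter_clear} ($\gamma = \sqrt{d}\varepsilon/(4M_2)$ в $l_1$-случае и $\gamma = \varepsilon/(2M_2)$ в $l_2$-случае), я получу явную границу на допустимый уровень шума $\Delta$ через параметры $\varepsilon$, $d$ и $M_2$. Никаких новых содержательных вычислений при этом не возникает~--- всё доказательство сводится к аккуратному комбинированию уже полученных в лемме~\ref{properties_grad_f_gamma} оценок.
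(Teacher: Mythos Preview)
Your proposal is correct and follows the same trivial route as the paper: the corollary has no separate proof there and is obtained by direct substitution of the explicit $\kappa(p,d)$ from Lemma~\ref{properties_grad_f_gamma} into \eqref{sigma_for_l1_randomization}--\eqref{sigma_for_l2_randomization}. Your additional remark on unbiasedness (justifying why the second-moment bound serves as~$\sigma^2$) and your explicit unpacking of the condition ``$\Delta$ достаточно мала'' via Corollary~\ref{gamma_parameter_clear} go slightly beyond what the paper spells out, but add useful context rather than changing the argument.
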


\begin{corollary}\label{sigma_for_one_point}
    Согласно лемме \ref{properties_grad_f_gamma_one_point} уравнения \eqref{sigma_for_l1_randomization_one_point} и \eqref{sigma_for_l2_randomization_one_point} для одноточечного оракула примут вид
    \begin{longtable}{ | c | c | }
            \hline
            Схема сглаживания с $l_1$-рандомизацией & Схема сглаживания с $l_2$-рандомизацией  \\ [1ex] 
            \hline
            \hline
            \endhead
            & \\
            $
                \sigma^2 \leq 32 d^{3 - \frac{2}{p}} \frac{G^2 M_2^2}{\varepsilon^2}
            $ &
            $
               \sigma^2 \leq 8 \min \left\{ q, \ln d \right\} d^{3 - \frac{2}{p}} \frac{G^2 M_2^2}{\varepsilon^2}
            $ \\ [2ex]
            \hline
            \hline
    \end{longtable}
    если $\Delta$ достаточно мала.
\end{corollary}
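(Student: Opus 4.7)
The plan is to do nothing more than substitute the value of the smoothing parameter $\gamma$ from Corollary \ref{gamma_parameter_clear} into the simplified one-point variance bounds \eqref{sigma_for_l1_randomization_one_point} and \eqref{sigma_for_l2_randomization_one_point} of Lemma \ref{properties_grad_f_gamma_one_point}, in complete analogy with how Corollary \ref{sigma_for_two_point} was derived from the two-point bounds. The only structural difference compared with the two-point case is that the one-point oracle carries an additional $1/\gamma^2$ factor in the variance, so after substituting $\gamma$ of order $\varepsilon/M_2$ the final estimate picks up an extra $M_2^2/\varepsilon^2$ and, in the $l_1$ case, a cancellation of one power of $d$.

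For the $l_1$-randomization I would start from $\mathds{E}_e \left[ \| \nabla f_\gamma(x,\xi,e) \|_q^2 \right] \leq 2 d^{4-2/p} G^2/\gamma^2$ and plug in $\gamma = \sqrt{d}\,\varepsilon/(4 M_2)$; using $1/\gamma^2 = 16 M_2^2/(d \varepsilon^2)$, the prefactor $2 d^{4-2/p}$ turns into $32 d^{3-2/p}$ after one power of $d$ cancels, which is exactly the claimed $32\, d^{3-2/p}\, G^2 M_2^2/\varepsilon^2$. For the $l_2$-randomization I would repeat the computation starting from $2 \min\{q,\ln d\}\, d^{3-2/p}\, G^2/\gamma^2$ with $\gamma = \varepsilon/(2 M_2)$; here $1/\gamma^2 = 4 M_2^2/\varepsilon^2$, no power of $d$ cancels, and the prefactor $2$ simply becomes $8$, yielding $8 \min\{q,\ln d\}\, d^{3-2/p}\, G^2 M_2^2/\varepsilon^2$.

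There is essentially no obstacle: the argument is pure bookkeeping inherited from Lemma \ref{properties_grad_f_gamma_one_point} and Corollary \ref{gamma_parameter_clear}. Two small points are worth flagging. The ``$\Delta$ sufficiently small'' clause is inherited verbatim from the lemma, and unpacking it amounts to requiring $\Delta^2 \lesssim G^2$ so that the noise contribution $\kappa(p,d)\Delta^2/\gamma^2$ is dominated by the signal term $\kappa(p,d) G^2/\gamma^2$ already present under Assumption \ref{assumption:SS_3}. The only arithmetic pitfall is to keep the two different exponents straight: $4-2/p$ for the one-point $l_1$ bound versus $3-2/p$ for the one-point $l_2$ bound. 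It is precisely this extra power of $d$ in the $l_1$ case, partially absorbed by the $\sqrt{d}$ appearing in $\gamma$, that produces the asymmetry between the two final estimates and makes the prefactor jump from $2$ to $32$ in the $l_1$ case but only from $2$ to $8$ in the $l_2$ case.
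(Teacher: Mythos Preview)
Your proposal is correct and follows exactly the same approach as the paper: the corollary is stated as an immediate consequence of Lemma~\ref{properties_grad_f_gamma_one_point} obtained by substituting the values of $\gamma$ from Corollary~\ref{gamma_parameter_clear} into the simplified bounds \eqref{sigma_for_l1_randomization_one_point} and \eqref{sigma_for_l2_randomization_one_point}, and your arithmetic for both the $l_1$ and $l_2$ cases matches precisely.
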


\begin{remark}
    \label{remark_convex_concave}
    Если вместо стохастической негладкой выпуклой задачи оптимизации \eqref{Convex_opt_problem} рассматривать стохастическую негладкую выпукло-вогнутую задачу с седловой точкой
    \begin{equation*}
        \min_{x \in Q_x \subseteq \mathbb{R}^{d_x}} \max_{y \in Q_y \subseteq \mathbb{R}^{d_y}} \left\{ f(x,y):= \mathbb{E} \left[ f(x,y,\xi) \right] \right\},
    \end{equation*}
    то применяя схему сглаживания, описанную выше в данном разделе, отдельно к $x$ и $y$ переменным, получим те же результаты, что и для выпуклой оптимизации при соответствующих изменениях $f(x,\xi)$ на $f(z,\xi)$, где $z := (x, y), z \in Q_z:= Q_x \times Q_y$, за исключением одного пункта в лемме \ref{properties_f_gamma}:
    \begin{itemize}
        \item ($\Tilde{e} \in RB_1^d(1)$) вместо
        \begin{equation*}
            f(x) \leq f_\gamma(x) \leq f(x) + \frac{2}{\sqrt{d}} \gamma M_2
        \end{equation*}
        имеем 
        \begin{equation*}
            f(x, y) - \frac{2}{\sqrt{d}} \gamma_y M_{2,y} \leq f_\gamma(x, y) \leq f(x, y) + \frac{2}{\sqrt{d}} \gamma_x M_{2,x};
        \end{equation*}
        
        \item ($\Tilde{e} \in RB_2^d(1)$) вместо
        \begin{equation*}
            f(x) \leq f_\gamma(x) \leq f(x) + \gamma M_2
        \end{equation*}
        имеем 
        \begin{equation*}
            f(x, y) - \gamma_y M_{2,y} \leq f_\gamma(x, y) \leq f(x, y) + \gamma_x M_{2,x},
        \end{equation*}
    \end{itemize}
    
    где $\gamma = (\gamma_x, \gamma_y)$, а $M_{2,x}$ и $M_{2,y}$~--- соответствующие константы Липшица в $l_2$-норме. 
\end{remark}



\section{Федеративное обучение}
\label{section:Federated_Learning}
 Архитектура распределенного обучения выглядит следующим образом: между «компьютерами» распределяется набор данных, каждый компьютер делает одно локальное обновление (локальный шаг, например шаг стохастического градиентного спуска), после чего происходит глобальное обновление (глобальный шаг, коммуникация всех компьютеров со всеми), далее цепочка локально-глобальных обновлений повторяется. Однако, глобальное обновление, например при большом размере данных, тратит много вычислительных ресурсов, в отличие от локального обновления. Тогда вводится архитектура федеративного обучения, представленная на фиг. \ref{Architecture_FL},
\begin{figure}[ht]
    \centering
    \includegraphics[width=0.5\textwidth]{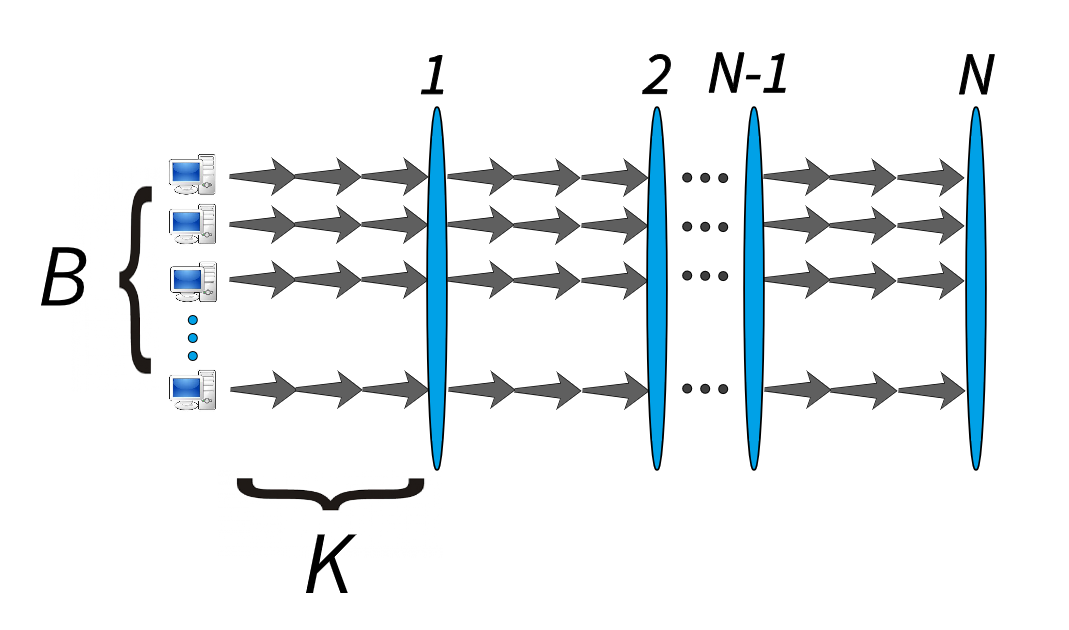}
    \caption{Архитектура федеративного обучения.}
    \label{Architecture_FL}
\end{figure} где в гомогенном случае $B$ компьютеров делают параллельно $K$ локальных обновлений перед каждым коммуникационном раундом, общее число которых составляет $N$. Таким образом, $N K$ является общим числом итераций алгоритма, а $T = N \cdot K \cdot B$ является общим числом вызовов стохастического градиента.

\subsection{Оптимальные алгоритмы первого порядка}\label{subsection:First_order_methods_for_FL}
В данной статье остановимся и рассмотрим класс пробатченных ускоренных методов первого порядка, а именно Minibatch Accelerated SGD (Mb-Ac-SGD) и Single-Machine Accelerated SGD (SM-Ac-SGD) из \cite{Woodworth_2021}, Local-AC-CA из \cite{Woodworth_2020} и FedAc из  \cite{Yuan_Ma_2020}, результаты скорости сходимости которых представлены в таблице \ref{Convergence_rates}. В таблице были использованы следующие обозначения:  $R$--$ \| x^0 - x_* \|_2$; $B$-- количество работающих компьютеров; $K$-- число локальных обновлений; $N$-- количество коммуникационных раундов; $L$-- гладкость.

\setcounter{table}{0}

\begin{table}[ht] 

    \caption{\label{Convergence_rates} Результаты скорости сходимости}
    \begin{tabular}{lll}
    \toprule
    Алгоритм &  $\mathbb{E} \left[ f(\cdot) \right] - f^*  \lesssim \dots$ & Ссылка \\
    \midrule
    Mb-Ac-SGD & $\frac{LR^2}{N^2} + \frac{\sigma R}{\sqrt{BNK}}$ & (Woodworth и др., 2021) \cite{Woodworth_2021} \\
    SM-Ac-SGD & $\frac{LR^2}{N^2 K^2} + \frac{\sigma R}{\sqrt{NK}}$ & (Woodworth и др., 2021) \cite{Woodworth_2021} \\
    Local-AC-CA & $\frac{LR^2}{N^2 K^2} + \frac{\sigma R}{\sqrt{BNK}}$ & (Woodworth и др., 2020) \cite{Woodworth_2020} \\
    FedAc & $\frac{LR^2}{N^2 K} + \frac{\sigma R}{\sqrt{BNK}} + \min \left\{ \frac{L^{\frac{1}{3}} \sigma^{\frac{2}{3}} R^{\frac{4}{3}}}{N K^{\frac{1}{3}}}, \frac{L^{\frac{1}{2}} \sigma^{\frac{1}{2}} R^{\frac{3}{2}}}{N K^{\frac{1}{4}}} \right\}$ & (Yuan, Ma, 2020) \cite{Yuan_Ma_2020} \\
    \midrule
    Mb-SMP & $\max \left\{\frac{ LR^2}{N}, \frac{ \sigma R}{\sqrt{BNK}} \right\}$ & Приложение \ref{Upper_bound} \\
    SM-SMP & $\max \left\{\frac{ LR^2}{N K}, \frac{ \sigma R}{\sqrt{NK}} \right\}$ & Приложение \ref{Upper_bound} \\
    \bottomrule
    \end{tabular}
\end{table}

Для квадратичной целевой функции было доказано, что $K > 1$ локальное обновление приводит к оптимальным оценкам скорости сходимости (алгоритм Local-AC-CA из \cite{Woodworth_2020}). Алгоритм FedAc из \cite{Yuan_Ma_2020} обобщает результаты статьи \cite{Woodworth_2020} на случай выпуклых функций. 

Но уже в 2021 году были получены оптимальные оценки скорости сходимости в статье \cite{Woodworth_2021}. В данной статье утверждается, что оптимальные оценки можно получить только в двух случаях. Первый случай (алгоритм Minibatch Accelerated SGD) предполагает, что каждый компьютер выполняет одно локальное обновление перед коммуникационном раундом. Второй случай (алгоритм Single-Machine Accelerated SGD) предполагает, что работает только один компьютер, который выполняет $NK$ обновлений. Несмотря на доказанные результаты статьи \cite{Woodworth_2021}, на практике в общем случае выпуклых функции получается использовать $K > 1$ локальных шагов, незначительно теряя при этом в точности, однако существенно выигрывая в вычислительных ресурсах. Именно практические результаты позволяют ожидать в будущем позитивные теоретические результаты.

Существующие алгоритмы первого порядка для архитектуры федеративного обучения зачастую решают задачу выпуклой оптимизации. Что касается задач оптимизации с седловой точкой, то алгоритмы в архитектуре федеративного обучения на данный момент отсутствуют. В данной статье мы разработали и получили оптимальные оценки скорости сходимости пробатченных методов первого порядка Minibatch SMP (Mb-SMP) и Single-Machine SMP (SM-SMP) для решения задач оптимизации с седловой точкой, используя аналогичный подход, что и в выпуклой оптимизации \cite{Woodworth_2021}. Оценки сходимости для алгоритмов решения седловых задач также приведены в табл. \ref{Convergence_rates}. Подробное описание получения верхних оценок скорости сходимости для алгоритмов Minibatch SMP (Mb-SMP) и Single-Machine SMP (SM-SMP) приведено в приложении \ref{Upper_bound}. 


\subsection{Оптимальные алгоритмы нулевого порядка}\label{subsection:Gradient_Free_Algorithms}
Здесь представлен главный результат данной статьи, который заключается в объединении двух глобальных идей в области оптимизации, представленные в разд. \ref{section:Smooth_schemes} и п. \ref{subsection:First_order_methods_for_FL}. А именно решение стохастических негладких выпуклых/выпукло-вогнутых задач оптимизации безградиентными алгоритмами архитектуры федеративного обучения. Для разработки и применения безградиентных методов решения негладких задач предлагается выбрать метод первого порядка, используемый для решения гладких задач в архитектуре федеративного обучения. Далее необходимо модифицировать алгоритм, выбранного метода, путем замены вычисления стохастического градиента на безградиентную аппроксимацию с $l_1$ \eqref{grad_l1_randomization} или $l_2$ \eqref{grad_l2_randomization} рандомизацией. Полученный безградиентный алгоритм будет иметь одноименное название алгоритма первого порядка, однако он не будет требовать информацию о стохастическом градиенте. 

Пусть под алгоритмом \textbf{A}$(L, \sigma^2)$ будем понимать ускоренные пакетные алгоритмы решения задач выпуклой оптимизации: Minibatch и Single-Machine Accelerated SGD, Local-AC-CA и FedAc и ускоренные пакетные алгоритмы решения задач с седловой точкой: Minibatch SMP и Single-Machine SMP в архитектуре федеративного обучения. Тогда в теоремах \ref{theorem_1}-\ref{theorem_2} предполагаем, что выполняется данное свойство: алгоритм \textbf{A}$(L, \sigma^2)$ (со смещенным безградиентным оракулом $\nabla f_\gamma(x,\xi,e)$) является надежным, если смещение из следствия \ref{corollary_lemma_3_l1_randomization} для $l_1$-рандомизации, из следствия \ref{corollary_lemma_3_l2_randomization} для $l_2$-рандомизации (рассмотренных в п. \ref{estimate_noise}) не накапливается в течение итерации метода. То есть, если для \textbf{A}$(L, \sigma^2)$ при $\Delta = 0$:
\begin{equation*}
    \mathds{E}[f_{\gamma}(x^{N}) - f(x_{*})] \leq \Theta_A (N),
\end{equation*}
тогда 
\begin{itemize}
    \item при $\Delta > 0$ и $d^{4-\frac{2}{p}} \Delta^2 \gamma^{-2} \lesssim \kappa(p,d)M_2^2 $ из \eqref{sigma_for_l1_randomization}:
    \begin{equation}
    \label{Noise_for_l1}
        \mathds{E}[f_{\gamma}(x^{N}) - f(x_{*})] \leq O \left( \Theta_A (N) + d \Delta R \gamma^{-1} \right);
    \end{equation}
    
    \item при $\Delta > 0$ и $d^2 \Delta^2 \gamma^{-2} \lesssim dM_2^2 $ из \eqref{sigma_for_l2_randomization}:
    \begin{equation}
    \label{Noise_for_l2}
        \mathds{E}[f_{\gamma}(x^{N}) - f(x_{*})] \leq O \left( \Theta_A (N) + \sqrt{d} \Delta R \gamma^{-1} \right).
    \end{equation}
\end{itemize}

Предположение о выполнении данного свойства базируется на статье \cite{Gorbunov_2019}, где был разработан анализ сходимости методов для смещенных стохастических оракулов. Поэтому в теоремах \ref{theorem_1}-\ref{theorem_2} приводим результаты, предполагая, что можно по аналогии провести анализ сходимости методов федеративного обучения, рассмотренных в этой работе, для смещенных стохастических оракулов. Однако, в доказательстве теорем \ref{theorem_1}-\ref{theorem_2} будем рассматривать случай при $\Delta = 0$ и приводить оптимальные оценки параметров разработанных безградиентных алгоритмов.

\begin{theorem}\label{theorem_1}
    Схема сглаживания из разд. \ref{section:Smooth_schemes}, применяемая к задаче \eqref{Convex_opt_problem}, обеспечивает сходимость следующих двухточечных безградиентных алгоритмов: Minibatch и Single-Machine Accelerated SGD \cite{Woodworth_2021}, Local-AC-CA \cite{Woodworth_2020} и FedAc \cite{Yuan_Ma_2020}. Другими словами, для достижения $\varepsilon$ точности решения задачи \eqref{Convex_opt_problem} необходимо проделать $NK$ итераций с максимально допустимым уровнем шума $\Delta$ и общим числом вызова безградиентного оракула $T$ в соответствии с выбранным методом и схемой сглаживания:
    \begin{itemize}
        \item Minibatch Accelerated SGD
        \begin{itemize}
            \item[i)] для $l_1$-рандомизации (\ref{grad_l1_randomization}):
            \begin{equation*}
                \Delta = O\left( \frac{ \varepsilon^2 }{\sqrt{d} M_2 R} \right); 
            \end{equation*}
            \begin{equation*}
                 N = O\left( \frac{d^{1/4} \sqrt{M M_2} R}{\varepsilon} \right); \;\;\;  
                 K = 1; \;\;\; 
                 B = O\left( \frac{\kappa(p,d)  M_2^2 R^2}{KN \varepsilon^2} \right);
            \end{equation*}
            \begin{equation*}
                 T =  O \left(  \frac{\kappa(p,d)  M_2^2 R^2}{ \varepsilon^2}   \right) = \begin{cases}
                  O \left(  \frac{d M_2^2 R^2}{ \varepsilon^2}   \right), & p = 2,\\
                  O \left(  \frac{M_2^2 R^2}{ \varepsilon^2}   \right), & p = 1;
                \end{cases}
            \end{equation*}
        
        \item[ii)] для $l_2$-рандомизации (\ref{grad_l2_randomization}):
            \begin{equation*}
                \Delta = O\left( \frac{ \varepsilon^2 }{\sqrt{d} M_2 R} \right); 
            \end{equation*}
            \begin{equation*}
                 N = O\left( \frac{d^{1/4} \sqrt{M M_2} R}{\varepsilon} \right); \;\;\;  
                 K = 1; \;\;\; 
                 B = O\left( \frac{\kappa(p,d) d M_2^2 R^2}{KN \varepsilon^2} \right);
            \end{equation*}
            \begin{equation*}
                 T = \tilde O \left(  \frac{\kappa(p,d) d M_2^2 R^2}{ \varepsilon^2}   \right) = \begin{cases}
                 \tilde O \left(  \frac{d M_2^2 R^2}{ \varepsilon^2}   \right), & p = 2,\\
                 \tilde O \left(  \frac{(\ln d) M_2^2 R^2}{ \varepsilon^2}   \right), & p = 1;
                \end{cases}
            \end{equation*}
        \end{itemize}
        
        \item Single-Machine Accelerated SGD
        \begin{itemize}
            \item[i)] для $l_1$-рандомизации (\ref{grad_l1_randomization}):
            \begin{equation*}
                \Delta = O\left( \frac{ \varepsilon^2 }{\sqrt{d} M_2 R} \right); 
            \end{equation*}
            \begin{equation*}
                 N = 1; \;\;\;  
                 K = O\left( \frac{\kappa(p,d) M_2^2 R^2}{ \varepsilon^2} \right); \;\;\; 
                 B = 1;
            \end{equation*}
            \begin{equation*}
                 T =  O \left(  \frac{\kappa(p,d) M_2^2 R^2}{ \varepsilon^2}   \right) = \begin{cases}
                  O \left(  \frac{d M_2^2 R^2}{ \varepsilon^2}   \right), & p = 2,\\
                  O \left(  \frac{M_2^2 R^2}{ \varepsilon^2}   \right), & p = 1;
                \end{cases}
            \end{equation*}
        
        \item[ii)] для $l_2$-рандомизации (\ref{grad_l2_randomization}):
            \begin{equation*}
                \Delta = O\left( \frac{ \varepsilon^2 }{\sqrt{d} M_2 R} \right); 
            \end{equation*}
            \begin{equation*}
                 N = 1; \;\;\;  
                 K = O\left( \frac{\kappa(p,d) M_2^2 R^2}{ \varepsilon^2} \right); \;\;\; 
                 B = 1;
            \end{equation*}
            \begin{equation*}
                 T = \tilde O \left(  \frac{\kappa(p,d) M_2^2 R^2}{ \varepsilon^2}   \right) = \begin{cases}
                 \tilde O \left(  \frac{d M_2^2 R^2}{ \varepsilon^2}   \right), & p = 2,\\
                 \tilde O \left(  \frac{(\ln d) M_2^2 R^2}{ \varepsilon^2}   \right), & p = 1;
                \end{cases}
            \end{equation*}
        \end{itemize}
        
        \item Local-AC-SA
        \begin{itemize}
            \item[i)] для $l_1$-рандомизации (\ref{grad_l1_randomization}):
            \begin{equation*}
                \Delta = O\left( \frac{ \varepsilon^2 }{\sqrt{d} M_2 R} \right); 
            \end{equation*}
            \begin{equation*}
                 N = 1; \;\;\;  
                 K = O\left( \frac{d^{1/4} \sqrt{M M_2} R}{\varepsilon} \right); \;\;\; 
                 B = O\left( \frac{\kappa(p,d) M_2^2 R^2}{KN \varepsilon^2} \right);
            \end{equation*}
            \begin{equation*}
                 T =  O \left(  \frac{\kappa(p,d) M_2^2 R^2}{ \varepsilon^2}   \right) = \begin{cases}
                  O \left(  \frac{d M_2^2 R^2}{ \varepsilon^2}   \right), & p = 2,\\
                  O \left(  \frac{ M_2^2 R^2}{ \varepsilon^2}   \right), & p = 1;
                \end{cases}
            \end{equation*}
        
        \item[ii)] для $l_2$-рандомизации (\ref{grad_l2_randomization}):
            \begin{equation*}
                \Delta = O\left( \frac{ \varepsilon^2 }{\sqrt{d} M_2 R} \right); 
            \end{equation*}
            \begin{equation*}
                 N = 1; \;\;\;  
                 K = O\left( \frac{d^{1/4} \sqrt{M M_2} R}{\varepsilon} \right); \;\;\; 
                 B = O\left( \frac{\kappa(p,d) M_2^2 R^2}{KN \varepsilon^2} \right);
            \end{equation*}
            \begin{equation*}
                 T = \tilde O \left(  \frac{\kappa(p,d) M_2^2 R^2}{ \varepsilon^2}   \right) = \begin{cases}
                 \tilde O \left(  \frac{d M_2^2 R^2}{ \varepsilon^2}   \right), & p = 2,\\
                 \tilde O \left(  \frac{(\ln d) M_2^2 R^2}{ \varepsilon^2}   \right), & p = 1;
                \end{cases}
            \end{equation*}
        \end{itemize}
        
        \item Federated Accelerated SGD (FedAc)
        \begin{itemize}
            \item[i)] для $l_1$-рандомизации (\ref{grad_l1_randomization}):
            \begin{equation*}
                \Delta = O\left( \frac{ \varepsilon^2 }{\sqrt{d} M_2 R} \right); 
            \end{equation*}
            \begin{equation*}
                 N = O\left( \frac{d^{1/6} (\kappa(p,d) M)^{1/3} M_2 R^{4/3}}{K^{1/3} \varepsilon^{4/3}} \right); \;\;\;  
                 K = O\left( \frac{ \kappa(p,d) M^2_2 R^2}{B N \varepsilon^2} \right); \;\;\; 
                 B = 1;
            \end{equation*}
            \begin{equation*}
                 T =  O \left(  \frac{\kappa(p,d) M_2^2 R^2}{ \varepsilon^2}   \right) = \begin{cases}
                  O \left(  \frac{d M_2^2 R^2}{ \varepsilon^2}   \right), & p = 2,\\
                  O \left(  \frac{ M_2^2 R^2}{ \varepsilon^2}   \right), & p = 1;
                \end{cases}
            \end{equation*}
        
        \item[ii)] для $l_2$-рандомизации (\ref{grad_l2_randomization}):
            \begin{equation*}
                \Delta = O\left( \frac{ \varepsilon^2 }{\sqrt{d} M_2 R} \right); 
            \end{equation*}
            \begin{equation*}
                 N = O\left( \frac{d^{1/2} (\kappa(p,d) M)^{1/3} M_2 R^{4/3}}{K^{1/3} \varepsilon^{4/3}} \right); \;\;\;  
                 K = O\left( \frac{ \kappa(p,d) M^2_2 R^2}{B N \varepsilon^2} \right); \;\;\; 
                 B = 1;
            \end{equation*}
            \begin{equation*}
                 T = \tilde O \left(  \frac{\kappa(p,d) M_2^2 R^2}{ \varepsilon^2}   \right) = \begin{cases}
                 \tilde O \left(  \frac{d M_2^2 R^2}{ \varepsilon^2}   \right), & p = 2,\\
                 \tilde O \left(  \frac{(\ln d) M_2^2 R^2}{ \varepsilon^2}   \right), & p = 1.
                \end{cases}
            \end{equation*}
        \end{itemize}
    \end{itemize}
\end{theorem}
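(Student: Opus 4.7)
The plan is to carry out, for each of the four algorithms in the statement, the same three-step recipe: plug the smoothed problem into the first-order rate of $\textbf{A}(L,\sigma^2)$ from Table~\ref{Convergence_rates}, substitute the smoothness constant from Corollary~\ref{Const_Lipshitz_grad} and the variance bound from Corollary~\ref{sigma_for_two_point}, and then balance the resulting two error terms against $\varepsilon/2$. Corollary~\ref{gamma_parameter_clear} guarantees that $\varepsilon/2$-accuracy for $f_\gamma$ transfers to $\varepsilon$-accuracy for the original $f$, so it suffices to solve $\Theta_A(N)\le\varepsilon/2$ under the hypothesis (stated before the theorem) that bias from the inexact oracle does not accumulate; the additive bias term is controlled separately using \eqref{Noise_for_l1}--\eqref{Noise_for_l2}.

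Concretely, I would begin with Minibatch Accelerated SGD, whose rate is $LR^2/N^2+\sigma R/\sqrt{BNK}$. Setting $K=1$ and requiring the deterministic part $L_{f_\gamma}R^2/N^2\le\varepsilon/4$, with $L_{f_\gamma}=2\sqrt{d}MM_2/\varepsilon$, yields exactly $N=O(d^{1/4}\sqrt{MM_2}\,R/\varepsilon)$. The stochastic part $\sigma R/\sqrt{BNK}\le\varepsilon/4$ combined with $\sigma^2\le\kappa(p,d)M_2^2$ (resp.\ with the extra $d$ for $l_2$) fixes $B=O(\kappa(p,d)M_2^2R^2/(NK\varepsilon^2))$; multiplying gives the claimed $T=NKB$. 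The Single-Machine version is handled identically with the roles of $N$ and $K$ swapped (one compute node, $NK$ sequential steps). For Local-AC-SA the parallel/sequential split is the same as for Mb-Ac-SGD but uses the sharper $LR^2/(N^2K^2)$ term, so the deterministic constraint determines $NK$ rather than $N$, and the stochastic constraint again fixes $B$. For FedAc, one has three terms to balance: I would first equate the two dominant ones (the accelerated $LR^2/(N^2K)$ and the third minimum term $L^{1/3}\sigma^{2/3}R^{4/3}/(NK^{1/3})$), which upon substitution of $L_{f_\gamma}$ produces the stated $N=O(d^{1/6}(\kappa(p,d)M)^{1/3}M_2R^{4/3}/(K^{1/3}\varepsilon^{4/3}))$ in the $l_1$ case and its $l_2$ analogue; the variance term then fixes $K$ for $B=1$.

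The noise bound $\Delta$ is identical across all four algorithms because it is inherited from the smoothing scheme, not from $\textbf{A}$. For the $l_1$ variant, plugging $\gamma=\sqrt{d}\varepsilon/(4M_2)$ from Corollary~\ref{gamma_parameter_clear} into the bias term $d\Delta R/\gamma$ of \eqref{Noise_for_l1} and demanding it be $O(\varepsilon)$ yields $\Delta=O(\varepsilon^2/(\sqrt{d}M_2R))$. For the $l_2$ variant, plugging $\gamma=\varepsilon/(2M_2)$ into $\sqrt{d}\Delta R/\gamma$ from \eqref{Noise_for_l2} gives the same expression, which is why $\Delta$ is reported identically in both columns. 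Finally, the compact $T$-formulas with the $p=1$ versus $p=2$ dichotomy follow from evaluating $\kappa(p,d)=48(1+\sqrt{2})^2 d^{2-2/p}$ in the $l_1$ case and $\kappa(p,d)=\sqrt{2}\min\{q,\ln d\}d^{2-2/p}$ in the $l_2$ case at those two values of $p$, which produces the two branches of the case-distinction.

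The main obstacle is not any single inequality but keeping the bookkeeping consistent: each algorithm uses a different allocation of effort between $N$, $K$, $B$, and for each one must check that the chosen $\gamma$ (fixed \emph{once} by Corollary~\ref{gamma_parameter_clear}) is compatible with both the deterministic term through $L_{f_\gamma}$ and the stochastic term through $\sigma^2$. The FedAc case is the most delicate because the third term in its rate is itself a minimum and one has to verify that the branch implicitly selected by the balancing is indeed the active one; this is where most of the verification work lies, while the Mb/SM-Ac-SGD and Local-AC-SA cases reduce to a pair of two-term balances of the standard accelerated shape.
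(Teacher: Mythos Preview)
Your proposal is correct and follows essentially the same approach as the paper: for each algorithm, start from the first-order convergence bound in Table~\ref{Convergence_rates} augmented by the bias term from \eqref{Noise_for_l1}--\eqref{Noise_for_l2}, require each summand to be at most a fixed fraction of $\varepsilon$ (the paper uses $\varepsilon/6$ for the three-term methods and $\varepsilon/8$ for FedAc, you use $\varepsilon/4$, which is immaterial in $O$-notation), substitute $L_{f_\gamma}$ and $\sigma^2$ from Corollaries~\ref{Const_Lipshitz_grad}--\ref{sigma_for_two_point}, and solve for $N,K,B,T$ and $\Delta$. The only stylistic difference is in FedAc, where the paper bounds each of the four terms separately and then reads off the binding constraint, whereas you phrase it as ``balancing the two dominant terms''; both lead to the same expressions for $N$ and $K$.
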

Подробное доказательство теоремы \ref{theorem_1} приведено в приложении \ref{proof_theorem_1}.

По результатам теоремы \ref{theorem_1} стоит обратить внимание, что количество локальных обновлений $K>1$ с использованием $B>1$ компьютеров при оптимальных значениях $N$ и $T$ получилось только для одного алгоритма, а именно Local-AC-CA, который применяется для частного квадратичного случая. Это подтверждает, что в теории на данный момент существует только два оптимальных (по $\Delta$, $N$ и $T$) алгоритма: Minibatch и Single-Machine Accelerated SGD. 
В работе \cite{Yuan_Ma_2020} показано, что на практике можно получить результат, в котором алгоритм совершает параллельно $K>1$ локальных обновлений на каждых $B>1$ компьютерах. 

В теореме \ref{theorem_2} также получены оптимальные оценки параметров, рассмотренных в п.~\ref{subsection:First_order_methods_for_FL} алгоритмов для решения стохастических негладких задач с седловой точкой, оптимальных по $\Delta$, $N$ и $T$.  

\begin{theorem}
\label{theorem_2}
    Схема сглаживания из разд. \ref{section:Smooth_schemes}, применяемая к седловой задаче (см. замечание \ref{remark_convex_concave}), обеспечивает сходимость следующих двухточечных безградиентных алгоритмов: Minibatch SMP и Single-Machine SMP из приложения \ref{Upper_bound}.  Другими словами, для достижения $\varepsilon$ точности решения седловой задачи (см. замечание \ref{remark_convex_concave}) необходимо проделать $NK$ итераций с максимально допустимым уровнем шума $\Delta$ и общим числом вызова безградиентного оракула $T$ в соответствии с выбранным методом и схемой сглаживания:
    \begin{itemize}
        \item Minibatch SMP
        \begin{itemize}
            \item[i)] для $l_1$-рандомизации (\ref{grad_l1_randomization}):
            \begin{equation*}
                \Delta = O\left( \frac{ \varepsilon^2 }{\sqrt{d} M_2 R} \right); 
            \end{equation*}\\\\
            \begin{equation*}
                 N = 1; \;\;\;  
                 K = 1; \;\;\; 
                 B = O\left( \frac{\kappa(p,d) M_2^2 R^2}{ \varepsilon^2} \right);
            \end{equation*}
            \begin{equation*}
                 T =  O \left(  \frac{\kappa(p,d) M_2^2 R^2}{ \varepsilon^2}   \right) = \begin{cases}
                  O \left(  \frac{d M_2^2 R^2}{ \varepsilon^2}   \right), & p = 2,\\
                  O \left(  \frac{ M_2^2 R^2}{ \varepsilon^2}   \right), & p = 1;
                \end{cases}
            \end{equation*}
        
        \item[ii)] для $l_2$-рандомизации (\ref{grad_l2_randomization}):
            \begin{equation*}
                \Delta = O\left( \frac{ \varepsilon^2 }{\sqrt{d} M_2 R} \right); 
            \end{equation*}
            \begin{equation*}
                 N = 1; \;\;\;  
                 K = 1; \;\;\; 
                 B = O\left( \frac{\kappa(p,d) M_2^2 R^2}{ \varepsilon^2} \right);
            \end{equation*}
            \begin{equation*}
                 T = \tilde O \left(  \frac{\kappa(p,d) M_2^2 R^2}{ \varepsilon^2}   \right) = \begin{cases}
                 \tilde O \left(  \frac{d M_2^2 R^2}{ \varepsilon^2}   \right), & p = 2,\\
                 \tilde O \left(  \frac{(\ln d) M_2^2 R^2}{ \varepsilon^2}   \right), & p = 1;
                \end{cases}
            \end{equation*}
        \end{itemize}
        
        \item Single-Machine SMP
        \begin{itemize}
            \item[i)] для $l_1$-рандомизации (\ref{grad_l1_randomization}):
            \begin{equation*}
                \Delta = O\left( \frac{ \varepsilon^2 }{\sqrt{d} M_2 R} \right); 
            \end{equation*}
            \begin{equation*}
                 N = 1; \;\;\;  
                 K = O\left( \frac{\kappa(p,d) M_2^2 R^2}{ \varepsilon^2} \right); \;\;\; 
                 B = 1;
            \end{equation*}
            \begin{equation*}
                 T = O \left(  \frac{\kappa(p,d) M_2^2 R^2}{ \varepsilon^2}   \right) = \begin{cases}
                  O \left(  \frac{d M_2^2 R^2}{ \varepsilon^2}   \right), & p = 2,\\
                  O \left(  \frac{M_2^2 R^2}{ \varepsilon^2}   \right), & p = 1;
                \end{cases}
            \end{equation*}
        
        \item[ii)] для $l_2$-рандомизации (\ref{grad_l2_randomization}):
            \begin{equation*}
                \Delta = O\left( \frac{ \varepsilon^2 }{\sqrt{d} M_2 R} \right); 
            \end{equation*}
            \begin{equation*}
                 N = 1; \;\;\;  
                 K = O\left( \frac{\kappa(p,d) M_2^2 R^2}{ \varepsilon^2} \right); \;\;\; 
                 B = 1;
            \end{equation*}
            \begin{equation*}
                 T = \tilde O \left(  \frac{\kappa(p,d) M_2^2 R^2}{ \varepsilon^2}   \right) = \begin{cases}
                 \tilde O \left(  \frac{d M_2^2 R^2}{ \varepsilon^2}   \right), & p = 2,\\
                 \tilde O \left(  \frac{(\ln d) M_2^2 R^2}{ \varepsilon^2}   \right), & p = 1.
                \end{cases}
            \end{equation*}
        \end{itemize}
    \end{itemize}
\end{theorem}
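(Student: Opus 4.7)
The plan is to follow the same smoothing-to-federated-learning template as in Theorem~\ref{theorem_1}, but applied to the convex-concave saddle-point setting via Remark~\ref{remark_convex_concave}, and with Mb-SMP / SM-SMP (whose rates are listed in Table~\ref{Convergence_rates} and whose upper bounds are derived in Appendix~\ref{Upper_bound}) as the base first-order method. In more detail, I first invoke Remark~\ref{remark_convex_concave} to lift the smoothing scheme of Section~\ref{section:Smooth_schemes} from the convex problem \eqref{Convex_opt_problem} to the saddle problem, obtaining a smoothed functional $f_\gamma(x,y)$ that sandwiches $f(x,y)$ up to an additive $\gamma M_2$ error in each variable (with an extra factor $2/\sqrt d$ in the $l_1$-case). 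Together with Corollary~\ref{gamma_parameter_clear}, this reduces solving the saddle problem to $\varepsilon$-accuracy to solving the smoothed saddle problem to $\varepsilon/2$-accuracy with $\gamma = \sqrt{d}\,\varepsilon/(4M_2)$ in the $l_1$-case and $\gamma = \varepsilon/(2M_2)$ in the $l_2$-case.

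Next, I substitute $L = L_{f_\gamma} = 2\sqrt{d}\,M M_2/\varepsilon$ from Corollary~\ref{Const_Lipshitz_grad} and $\sigma^2 = O\bigl(\kappa(p,d)\,M_2^2\bigr)$ from Corollary~\ref{sigma_for_two_point} into the rates of Table~\ref{Convergence_rates}, namely $\max\{LR^2/N,\ \sigma R/\sqrt{BNK}\}$ for Mb-SMP and $\max\{LR^2/(NK),\ \sigma R/\sqrt{NK}\}$ for SM-SMP. Requiring the resulting duality gap to be at most $\varepsilon/2$ translates into two inequalities per algorithm; balancing them, exactly as in the proof of Theorem~\ref{theorem_1}, pins down the parameter tuples $(N,K,B)$ reported in the statement. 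Multiplying then yields $T = NKB = O\bigl(\kappa(p,d)\,M_2^2 R^2/\varepsilon^2\bigr)$, which specializes into the two cases $p=1$ and $p=2$ through the explicit form of $\kappa(p,d)$ and explains the extra $\ln d$ factor in the $l_2$-case.

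The noise tolerance $\Delta = O\bigl(\varepsilon^2/(\sqrt{d}\,M_2 R)\bigr)$ is derived as in Theorem~\ref{theorem_1} from the ``reliable-algorithm'' property codified in \eqref{Noise_for_l1}--\eqref{Noise_for_l2}: the bias contribution to the smoothed-oracle analysis is $d\Delta R/\gamma$ in the $l_1$-case and $\sqrt{d}\,\Delta R/\gamma$ in the $l_2$-case. Requiring this to be $O(\varepsilon)$ and substituting the calibrated $\gamma$ from Corollary~\ref{gamma_parameter_clear} yields the stated bound on $\Delta$ in both randomizations.

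The main obstacle is verifying that the Mb-SMP and SM-SMP analyses from Appendix~\ref{Upper_bound} remain valid with a biased stochastic oracle in the federated setting. Unlike the SGD-type methods of Theorem~\ref{theorem_1}, the Mirror-Prox template involves two vector-field evaluations per step, and both need the biased-oracle bound of type \eqref{Noise_for_l1}--\eqref{Noise_for_l2} to hold without accumulation over the $NK$ iterations; adapting the framework of \cite{Gorbunov_2019} to the extragradient iteration with $B$ parallel machines and $K$ local steps requires controlling the drift between local iterates, which is the delicate technical point. Once this is in place, the remainder is routine parameter balancing together with Remark~\ref{remark_convex_concave}, which guarantees that all estimates of Section~\ref{section:Smooth_schemes} carry over to the saddle-point setting.
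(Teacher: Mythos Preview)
Your proposal is correct and follows essentially the same route as the paper's proof in Appendix~\ref{proof_theorem_2}: invoke Remark~\ref{remark_convex_concave} to transfer the smoothing scheme to the saddle setting, plug $L_{f_\gamma}$, $\gamma$, and $\sigma^2$ from Corollaries~\ref{gamma_parameter_clear}--\ref{sigma_for_two_point} into the Mb-SMP/SM-SMP rates from Appendix~\ref{Upper_bound} (augmented by the bias term from \eqref{Noise_for_l1}--\eqref{Noise_for_l2}), split the $\varepsilon/2$ budget across the terms, and solve for $\Delta,N,K,B,T$. The ``drift between local iterates'' obstacle you flag is in fact absent here---Mb-SMP uses the $K$ per-machine calls only to form a single minibatch of size $BK$ per SMP step, and SM-SMP runs on one machine---so the only genuine caveat is the biased-oracle robustness, which the paper (like you) treats as an assumption justified by~\cite{Gorbunov_2019}.
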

Подробное доказательство теоремы \ref{theorem_2} приведено в приложении \ref{proof_theorem_2}.

 По результатам теоремы \ref{theorem_1} и теоремы \ref{theorem_2} не трудно увидеть, что для всех алгоритмов оптимальное число вызовов безградиентного оракула в $l_1$-норме равняется $ \tilde O \left(  \frac{(\ln d) M_2^2 R^2}{ \varepsilon^2} \right)$ с $l_2$-рандомизацией, в то время как для $l_1$-рандомизацией равняется $O \left(  \frac{ M_2^2 R^2}{ \varepsilon^2}  \right)$, где $\varepsilon$ является точностью решения негладкой задачи. Данный результат подтверждает, что схема сглаживания с $l_1$-рандомизацией лучше работает в архитектуре федеративного обучения, чем схема сглаживания с $l_2$-рандомизацией. В разд. \ref{section:Numerical_result} проверим этот результат на практическом эксперименте.
 
 \begin{remark}
    Для получения результатов теорем \ref{theorem_1} и \ref{theorem_2} использовалось предположение о доступности двухточечной обратной связи (см. п.~\ref{subsection:SS_3}). Если вместо двухточечной обратной связи использовать одноточечную (см. п.~\ref{subsection:SS_4}), то это приведет к такой же итерационной сложности (количеству коммуникационных раундов) $N$ и максимальному уровню неточности $\Delta$, однако оракульная сложность возрастет в $O \left( d/\varepsilon^2 \right)$ раз. Данный случай подробно рассмотрен в приложении \ref{case_one_point}.
 \end{remark}
 
 \begin{remark}
    Стоит отметить, что идея объединения двух техник, а именно федеративного обучения со схемами сглаживания, не ограничивается рассмотренными алгоритмами первого порядка и может быть использована для решения негладких задач другими алгоритмами, используемыми в архитектуре федеративного обучения. 
 \end{remark}



\section{Схемы доказательств}\label{section:Elements_for_proof}
В этом разделе приведем схемы для доказательств теоремы \ref{theorem_1} и теоремы \ref{theorem_2}. Подробные доказательства этих теорем можно найти в приложении \ref{proof_theorem_1}, \ref{proof_theorem_2}. Здесь сфокусируемся на доказательстве леммы \ref{properties_f_gamma}-\ref{properties_grad_f_gamma_one_point} и на нахождении оценки уровня неточности (враждебного шума). 

\subsection{Доказательство леммы \ref{properties_f_gamma}}
\label{subsection:5_1}
В данном п. рассмотрим неевклидовый случай, когда случайный вектор $\Tilde{e}$ равномерно распределен на $l_1$-шаре. Доказательство евклидового случая можно найти в теореме 2.1 из \cite{Gasnikov_2022_Smooth_Scheme}.

Для всех $x, y \in Q$
\begin{enumerate}
    \item $f(x) \leq f_\gamma(x) \leq f(x) + \frac{2}{\sqrt{d}} \gamma M_2$;
    
    \item $f_\gamma$~--- $M$-липшицева:
    \begin{equation*}
        |f_\gamma(y) - f_\gamma(x) | \leq \al{M \| y - x \|_p},
    \end{equation*}

    \item $f_\gamma$ имеет $L_{f_{\gamma}} = \frac{d M}{\al{2} \gamma}$-липшицевый градиент:
    \begin{equation*}
        \al{\| \nabla f_\gamma(y) - \nabla f_\gamma(x) \|_q \leq L_{f_{\gamma}} \| y - x \|_2 \leq L_{f_{\gamma}} \| y - x \|_p}.
    \end{equation*}
    
    где $q$ такой, что $1/p + 1/q = 1$.
    
\end{enumerate}

\begin{proof}\renewcommand{\qedsymbol}{}
    Для первого неравенства первого свойства воспользуемся выпуклостью функции $f(x)$
    \begin{equation*}
        f_\gamma(x) = \mathbb{E}_{\Tilde{e}} \left[ f(x + \gamma \Tilde{e}) \right] \geq \mathbb{E}_{\Tilde{e}} \left[ f(x) + \dotprod{\nabla f(x)}{\gamma \Tilde{e}}) \right] = \mathbb{E}_{\Tilde{e}} \left[ f(x) \right] = f(x).
    \end{equation*}
    Для второго неравенства первого свойства, применяя лемму 1 из \cite{Tsybakov_2022}, имеем:
    \begin{equation*}
        | f_\gamma (x)- f(x) | = | \mathbb{E}_{\Tilde{e}} \left[ f(x + \gamma \Tilde{e}) \right] - f(x) | \leq \mathbb{E}_{\Tilde{e}} \left[ | f(x + \gamma \Tilde{e}) - f(x) | \right] \leq \gamma M_2 \mathbb{E}_{\Tilde{e}} \left[ \| \Tilde{e} \|_2 \right] \leq \frac{2}{\sqrt{d}} \gamma M_2,
    \end{equation*}
    используя тот факт, что $f$ является $M_2$-липшицевой.
    
    Для второго свойства:
    \begin{equation*}
    \begin{split}
        | f_\gamma (y)- f_\gamma(x) | = | \mathbb{E}_{\Tilde{e}} \left[ f(y + \gamma \Tilde{e})  - f(x + \gamma \Tilde{e}) \right]| & \leq \mathbb{E}_{\Tilde{e}} \left[ | f(y + \gamma \Tilde{e})  - f(x + \gamma \Tilde{e}) | \right] \leq \al{M \| y - x \|_p}
    \end{split}
    \end{equation*}
    
    И для третьего свойства, применяя лемму 11 из \cite{Duchi_2012} имеем для любых $x,y \in Q$,
    	    \begin{equation*}
    	    \begin{split}
    	        \| \nabla f_\gamma(y) - \nabla f_\gamma(x) \|_q 
    	        & = \al{\left\| \int_{B^d_1(\gamma)} \nabla f(y + \Tilde{e}) \mu(\Tilde{e}) d\Tilde{e} - \int_{B^d_1(\gamma)} \nabla f(x + \Tilde{e}) \mu(\Tilde{e}) d\Tilde{e} \right\|_q} \leq \\ 
    	        & \leq \left\| \int_{Q_\gamma} \nabla f(\al{z}) \mu(\al{z} - y) d\al{z} - \int_{Q_\gamma} \nabla f(\al{z}) \mu(\al{z} - x) d\al{z} \right\|_q \leq \\
    	        & \leq  M  \underbrace{\int_{Q_\gamma} | \mu(\al{z} - y) - \mu(\al{z} - x) | d\al{z}}_{I_1}, 
    	    \end{split}
    	    \end{equation*}
    	    где \al{второе неравенство следует из $z = x + \Tilde{e}$ и} $\mu(x) = 
    	    \begin{cases}
                \frac{1}{V(B^d_1(\gamma))}, & x \in B^d_1(\gamma), \\
                0, & \text{иначе}.
            \end{cases}$ Далее для оценки интеграла $I_1$ применим тот же подход, что и в доказательстве леммы 8 из \cite{Yousefian_2012} и рассмотрим случаи где $\| y - x \|_1 > 2\gamma$ и $\| y - x \|_1 \leq 2\gamma$. 
            
            \textit{Случай 1} ($\| y - x \|_1 > 2\gamma$). \\
            Для всех $\Tilde{e}$ пусть $\| \Tilde{e} - x \|_1 \leq \gamma$, тогда имеем $ \| \Tilde{e} - y \|_1 > \gamma  \Rightarrow$  $\mu (\Tilde{e} - y) = 0$, поэтому 
            \begin{equation*}
            \begin{split}
                & \int_{\| \Tilde{e} - x \|_1 \leq \gamma} | \mu(\Tilde{e} - y) - \mu(\Tilde{e} - x) | d\Tilde{e} = 1,
            \end{split}
            \end{equation*}
             Аналогично для всех $\Tilde{e}$ пусть $\| \Tilde{e} - y \|_1 \leq \gamma$, тогда имеем $ \mu (\Tilde{e} - x) = 0$, поэтому 
            \begin{equation*}
            \begin{split}
                & \int_{\| \Tilde{e} - y \|_1 \leq \gamma} | \mu(\Tilde{e} - y) - \mu(\Tilde{e} - x) | d\Tilde{e} = 1,
            \end{split}
            \end{equation*} Следовательно 
            \begin{equation*}
                \int_{Q_\gamma} | \mu(\Tilde{e} - y) - \mu(\Tilde{e} - x) | d\Tilde{e} = \int_{\| \Tilde{e} - x \|_1 \leq \gamma} | \mu(\Tilde{e} - y) - \mu(\Tilde{e} - x) | d\Tilde{e} + \int_{\| \Tilde{e} - y \|_1 \leq \gamma} | \mu(\Tilde{e} - y) - \mu(\Tilde{e} - x) | d\Tilde{e} = 2.
            \end{equation*}
            
            \al{Поскольку $2 < \frac{\| y - x \|_1}{\gamma} \al{\leq \frac{d^{1 - 1/p} \| y - x \|_p}{\gamma}}$, учитывая тот факт, что для $p \in [1,2]$ выполнено $\| y - x \|_1 \leq d^{1- \frac{1}{p}} \| y - x \|_p $, тогда получим следующее неравенство:} 
            
            \begin{equation*}
                 \int_{Q_\gamma} | \mu(\Tilde{e} - y) - \mu(\Tilde{e} - x) | d\Tilde{e} \leq \frac{\al{d^{1 - 1/p} \| y - x \|_p}}{\gamma}.
            \end{equation*}
            
            \textit{Случай 2} ($\| y - x \|_1 \leq 2\gamma$): Разложим интеграл $I_1$ на 4 интеграла.
            
            \begin{equation*}
            \begin{split}
                 \int_{Q_\gamma} | \mu(\Tilde{e} - y) - \mu(\Tilde{e} - x) | d\Tilde{e} =\\ &  \hspace{-13.15em}
                 = \int_{\| \Tilde{e} - x \|_1 \leq \gamma \; \& \; \| \Tilde{e} - y \|_1 \leq \gamma} | \mu(\Tilde{e} - y) - \mu(\Tilde{e} - x) | d\Tilde{e} + \int_{\| \Tilde{e} - x \|_1 \leq \gamma \; \& \; \| \Tilde{e} - y \|_1 \geq \gamma} | \mu(\Tilde{e} - y) - \mu(\Tilde{e} - x) | d\Tilde{e} + \\ &  \hspace{-13.15em}
                 + \int_{\| \Tilde{e} - x \|_1 \geq \gamma \; \& \; \| \Tilde{e} - y \|_1 \leq \gamma} | \mu(\Tilde{e} - y) - \mu(\Tilde{e} - x) | d\Tilde{e} + \int_{\| \Tilde{e} - x \|_1 \geq \gamma \; \& \; \| \Tilde{e} - y \|_1 \geq \gamma} | \mu(\Tilde{e} - y) - \mu(\Tilde{e} - x) | d\Tilde{e}.
            \end{split}
            \end{equation*}
            
            Теперь рассмотрим каждый интеграл из разложения по отдельности:
            \begin{enumerate}
                \item Для первого и четвертого интегралов справедливо следующее 
                    \begin{equation*}
                        \mu(\Tilde{e} - x) = \mu(\Tilde{e} - y).
                    \end{equation*} Тогда, подставляя в первый интеграл, получим
                    \begin{equation}
                    \label{case_1_for_proof_lemma_3}
                        \int_{\| \Tilde{e} - x \|_1 \leq \gamma \; \& \; \| \Tilde{e} - y \|_1 \leq \gamma} | \mu(\Tilde{e} - y) - \mu(\Tilde{e} - x) | d\Tilde{e} = 0.
                    \end{equation}
                    Аналогично, подставляя в четвертый интеграл, получим
                    \begin{equation}
                    \label{case_2_for_proof_lemma_3}
                        \int_{\| \Tilde{e} - x \|_1 \geq \gamma \; \& \; \| \Tilde{e} - y \|_1 \geq \gamma} | \mu(\Tilde{e} - y) - \mu(\Tilde{e} - x) | d\Tilde{e} = 0.
                    \end{equation}
                    
                \item Когда $\| \Tilde{e} - x \|_1 \leq \gamma \; \& \; \| \Tilde{e} - y \|_1  \geq \gamma$ или $\| \Tilde{e} - x \|_1 \geq \gamma \; \& \; \| \Tilde{e} - y \|_1  \leq \gamma$, имеем 
                    \begin{equation*}
                        \int_{\| \Tilde{e} - x \|_1 \leq \gamma \; \& \; \| \Tilde{e} - y \|_1 \geq \gamma} | \mu(\Tilde{e} - y) - \mu(\Tilde{e} - x) | d\Tilde{e} = \int_{\| \Tilde{e} - x \|_1 \geq \gamma \; \& \; \| \Tilde{e} - y \|_1 \leq \gamma} | \mu(\Tilde{e} - y) - \mu(\Tilde{e} - x) | d\Tilde{e},
                    \end{equation*}
                    где $\mu (\Tilde{e} - x)$ и $\mu (\Tilde{e} - y)$ не пересекаются, поэтому, используя это и симметрию интегралов, и определив множество $S = \left\{ \Tilde{e} \in \mathds{R}^d \; | \; \| \Tilde{e} - x \|_1 \leq \gamma \text{ and } \| \Tilde{e} - y \|_1 \geq \gamma \right\}$, получим сумму второго и третьего интегралов
                    \begin{equation}
                    \label{case_3_for_proof_lemma_3}
                        2 \int_{\| \Tilde{e} - x \|_1 \leq \gamma \; \& \; \| \Tilde{e} - y \|_1 \geq \gamma} | \mu(\Tilde{e} - y) - \mu(\Tilde{e} - x) | d\Tilde{e} = \frac{2}{c_d \gamma^d} V_S,
                    \end{equation}
                    где $V_S$~--- объем на множестве $S$.
                    
            \end{enumerate}
            
           
            Суммируя значения четырех интегралов \eqref{case_1_for_proof_lemma_3}-\eqref{case_3_for_proof_lemma_3}, получим:
            \begin{equation}
            \label{eq_1_for_proof_lemma_1}
                \int_{Q_\gamma} | \mu(\Tilde{e} - y) - \mu(\Tilde{e} - x) | d\Tilde{e} = \frac{2}{c_d \gamma^d}V_S,
            \end{equation}
            где $V_S$~--- объем на множестве $S$.
            
            Теперь найдем верхнюю границу $V_S$ \al{через $\| y - x \|_p$}. Пусть $V_{cap} (r)$~--- объем сферического колпака с расстоянием $r$ до центра $l_1$-сферы. Тогда
            \begin{equation}
            \label{eq_2_for_proof_lemma_1}
                V_S = c_d \gamma^d - 2 V_{cap} \left( \frac{\al{\| y - x \|_p}}{\al{4}} \right).
            \end{equation}
            Объем $d$-мерного сферического колпака $V_{cap}$ может быть вычислен через $(d-1)$-мерную $l_1$-сферу следующим образом:
            \begin{equation*}
                V_{cap}(r) = \int_r^\gamma c_{d-1} \left( \gamma - \rho \right)^{d-1} d \rho \;\;\;\;\; \text{ для } r \in [0, \gamma],
            \end{equation*}
            где $c_d = \frac{2^d}{d!}$, $d \geq 1$. Для $r \in [0,\gamma]$ имеем
            \begin{equation*}
            \begin{split}
                V_{cap}'(r) = -c_{d-1}\left( \gamma - r \right)^{d-1} \leq 0, \\
                V_{cap}''(r) = (d-1) c_{d-1} (\gamma - r)^{d-2} \geq 0,
                \end{split}
            \end{equation*}
            где $V_{cap}', \;V_{cap}''$~--- первая и вторая производная по $r$ соответственно. Следовательно, $V_{cap}$ выпукла на $[0, \gamma]$, и по определению субградиента имеем
            \begin{equation*}
                V_{cap}(0) + V_{cap}'(0)r \leq V_{cap}(r)\;\;\;\;\; \text{ для } r \in [0, \gamma].
            \end{equation*}
            Так как $ V_{cap}(0) = \frac{1}{2}c_d \gamma^d$ и $ V_{cap}'(0) = -c_{d-1} \gamma^{d-1}$, отсюда следует, что
            \begin{equation}
            \label{eq_3_for_proof_lemma_1}
                \frac{1}{2}c_d \gamma^d - c_{d-1} \gamma^{d-1} \leq V_{cap}(r)\;\;\;\;\; \text{ для } r \in [0, \gamma].
            \end{equation}
            Поскольку $\al{\| y - x\|_1 }/ 2 \leq \gamma$ \al{ и отмечая, что $\al{\| y - x\|_p }/ 4 \leq \al{\| y - x\|_p }/ 2 \leq \al{\| y - x\|_1 }/ 2 $ выполняется  для $p \in [1,2]$}, можем задать $r = \al{\| y - x\|_p } / \al{4} \leq \gamma$ и подставить в (\ref{eq_3_for_proof_lemma_1}). Сделав это и используя (\ref{eq_2_for_proof_lemma_1}) получим
            \begin{equation*}
                V_S = c_d \gamma^d - 2 V_{cap} \left( \frac{\al{\| y - x\|_p }}{\al{4} } \right) \leq 2 c_{d-1} \gamma^{d-1} \frac{\al{\| y - x\|_p }}{\al{4} }.
            \end{equation*}
            Теперь подставляя полученную оценку $V_S$ в (\ref{eq_1_for_proof_lemma_1}), имеем
            \begin{equation*}
                \int_{Q_\gamma} | \mu(\Tilde{e} - y) - \mu(\Tilde{e} - x) | d\Tilde{e} \leq \frac{ c_{d-1}}{c_d}\frac{\al{\| y - x\|_p }}{\gamma}.
            \end{equation*}
            Поскольку $c_d = \frac{2^d}{d!}$, можно увидеть, что 
            \begin{equation*}
                \int_{Q_\gamma} | \mu(\Tilde{e} - y) - \mu(\Tilde{e} - x) | d\Tilde{e} \leq \frac{d}{\al{2} \gamma} \al{\| y - x\|_p }.
            \end{equation*}
            Теперь, получив оценку для интеграла $I_1$, имеем, что
            \begin{equation*}
                \| \nabla f_\gamma(y) - \nabla f_\gamma(x) \|_q \leq\frac{d M}{\al{2}\gamma} \al{\| y - x\|_p }.
            \end{equation*}
\end{proof}


\subsection{Доказательство леммы \ref{properties_grad_f_gamma}}
\label{proof_properties_grad_f_gamma}
В данном подразделе сосредоточимся на доказательство леммы \ref{properties_grad_f_gamma} для $l_2$-рандомизации. Во многих работах, например в \cite{Gasnikov_2022_Smooth_Scheme, Shamir_2017, Beznosikov_2020}, оценка второго момента для безградиентной аппроксимации \eqref{grad_l2_randomization} приведена и доказана с точностью до некоторой числовой константы $c$. В данном доказательстве разберемся чему же равна эта числовая константа $c$. А подробное доказательство леммы \ref{properties_grad_f_gamma} для $l_1$-рандомизации приведено в лемме 4 \cite{Tsybakov_2022}. 

Для всех $x \in Q$ с предположениями \ref{assumption:SS_1} и \ref{assumption:SS_2} $\nabla f_\gamma(x, \xi, e)$ из \eqref{grad_l2_randomization} имеет нижнюю оценку (второй момент):
\begin{equation*}
   \mathds{E}_{\xi, e} \left[ \| \nabla f_\gamma (x, \xi, e) \|^2_q \right] \leq \kappa(p,d) \left( dM_2^2 + \frac{d^2 \Delta^2}{\sqrt{2} \gamma^2}  \right), 
\end{equation*}
где $1/p + 1/q = 1$ и 
\begin{equation*}
    \kappa(p,d) = \sqrt{2} \min \left\{ q, \ln d \right\} d^{2/q - 1}.
\end{equation*}


\begin{proof}\renewcommand{\qedsymbol}{}
    Давайте рассмотрим 
    \begin{equation}
    \label{eq_1_for_proof_second_moment}
        \begin{split} &  \hspace{-0.1em}
            \mathbb{E}_{\xi, e} \left[ \| \nabla f_\gamma (x, \xi, e) \|_q^2 \right] = \mathbb{E}_{\xi, e} \left[ \left\| \frac{d}{2 \gamma}(f_\delta(x+\gamma e,\xi) - f_\delta(x-\gamma e,\xi)) e \right\|_q^2 \right] = \\ &  \hspace{-0.1em}
            = \frac{d^2}{4 \gamma^2} \mathbb{E}_{\xi, e} \left[ \| e \|_q^2 \left( f(x+\gamma e,\xi) + \delta(x+\gamma e) - f(x-\gamma e,\xi) - \delta(x-\gamma e) \right)^2 \right] \leq \\ &  \hspace{-0.1em}
            \leq \frac{d^2}{2 \gamma^2} \left( \mathbb{E}_{\xi, e} \left[ \| e \|_q^2 \left( f(x+\gamma e,\xi) - f(x-\gamma e,\xi) \right)^2 \right] + \mathbb{E}_{e}  \left[ \| e \|_q^2 \left( \delta(x+\gamma e) - \delta(x-\gamma e) \right)^2 \right] \right), 
        \end{split}
    \end{equation}
    где использовался тот факт, что для всех $a,b, (a + b)^2 \leq 2 a^2 + 2 b^2$. Для первого слагаемого \eqref{eq_1_for_proof_second_moment} выполняется следующее с произвольным параметром $\alpha$ с учетом симметричного распределения $e$
    
    \begin{equation}
    \label{eq_2_for_proof_second_moment}
        \begin{split} &  \hspace{-0.1em}
            \frac{d^2}{2 \gamma^2} \mathbb{E}_{\xi, e} \left[ \| e \|_q^2 \left( f(x+\gamma e,\xi) - f(x-\gamma e,\xi) \right)^2 \right] = 
            \\ &  \hspace{-0.1em}
            = \frac{d^2}{2 \gamma^2} \mathbb{E}_{\xi, e} \left[ \| e \|_q^2 \left( (f(x+\gamma e,\xi) - \alpha) - (f(x-\gamma e,\xi) - \alpha) \right)^2 \right] \leq 
            \\ &  \hspace{-0.1em}
            \leq \frac{d^2}{\gamma^2} \mathbb{E}_{\xi, e} \left[ \| e \|_q^2 \left( f(x+\gamma e,\xi) - \alpha \right)^2 + \left( f(x-\gamma e,\xi) - \alpha \right)^2 \right] = 
            \\ &  \hspace{-0.1em}
            = \frac{d^2}{\gamma^2} \left( \mathbb{E}_{\xi, e} \left[ \| e \|_q^2 \left( f(x+\gamma e,\xi) - \alpha \right)^2 \right] + \mathbb{E}_{\xi, e} \left[ \left( f(x-\gamma e,\xi) - \alpha \right)^2 \right] \right) =
            \\ &  \hspace{-0.1em}
            = \frac{2 d^2}{\gamma^2} \mathbb{E}_{\xi, e} \left[ \| e \|_q^2 \left( f(x+\gamma e,\xi) - \alpha \right)^2 \right].
        \end{split}
    \end{equation}
    Применяя неравенство Коши--Шварца для \eqref{eq_2_for_proof_second_moment} и используя $\sqrt{\mathbb{E} \left[ \| e \|_q^4 \right]} \leq \kappa'(p,d)$, где $\kappa'(p,d) = \min \left\{ q, \ln d \right\} d^{2/q - 1}$, получим
    \begin{equation}
        \label{eq_3_for_proof_second_moment}
        \begin{split}
            \frac{2 d^2}{\gamma^2} \mathbb{E}_{\xi, e} \left[ \| e \|_q^2 \left( f(x+\gamma e,\xi) - \alpha \right)^2 \right] \leq \frac{2 d^2}{\gamma^2} \mathbb{E}_{\xi} \left[ \sqrt{\mathbb{E} \left[ \| e \|_q^4 \right]} \sqrt{ \mathbb{E}_e \left[ \left( f(x+\gamma e,\xi) - \alpha \right)^4 \right]} \right] \leq 
            \\ &  \hspace{-22.5em}
            \leq \frac{2 d^2 \kappa'(p,d)}{\gamma^2} \mathbb{E}_{\xi} \left[ \sqrt{ \mathbb{E}_e \left[ \left( f(x+\gamma e,\xi) - \alpha \right)^4 \right]} \right].
        \end{split}
    \end{equation}
     Следующая лемма, которую рассмотрим, является уточнением леммы 9 \cite{Shamir_2017} с указанием числовой константы $c$.
     
    \begin{lemma}
        \label{Theorem_Shamir}
        Для любой функции $f(e)$, которая является $M$-Липшицевой относительно $l_2$-нормы, имеет место, что если $e$ равномерно распределен на евклидовой сфере, тогда
        \begin{equation*}
            \sqrt{\mathbb{E} \left[ \left( f(e) - \mathbb{E} \left[ f(e) \right] \right)^4  \right]} \leq \frac{M_2^2}{\sqrt{2} d}.
        \end{equation*}
    \end{lemma}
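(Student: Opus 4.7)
Опишу предполагаемую схему доказательства леммы \ref{Theorem_Shamir}. Обозначим для краткости $X := f(e) - \mathbb{E}[f(e)]$, цель --- получить $\mathbb{E}[X^4] \leq M_2^4/(2d^2)$.

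План --- воспользоваться концентрацией меры на евклидовой сфере и затем проинтегрировать получающуюся хвостовую оценку, чтобы извлечь четвертый центральный момент. Первым шагом я бы применил неравенство Леви (или эквивалентное ему неравенство, получаемое из логарифмической неравенства Соболева на $S^{d-1}$ с константой $1/(d-1)$ по аргументу Герба): для $e$, равномерно распределенного на $S^{d-1} \subset \mathbb{R}^d$, и функции $f$, являющейся $M_2$-липшицевой в $l_2$-норме, справедливо субгауссовское неравенство концентрации
\begin{equation*}
    \mathbb{P}\bigl(|X| \geq t\bigr) \leq 2 \exp\!\bigl(-c\, d\, t^2/M_2^2\bigr),\qquad t>0,
\end{equation*}
с явной константой $c>0$.

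Вторым шагом я бы перешёл от хвостовой оценки к четвертому моменту через представление layer-cake:
\begin{equation*}
    \mathbb{E}[X^4] = \int_0^\infty 4 t^3\, \mathbb{P}(|X|\geq t)\, dt \leq 8 \int_0^\infty t^3 \exp\!\bigl(-c\,d\,t^2/M_2^2\bigr)\, dt.
\end{equation*}
Заменой $u = c\,d\,t^2/M_2^2$ правая часть сводится к гамма-интегралу $\Gamma(2) = 1$, что даёт $\mathbb{E}[X^4] \leq 4 M_2^4/(cd)^2$, то есть $\sqrt{\mathbb{E}[X^4]} \leq 2 M_2^2/(c d)$. Эта оценка уже устанавливает правильную асимптотику $O(M_2^2/d)$, и эквивалентна тому виду, в котором лемма 9 из \cite{Shamir_2017} формулируется с неопределённой константой $c$.

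Третий и наиболее тонкий шаг --- выжать конкретный численный префактор $1/\sqrt{2}$. Стандартное хвостовое интегрирование даёт лишь правильный порядок, но с более грубой константой; чтобы довести её до заявленной, я бы заменил указанный путь одним из двух более аккуратных вариантов: (i) чернофская оптимизация экспоненциальных моментов, в которой из логарифмического неравенства Соболева на сфере выводится $\mathbb{E}[e^{\lambda X}] \leq \exp(\lambda^2 M_2^2/(2(d-1)))$, а четвёртый момент затем извлекается минимизацией по $\lambda$; или (ii) разложение $f$ по сферическим гармоникам с оценкой $\mathbb{E}[X^4]$ через неравенство Пуанкаре $\mathrm{Var}(f) \leq M_2^2/(d-1)$ и гиперконтрактивность оператора Лапласа--Бельтрами. Именно этот шаг представляется основным препятствием: получение правильного порядка по $d$ --- рутинная операция, тогда как фиксация конкретной константы $1/\sqrt{2}$ требует аккуратного отслеживания всех неравенств, использовавшихся в оригинальном доказательстве Шамира, и замены каждой неопределённой константы её точным значением.
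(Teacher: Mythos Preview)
Your first two steps --- субгауссова концентрация на сфере плюс хвостовое интегрирование для извлечения $\mathbb{E}[X^4]$ --- это ровно то, что делает статья (она лишь записывает layer-cake в эквивалентной форме $\mathbb{E}[X^4]=\int_0^\infty \Pr(X^4>t)\,dt$ вместо $\int_0^\infty 4t^3\Pr(|X|>t)\,dt$). Расхождение --- только в третьем шаге.

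Ты считаешь, что выжать конкретную константу $1/\sqrt{2}$ нельзя простым хвостовым интегрированием и предлагаешь обходные пути через чернофскую оптимизацию с log-Sobolev или гиперконтрактивность сферических гармоник. Это лишнее. В статье никаких таких инструментов нет: константа получается прямо, если в твоём неравенстве $\Pr(|X|\ge t)\le 2\exp(-c\,d\,t^2/M_2^2)$ зафиксировать конкретное значение $c$. Для этого авторы берут известное неравенство Леви относительно \emph{медианы} $\Pr(|f-M_f|>t)\le 4\exp\bigl(-dt^2/(2M_2^2)\bigr)$ (см.\ \cite{Ledoux_2005}), постулируют для версии относительно \emph{среднего} вид $\Pr(|f-\mathbb{E}f|>t)\le K\exp(-\eta t^2)$, связывают их соотношением $\Pr(|f-M_f|>t)\le 2K\exp(-\eta t^2/4)$ и, сравнивая, находят $K=2$, $\eta=2d/M_2^2$, то есть $c=2$. После этого обычный layer-cake интеграл $\int_0^\infty 2\exp(-2d\sqrt{t}/M_2^2)\,dt$ вычисляется через $\int_0^\infty e^{-\sqrt{x}}\,dx=2$ и сразу даёт заявленную оценку. Таким образом, \og главное препятствие\fg{}, которое ты видишь, в статье решается не более тонким аналитическим аппаратом, а просто явным отслеживанием констант в стандартном неравенстве концентрации при переходе от медианы к среднему.
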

    \begin{proof}\renewcommand{\qedsymbol}{}
        Пусть неравенство концентрации меры, использующее математическое ожидание, имеет вид
        \begin{equation*}
            \Pr ( |f - \mathbb{E} f| > t) \leq K \exp (-\eta t^2),
        \end{equation*}
        где $K$ и $\eta$ неизвестные параметры. Тогда, чтобы найти параметры $K$ и $\eta$, выпишем неравенство, использующую медиану функции используя параметры $K$ и $\eta$ (так как в литературе, например в \cite{Ledoux_2005},  обычно записывается неравенство концентрации меры на сфере, используя медиану функции $M_f$, а не математическое ожидание): 
        \begin{equation}
        \label{shamir_conect}
            \Pr ( |f - M_f| > t) \leq 2 K \exp (-\eta t^2/4) = 4 \exp (- d t^2 / 2 M_2^2).
        \end{equation}
        Подставляя параметры $K = 2$ и $\eta = 2 d / M_2^2$ из неравенства \eqref{shamir_conect}, запишем стандартный результат о концентрации липцицевых функций на евклидовой единичной сфере
        \begin{equation*}
            \Pr (|f(e) - \mathbb{E} \left[ f(e) \right] | > t) \leq 2 \exp (-2 d t^2 / M_2^2).
        \end{equation*}
        Следовательно 
        \begin{align*} &  \hspace{-0.1em}
            \begin{split}
                \sqrt{\mathbb{E} \left[ \left( f(e) - \mathbb{E} \left[ f(e) \right] \right)^4  \right]} = \sqrt{\int_{t=0}^\infty \Pr \left( \left( f(e) - \mathbb{E} \left[ f(e) \right] \right)^4 > t \right) dt} = \\  &  \hspace{-28.1em}
                = \sqrt{\int_{t=0}^\infty \Pr \left( \left| f(e) - \mathbb{E} \left[ f(e) \right] \right| > \sqrt[4]{t} \right) dt} \leq \sqrt{\int_{t=0}^\infty 2 \exp \left( - \frac{2d\sqrt{t}}{M_2^2} \right) dt} = \sqrt{2 \frac{M_2^4}{(2d)^2}},
            \end{split}
        \end{align*}
        где в последнем шаге использовался факт, что $\int_{t=0}^\infty \exp (-\sqrt{x})dx = 2$. Таким образом, найдена числовая константа $c = \frac{1}{\sqrt{2}}$ из леммы 9 \cite{Shamir_2017}. 
    \end{proof}
    
    Затем используем лемму \ref{Theorem_Shamir} вместе с тем фактом, что $f(x + \gamma e, \xi )$ является $\gamma M_2(\xi)$-Липшицевой относительно $e$ с точки зрения $l_2$-нормы. Таким образом, для \eqref{eq_3_for_proof_second_moment} и $\alpha := \mathbb{E} \left[ f(x + \gamma e, \xi) \right]$ выполняется
    \begin{equation}
        \label{eq_4_for_proof_second_moment}
        \frac{2 d^2 \kappa'(p,d)}{\gamma^2} \mathbb{E}_{\xi} \left[ \sqrt{ \mathbb{E}_e \left[ \left( f(x+\gamma e,\xi) - \alpha \right)^4 \right]} \right] \leq \frac{2 d^2 \kappa'(p,d)}{\gamma^2} \cdot \frac{\gamma^2 \mathbb{E} \left[ M_2^2(\xi) \right]}{\sqrt{2} d} = \sqrt{2} \kappa'(p,d) d M_2^2. 
    \end{equation}
    Для второго слагаемого \eqref{eq_1_for_proof_second_moment} выполняется следующее:
    \begin{equation}
        \label{eq_5_for_proof_second_moment}
        \frac{d^2}{2 \gamma^2} \mathbb{E}_{e}  \left[ \| e \|_q^2 \left( \delta(x+\gamma e) - \delta(x-\gamma e) \right)^2 \right] \leq \frac{d^2 \Delta^2}{\gamma^2} \mathbb{E}_{e}  \left[ \| e \|_q^2 \right] \leq \frac{\kappa'(p,d) d^2 \Delta^2}{\gamma^2}.
    \end{equation}
    Подставляя \eqref{eq_4_for_proof_second_moment} и \eqref{eq_5_for_proof_second_moment} в неравенство \eqref{eq_1_for_proof_second_moment} и занося коэффициент $\sqrt{2}$ в $\kappa'(p,d)$, получим утверждение леммы \ref{properties_grad_f_gamma} для $l_2$-рандомизации c $\kappa(p,d) = \sqrt{2} \kappa'(p,d) = \sqrt{2} \min \left\{ q, \ln d \right\} d^{2/q - 1}$.

\end{proof}

\subsection{Доказательство леммы \ref{properties_grad_f_gamma_one_point}}
\label{proof_properties_grad_f_gamma_one_point}
В данном подразделе рассмотрим краткое доказательство леммы \ref{properties_grad_f_gamma_one_point} для случая с $l_1$-рандомизацией. С евклидовым случаем можно познакомиться в следующих работах \cite{Gasnikov_2022_Smooth_Scheme, Gasnikov_2017}.

Для всех $x \in Q$ с предположениями \ref{assumption:SS_2} и \ref{assumption:SS_3} $\nabla f_\gamma(x, \xi, e)$ из \eqref{grad_l1_randomization_one_point} имеет нижнюю оценку (второй момент):
\begin{equation*}
    \mathds{E}_e \left[ \| \nabla f_\gamma (x, \xi, e) \|^2_q \right] \leq \frac{d^{4 - \frac{2}{p}}}{\gamma^2} \left( G^2 + \Delta^2 \right),
\end{equation*}
где $1/p + 1/q = 1$.
\begin{proof}\renewcommand{\qedsymbol}{}
    Доказательство этой леммы будет базироваться на доказательстве леммы 4 из \cite{Tsybakov_2022}. Используя определение $\nabla f_\gamma(x, \xi, e)$, получим:
    \begin{equation*}
        \begin{split}
           \mathds{E} \left[ \| \nabla f_\gamma (x, \xi, e) \|^2_q \right] = & \; \frac{d^2}{\gamma^2} \mathds{E} \left[ (f(x + \gamma e) + \delta(x))^2 \| \text{sign}(e) \|_q^2 \right] = \\
           = & \; \frac{d^{4 - \frac{2}{p}}}{\gamma^2} \mathds{E} \left[ (f(x + \gamma e) + \delta(x))^2 \right].
        \end{split}
    \end{equation*}
    Далее с предположениями \ref{assumption:SS_2} и \ref{assumption:SS_3} получим утверждение леммы:
    \begin{equation*}
        \begin{split}
            \mathds{E} \left[ \| \nabla f_\gamma (x, \xi, e) \|^2_q \right] = & \;  \frac{d^{4 - \frac{2}{p}}}{\gamma^2} \mathds{E} \left[ (f(x + \gamma e) + \delta(x))^2 \right] \leq \frac{d^{4 - \frac{2}{p}}}{\gamma^2} \left( G^2 + \Delta^2 \right).
         \end{split}
    \end{equation*}
\end{proof}


\subsection{Оценки уровня неточности}
\label{estimate_noise}
В данном подразделе приведем необходимые леммы и следствия для нахождения двух оценок уровня неточности (враждебного шума): для $l_1$-рандомизация (\ref{grad_l1_randomization}) и для $l_2$-рандомизация (\ref{grad_l2_randomization}). Для этого воспользуемся известными результатами и теми же суждениями, что и в \cite{Dvinskikh_2022}.
\begin{lemma}[см. \cite{Tsybakov_2022}]
    \label{lemma_1_proof_noise_l1_randomization}
    Функция $f_\gamma (x)$ дифференцируема со следующим градиентом с $l_1$-рандомизацией:
    \begin{equation*}
        \nabla f_\gamma (x) =\mathds{E}_e \left[ \frac{d}{\gamma}f(x+\gamma e) \text{sign}(e) \right].   
    \end{equation*}
\end{lemma}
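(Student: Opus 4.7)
Идея состоит в применении теоремы Гаусса--Остроградского (о дивергенции) к $l_1$-шару. По определению сглаживания
\[
    f_\gamma(x) = \mathbb{E}_{\Tilde{e}}[f(x+\gamma \Tilde{e})] = \frac{1}{V(B_1^d(\gamma))} \int_{B_1^d(\gamma)} f(x+u)\, du.
\]
Предположим сначала, что $f \in C^1$; общий случай выпуклой липшицевой $f$ сводится к этому приближением свёрткой с гладким ядром и предельным переходом (оба корректны в силу предположения \ref{assumption:SS_1}). Дифференцируя под знаком интеграла покомпонентно, имеем
\[
    \partial_i f_\gamma(x) = \frac{1}{V(B_1^d(\gamma))} \int_{B_1^d(\gamma)} \partial_i f(x+u)\, du.
\]

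Применим теорему о дивергенции к векторному полю, у которого $i$-я компонента равна $f(x+u)$, а остальные равны нулю. Граница $\partial B_1^d(\gamma)$ есть объединение $2^d$ $(d-1)$-мерных симплексов; на относительной внутренности каждой такой грани евклидова внешняя единичная нормаль равна $n(u) = \frac{1}{\sqrt{d}}\,\text{sign}(u)$, а множество рёбер и вершин имеет $(d-1)$-меру нуль и не вносит вклада в поверхностный интеграл. Получаем
\[
    \int_{B_1^d(\gamma)} \partial_i f(x+u)\, du = \frac{1}{\sqrt{d}}\int_{S_1^d(\gamma)} f(x+u)\, \text{sign}(u_i)\, dS(u).
\]

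Остаётся перейти от поверхностного интеграла к математическому ожиданию по $e\sim\operatorname{Unif}(S_1^d(1))$. Каждая из $2^d$ граней единичной $l_1$-сферы --- это $(d-1)$-мерный симплекс с $(d-1)$-мерным объёмом $\sqrt{d}/(d-1)!$ (напрямую через определитель Грама с $G = I_{d-1}+\mathbf{1}\mathbf{1}^T$ и $\det G = d$). Поэтому полная площадь $A_\gamma := \mathrm{Area}(S_1^d(\gamma)) = \frac{2^d\sqrt{d}\,\gamma^{d-1}}{(d-1)!}$, а $V(B_1^d(\gamma)) = \frac{2^d\gamma^d}{d!}$, откуда нормировочный множитель даёт $\dfrac{A_\gamma}{\sqrt{d}\,V(B_1^d(\gamma))} = \dfrac{d}{\gamma}$. Замена $u = \gamma e$ приводит к
\[
    \partial_i f_\gamma(x) = \frac{d}{\gamma}\,\mathbb{E}_e\bigl[f(x+\gamma e)\,\text{sign}(e_i)\bigr],
\]
что, собранное покомпонентно, даёт требуемую формулу. Дифференцируемость $f_\gamma$ вытекает из непрерывности правой части по $x$ (по теореме Лебега о мажорируемой сходимости, так как $f$ ограничена на компактах).

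\textbf{Основная трудность.} Главное препятствие --- строгое обоснование теоремы о дивергенции на многогранной области с кусочно-гладкой границей (нужно формально проверить, что вершины и рёбра $B_1^d(\gamma)$ не вносят вклада) и аккуратная редукция от $C^1$-случая к выпуклой липшицевой $f$ через приближение свёртками. Также требует внимания вычисление нормировочного коэффициента $d/\gamma$ через явные объёмы симплексных граней.
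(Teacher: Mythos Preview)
The paper does not prove this lemma; it is stated with a bare citation to \cite{Tsybakov_2022}. Your argument via the divergence theorem on the $l_1$-ball is correct --- the outward normal $n(u)=\text{sign}(u)/\sqrt{d}$ on the open faces, the face area $\sqrt{d}\,\gamma^{d-1}/(d-1)!$, and the resulting ratio $A_\gamma/(\sqrt{d}\,V(B_1^d(\gamma)))=d/\gamma$ are all right --- and it is exactly the Stokes-type derivation the paper itself alludes to at the start of Section~\ref{section:Smooth_schemes} as the mechanism behind these gradient formulas, so there is nothing to contrast.
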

\begin{lemma}[см. \cite{Flaxman_2005}]
    \label{lemma_1_proof_noise_l2_randomization}
    Функция $f_\gamma (x)$ дифференцируема со следующим градиентом с $l_2$-рандомизацией:
    \begin{equation*}
        \nabla f_\gamma (x) =\mathds{E}_e \left[ \frac{d}{\gamma}f(x+\gamma e)e \right].   
    \end{equation*}
\end{lemma}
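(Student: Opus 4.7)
Основной подход -- применение теоремы о дивергенции (формулы Стокса) для перевода интеграла по $l_2$-шару в поверхностный интеграл по его границе. Начну с записи усреднения в явном виде: $f_\gamma(x) = \frac{1}{V(B_2^d(\gamma))} \int_{B_2^d(\gamma)} f(x + u)\, du$. Выполнив замену переменной $y = x + u$, переведу область интегрирования в сдвинутый шар $x + B_2^d(\gamma)$. Тогда дифференцирование по $x$ -- это дифференцирование по сдвигу области; по формуле Рейнольдса (для движущейся области с постоянной скоростью $h$) оно сводится к потоку $f$ через границу $\partial(x + B_2^d(\gamma))$ с внешней единичной нормалью $\nu(y) = (y - x)/\gamma$, что даёт $\nabla f_\gamma(x) = \frac{1}{V(B_2^d(\gamma))} \int_{\partial(x + B_2^d(\gamma))} f(y)\, \nu(y)\, dS(y)$.

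Далее я подставлю $s = y - x = \gamma e$, где $e$ пробегает единичную $l_2$-сферу $S_2^d(1)$, с поверхностным элементом $dS(s) = \gamma^{d-1} dS(e)$. Это приведёт к $\nabla f_\gamma(x) = \frac{1}{V(B_2^d(\gamma))} \int_{S_2^d(\gamma)} f(x + s)\frac{s}{\gamma}\, dS(s)$. Применив стандартное соотношение $\mathrm{Area}(S_2^d(\gamma)) / V(B_2^d(\gamma)) = d/\gamma$ в $\mathbb{R}^d$ и перейдя к математическому ожиданию по равномерному распределению $e$ на $S_2^d(1)$, получу требуемое равенство $\nabla f_\gamma(x) = \frac{d}{\gamma}\, \mathds{E}_e [f(x + \gamma e)\, e]$.

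\textbf{Основное препятствие.} Главная тонкость в том, что $f$ предполагается лишь липшицевой (и выпуклой), а не гладкой, так что прямое применение теоремы Стокса формально не обосновано. Обойти эту трудность планирую стандартным приёмом регуляризации: аппроксимирую $f$ гладкими функциями $f_\varepsilon$ через свёртку с приближением единицы, для которых тождество следует из классической формулы Грина, а затем перейду к пределу при $\varepsilon \to 0$, пользуясь равномерной липшицевостью $f_\varepsilon$ и равномерной сходимостью на компактах. Альтернативный путь -- прямая проверка дифференцируемости $f_\gamma$ по определению: разность $f_\gamma(x+h) - f_\gamma(x)$ выражается через интеграл по симметрической разности $B_2^d(\gamma) \triangle (B_2^d(\gamma) - h)$, мера которой линейна по $h$ с точностью $o(\|h\|)$, и главный член явно превращается в поверхностный интеграл с нормалью $s/\gamma$ на $S_2^d(\gamma)$, что и даёт формулу из утверждения леммы.
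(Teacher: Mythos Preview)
The paper does not give its own proof of this lemma: it is stated with a reference to \cite{Flaxman_2005} and used as a black box. Your proposed argument via the divergence theorem (rewrite $f_\gamma$ as an average over $B_2^d(\gamma)$, differentiate the translation to obtain a flux integral over $S_2^d(\gamma)$ with normal $s/\gamma$, then use $\mathrm{Area}(S_2^d(\gamma))/V(B_2^d(\gamma))=d/\gamma$) is exactly the classical derivation, and coincides with the approach in the cited source; the mollification step you outline is the standard way to cover the merely Lipschitz case, so the plan is correct and complete.
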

\begin{lemma}[см.\cite{Dvurechensky_2021}]
    \label{lemma_2_proof_noise_l2_randomization}
    Пусть вектор $e$ случайный единичный вектор из евклидовой единичной сферы $\{ e: \| e \|_2 = 1  \}$. Тогда для всех $r \in \mathds{R}^d$ следует
    \begin{equation*}
         \mathds{E} [ | \langle e, r \rangle | ] \leq  \frac{\| r \|_2}{ \sqrt{d}} .
    \end{equation*}
\end{lemma}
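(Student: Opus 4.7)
Идея состоит в том, чтобы свести оценку первого момента $\mathbb{E}[|\langle e, r\rangle|]$ к вычислению второго момента с помощью неравенства Йенсена, а затем воспользоваться симметриями равномерного распределения на евклидовой сфере.

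Первым шагом применю неравенство Йенсена (что эквивалентно в данном случае неравенству Коши--Буняковского) к вогнутой функции квадратного корня:
\begin{equation*}
    \mathbb{E}[|\langle e, r \rangle|] \;=\; \mathbb{E}\bigl[\sqrt{\langle e, r \rangle^2}\bigr] \;\leq\; \sqrt{\mathbb{E}[\langle e, r \rangle^2]}.
\end{equation*}
Таким образом, задача сведётся к вычислению $\mathbb{E}[\langle e, r \rangle^2] = \sum_{i,j=1}^{d} r_i r_j\, \mathbb{E}[e_i e_j]$.

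Далее использую два свойства симметрии равномерной меры на евклидовой единичной сфере. Инвариантность относительно отражений отдельных координат (распределение $e$ совпадает с распределением $(e_1, \dots, -e_j, \dots, e_d)$) даёт $\mathbb{E}[e_i e_j] = 0$ при $i \neq j$. Инвариантность относительно перестановок координат вместе с тождеством $\sum_{i=1}^d \mathbb{E}[e_i^2] = \mathbb{E}[\|e\|_2^2] = 1$ приводит к $\mathbb{E}[e_i^2] = 1/d$. Подставив эти значения, получу $\mathbb{E}[\langle e, r \rangle^2] = \|r\|_2^2/d$, откуда и будет немедленно следовать нужная оценка $\mathbb{E}[|\langle e, r \rangle|] \leq \|r\|_2/\sqrt{d}$.

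Альтернативный путь (по сути эквивалентный, но более короткий записью) --- воспользоваться вращательной инвариантностью распределения $e$: поворотом системы координат без ограничения общности считаем $r = \|r\|_2 \cdot (1,0,\dots,0)^\top$, после чего задача сводится к оценке $\mathbb{E}[|e_1|] \leq \sqrt{\mathbb{E}[e_1^2]} = 1/\sqrt{d}$. Серьёзных препятствий в доказательстве не ожидается: единственный технический момент --- аккуратное обоснование отмеченных свойств симметрии равномерной меры на сфере, но это стандартный факт, поэтому всё доказательство оказывается очень коротким.
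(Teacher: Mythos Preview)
Your argument is correct and is the standard proof of this elementary fact: Jensen's inequality reduces the first moment to the square root of the second moment, and the second moment $\mathbb{E}[\langle e,r\rangle^2]=\|r\|_2^2/d$ follows immediately from the symmetry (or rotational invariance) of the uniform distribution on the sphere. Note, however, that the paper does not supply its own proof of this lemma at all --- it simply states the result with a reference to \cite{Dvurechensky_2021} --- so there is nothing to compare against; your self-contained derivation is a fine addition.
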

\begin{lemma}
    \label{lemma_3_proof_noise_lp_randomization}
    Для $ \nabla f_\gamma (x,\xi,e) $ и $\nabla f_\gamma (x)$ с предположением \ref{assumption:SS_2}, выполняется следующее.
    \begin{equation*}
         \mathds{E}_{\xi, e} [\langle \nabla f_\gamma (x,\xi,e), r \rangle] \geq \dotprod{ \nabla f_\gamma (x)}{r} - \frac{d \Delta \mathds{E}_{e} [ | \langle \text{sign}(e), r \rangle | ]}{\gamma},
    \end{equation*}
    где $\nabla f_\gamma$ с $l_1$-рандомизацией;
    
    \begin{equation*}
         \mathds{E}_{\xi, e} [\langle \nabla f_\gamma (x,\xi,e), r \rangle] \geq \dotprod{ \nabla f_\gamma (x)}{r} - \frac{d \Delta \mathds{E}_{e} [ | \langle e, r \rangle | ]}{\gamma},
    \end{equation*}
    где $\nabla f_\gamma$ с $l_2$-рандомизацией.
\end{lemma}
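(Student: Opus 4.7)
План доказательства состоит в том, чтобы разложить оракул $\nabla f_\gamma(x,\xi,e)$ на <<чистую>> часть, связанную с $f(x,\xi)$, и <<шумовую>> часть, связанную с $\delta$, после чего воспользоваться симметрией распределения вектора $e$ и леммами~\ref{lemma_1_proof_noise_l1_randomization}, \ref{lemma_1_proof_noise_l2_randomization} для первой из них и предположением~\ref{assumption:SS_2} об ограниченности шума для второй.

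Первым шагом я бы подставил равенство $f_\delta(y,\xi) = f(y,\xi) + \delta(y)$ в формулы \eqref{grad_l1_randomization} и \eqref{grad_l2_randomization}, взял скалярное произведение с произвольным вектором $r$ и усреднил по $\xi$. В чистой части возникает разность $f(x+\gamma e) - f(x-\gamma e)$, умноженная на $\langle \text{sign}(e), r\rangle$ (соответственно на $\langle e, r\rangle$). Поскольку распределение $e$ симметрично относительно замены $e\to -e$, а $\text{sign}(-e) = -\text{sign}(e)$, получаем равенство $\mathds{E}_e [f(x-\gamma e)\langle \text{sign}(e), r\rangle] = -\mathds{E}_e [f(x+\gamma e)\langle \text{sign}(e), r\rangle]$; аналогично для $l_2$-случая. Это <<сворачивает>> двухточечную формулу в одноточечную, и по леммам~\ref{lemma_1_proof_noise_l1_randomization}, \ref{lemma_1_proof_noise_l2_randomization} соответствующее выражение в точности равно $\dotprod{\nabla f_\gamma(x)}{r}$.

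Для шумовой части я бы применил неравенство $|\mathds{E}[X]| \leq \mathds{E}[|X|]$ вместе с предположением~\ref{assumption:SS_2}, из которого следует $|\delta(x+\gamma e) - \delta(x-\gamma e)| \leq 2\Delta$. Это даёт оценку сверху на модуль шумового вклада величиной $\frac{d\Delta}{\gamma}\mathds{E}_e[|\langle \text{sign}(e), r\rangle|]$ для $l_1$-рандомизации (соответственно $\frac{d\Delta}{\gamma}\mathds{E}_e[|\langle e, r\rangle|]$ для $l_2$). Объединяя точное равенство для чистой части с односторонней оценкой для шумовой, получим искомое неравенство со знаком минус. Существенных препятствий я здесь не ожидаю; единственный аккуратный момент --- это корректное использование симметрии $e\to -e$ для сведения двухточечной разности к одноточечному представлению лемм~\ref{lemma_1_proof_noise_l1_randomization}, \ref{lemma_1_proof_noise_l2_randomization} без потери знака, а также то, что $\delta$ зависит только от $x$ (не от $\xi$), благодаря чему слагаемое с $\delta$ корректно выносится из $\mathds{E}_\xi$. Рассуждения для $l_1$- и $l_2$-рандомизаций отличаются только заменой $\text{sign}(e)$ на $e$.
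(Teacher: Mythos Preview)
Предложенное доказательство корректно и по существу совпадает с доказательством в статье: там тоже производится разложение $f_\delta=f+\delta$, для чистой части через симметрию $e\to -e$ двухточечная разность сводится к одноточечному представлению лемм~\ref{lemma_1_proof_noise_l1_randomization} и~\ref{lemma_1_proof_noise_l2_randomization}, а шумовая часть оценивается через $|\delta(x+\gamma e)-\delta(x-\gamma e)|\leq 2\Delta$.
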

\begin{proof}\renewcommand{\qedsymbol}{}
	Рассмотрим
	\begin{itemize}
        \item[i)] для $l_1$-рандомизации (\ref{grad_l1_randomization}):
        	\begin{equation*}
        	\begin{split}
        	    \nabla f_\gamma (x,\xi,e) = \frac{d}{2\gamma}(f_\delta(x+\gamma e, \xi) - f_\delta(x-\gamma e, \xi))\text{sign}(e) = \\ &  \hspace{-17.8em}
        	    = \frac{d}{2\gamma}(f(x+\gamma e, \xi) + \delta(x+\gamma e) - f(x-\gamma e, \xi) - \delta(x-\gamma e))\text{sign}(e) =\\ &  \hspace{-17.8em}
        	    = \frac{d}{2\gamma}((f(x+\gamma e, \xi) - f(x-\gamma e, \xi))\text{sign}(e) + (\delta(x+\gamma e)  - \delta(x-\gamma e))\text{sign}(e)).
        	\end{split}
        	\end{equation*}
        	Из данного равенства следует 
        	\begin{equation}
        	\label{eq_1_for_proof_noise_l1_randomization}
        	\begin{split}
        	    \mathds{E}_{\xi, e} [\langle \nabla f_\gamma (x,\xi,e), r \rangle] = \frac{d}{2\gamma} \mathds{E}_{\xi, e} [\langle (f(x+\gamma e, \xi) - f(x-\gamma e, \xi))\text{sign}(e), r \rangle] + \\ 
        	    + \frac{d}{2\gamma} \mathds{E}_{ e} [\langle (\delta(x+\gamma e) - \delta(x-\gamma e))\text{sign}(e), r \rangle].
        	\end{split}
        	\end{equation}
        	Применяя лемму \ref{lemma_1_proof_noise_l1_randomization} к первому слагаемому (\ref{eq_1_for_proof_noise_l1_randomization}), получим
        	\begin{equation}
            	\begin{split}
            	\label{eq_2_for_proof_noise_l1_randomization} &  \hspace{-0.1em} 
            	    \frac{d}{2\gamma} \mathds{E}_{\xi, e} [\langle (f(x+\gamma e, \xi) - f(x-\gamma e, \xi))\text{sign}(e), r \rangle] = 
            	    \\ &  \hspace{-0.1em}
            	    = \frac{d}{2\gamma} \mathds{E}_{\xi, e} [\langle f(x+\gamma e, \xi)\text{sign}(e), r \rangle] + \frac{d}{2\gamma} \mathds{E}_{\xi, e} [\langle f(x-\gamma e, \xi)\text{sign}(e), r \rangle] = 
            	    \\  &  \hspace{-0.1em}
    	            = \frac{d}{\gamma} \mathds{E}_{e} \left[ \dotprod{\mathds{E}_{\xi} \left[ f(x+\gamma e, \xi) \right] \text{sign}(e)}{r} \right] = \frac{d}{\gamma} \mathds{E}_{e} [\langle f(x+\gamma e)\text{sign}(e), r \rangle] = \dotprod{\nabla f_\gamma(x)}{r}.  
            	\end{split}
        	\end{equation}
        	Для второго слагаемого (\ref{eq_1_for_proof_noise_l1_randomization}) с учетом $|\delta(x)|\leq \Delta$ получим
        	\begin{equation}
        	\label{eq_3_for_proof_noise_l1_randomization}
            \begin{split}
        	    \frac{d}{\gamma} \mathds{E}_{e} [\langle (\delta(x+\gamma e) - \delta(x-\gamma e))\text{sign}(e), r \rangle] \geq  -\frac{d}{2\gamma} 2\Delta \mathds{E}_{e} [ | \langle \text{sign}(e), r \rangle | ] = 
        	    \\
        	    = -\frac{d}{\gamma} \Delta \mathds{E}_{e} [ | \langle \text{sign}(e), r \rangle | ].
        	\end{split}
        	\end{equation}
        	Используя уравнения (\ref{eq_2_for_proof_noise_l1_randomization}) и (\ref{eq_3_for_proof_noise_l1_randomization}) для уравнения (\ref{eq_1_for_proof_noise_l1_randomization}), получим утверждение леммы для $l_1$-рандомизации.
    	
    	\item[ii)] для $l_2$-рандомизации (\ref{grad_l2_randomization}):
    	\begin{equation*}
    	\begin{split}
    	    \nabla f_\gamma (x,\xi,e) = \frac{d}{2\gamma}(f_\delta(x+\gamma e, \xi) - f_\delta(x-\gamma e, \xi))e = \\ &  \hspace{-15.2em}
    	    = \frac{d}{2\gamma}(f(x+\gamma e, \xi) + \delta(x+\gamma e) - f(x-\gamma e, \xi) - \delta(x-\gamma e))e =\\ &  \hspace{-15.2em}
    	    = \frac{d}{2\gamma}((f(x+\gamma e, \xi) - f(x-\gamma e, \xi))e + (\delta(x+\gamma e)  - \delta(x-\gamma e))e).
    	\end{split}
    	\end{equation*}
    	Из данного равенства следует 
    	\begin{equation}
        \label{eq_1_for_proof_noise_l2_randomization}
            \begin{split}
                \mathds{E}_{\xi, e} [\langle \nabla f_\gamma (x,\xi,e), r \rangle] = \frac{d}{2\gamma} \mathds{E}_{\xi, e} [\langle (f(x+\gamma e, \xi) - f(x-\gamma e, \xi))e, r \rangle] + \\ 
                + \frac{d}{2\gamma} \mathds{E}_{e} [\langle (\delta(x+\gamma e) - \delta(x-\gamma e))e, r \rangle].
            	\end{split}
        	\end{equation}
        	Применяя лемму \ref{lemma_1_proof_noise_l2_randomization} к первому слагаемому (\ref{eq_1_for_proof_noise_l2_randomization}), получим
        	\begin{equation}
            	\begin{split}
            	\label{eq_2_for_proof_noise_l2_randomization} &  \hspace{-0.1em}
            	    \frac{d}{2\gamma} \mathds{E}_{\xi, e} [\langle (f(x+\gamma e, \xi) - f(x-\gamma e, \xi))e, r \rangle] = 
            	    \\ &  \hspace{-0.1em}
            	    = \frac{d}{2\gamma} \mathds{E}_{\xi, e} [\langle f(x+\gamma e, \xi)e, r \rangle] + \frac{d}{2\gamma} \mathds{E}_{\xi, e} [\langle f(x-\gamma e, \xi)e, r \rangle] = 
            	    \\  &  \hspace{-0.1em}
    	            = \frac{d}{\gamma} \mathds{E}_{e} \left[ \dotprod{\mathds{E}_{\xi} \left[ f(x+\gamma e, \xi) \right] e}{r} \right] = \frac{d}{\gamma} \mathds{E}_{e} [\langle f(x+\gamma e)e, r \rangle] = \dotprod{\nabla f_\gamma(x)}{r}.	    
            	\end{split}
        	\end{equation}
        	Для второго слагаемого (\ref{eq_1_for_proof_noise_l2_randomization}) с учетом $|\delta(x)|\leq \Delta$ получим
        	\begin{equation}
        	\label{eq_3_for_proof_noise_l2_randomization}
        	    \frac{d}{\gamma} \mathds{E}_{e} [\langle (\delta(x+\gamma e) - \delta(x-\gamma e))e, r \rangle] \geq  -\frac{d}{2\gamma} 2\Delta \mathds{E}_{e} [ | \langle e, r \rangle | ] = -\frac{d}{\gamma} \Delta \mathds{E}_{e} [ | \langle e, r \rangle | ].
        	\end{equation}
        	Используя уравнения (\ref{eq_2_for_proof_noise_l2_randomization}) и (\ref{eq_3_for_proof_noise_l2_randomization}) для уравнения (\ref{eq_1_for_proof_noise_l2_randomization}) получим утверждение леммы для $l_2$-рандомизации.
	\end{itemize}
\end{proof}
\begin{corollary} \label{corollary_lemma_3_l1_randomization}
    Заметим, что вектор $\text{sign}(e) = \left( \text{sign}(e_1), ..., \text{sign}(e_d) \right)$ в $l_1$-норме при большой размерности пространства $d$ ведет себя схожим образом, как вектор из независимых радемахеровских случайных величин. Тогда из неравенства Хинчина следует $\mathds{E} [ | \langle \text{sign}(e), r \rangle | ] \leq  \| r \|_2  $, где $r \in \mathds{R}^d$. Применяя данное неравенство к лемме \ref{lemma_3_proof_noise_lp_randomization}, получим
    \begin{equation*}
        \mathds{E}_e \langle [\nabla f_\gamma (x,e)] - \nabla f_\gamma (x), r \rangle \lesssim \frac{d \Delta \| r \|_2}{\gamma}.
    \end{equation*}
\end{corollary}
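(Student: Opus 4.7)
The plan is to reduce the corollary to a direct computation: combine the symmetry of the uniform distribution on $B_1^d(1)$ with a Rademacher $L^1$--$L^2$ bound, and then substitute the resulting estimate into Lemma~\ref{lemma_3_proof_noise_lp_randomization}. The statement itself already points to these two ingredients in the right order, so the work is to justify them cleanly and then patch them together.

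First, I would argue that if $\Tilde{e}$ is uniform on $B_1^d(1)$, then the vector $(\text{sign}(\Tilde{e}_1),\dots,\text{sign}(\Tilde{e}_d))$ is \emph{exactly} a vector of i.i.d.\ Rademacher random variables, not merely approximately so for large $d$. Indeed, each coordinate-sign-flip map $T_i:\Tilde{e}\mapsto(\Tilde{e}_1,\dots,-\Tilde{e}_i,\dots,\Tilde{e}_d)$ is a volume-preserving involution of $B_1^d(1)$, so $\Pr(\text{sign}(\Tilde{e})=\sigma)=2^{-d}$ for every $\sigma\in\{-1,+1\}^d$. This makes the phrase ``при большой размерности пространства'' in the statement unnecessarily conservative, and avoids any headaches about higher-moment corrections.

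Second, I would invoke the Khintchine inequality (equivalently, Jensen's inequality applied to the Rademacher sum) to obtain, for any $r\in\mathds{R}^d$,
\begin{equation*}
    \mathds{E}_e\bigl[\,|\langle \text{sign}(e),r\rangle|\,\bigr]
    \;\leq\;\sqrt{\mathds{E}_e\bigl[\langle \text{sign}(e),r\rangle^2\bigr]}
    \;=\;\sqrt{\sum_{i=1}^d r_i^2}\;=\;\|r\|_2,
\end{equation*}
where the middle equality uses that the Rademacher coordinates are orthonormal in $L^2$ (cross terms vanish and $\mathds{E}[\text{sign}(e_i)^2]=1$). This is the crucial inequality that the statement asserts; the Khintchine $L^1$--$L^2$ constant for Rademacher sums is at most $1$ by Cauchy--Schwarz, which is all we need.

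Finally, I would substitute this bound directly into the first inequality of Lemma~\ref{lemma_3_proof_noise_lp_randomization}, which gives
\begin{equation*}
    \mathds{E}_{\xi,e}\langle \nabla f_\gamma(x,\xi,e)-\nabla f_\gamma(x),\,r\rangle
    \;\geq\;-\frac{d\Delta\|r\|_2}{\gamma}.
\end{equation*}
Applying the same argument with $-r$ in place of $r$ (using linearity of the inner product and the identity $\|-r\|_2=\|r\|_2$) yields the matching upper bound, and the two together give the $\lesssim$ estimate claimed in the corollary. The only point requiring even mild care — hardly an obstacle — is the passage from the one-sided lower bound of the lemma to the symmetric two-sided statement of the corollary, which is purely cosmetic.
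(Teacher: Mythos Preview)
Your proposal is correct and follows the same route the paper sketches inside the corollary statement itself: bound $\mathds{E}[|\langle \text{sign}(e),r\rangle|]$ by $\|r\|_2$ via a Rademacher/Khintchine argument, then plug into Lemma~\ref{lemma_3_proof_noise_lp_randomization}. Two minor remarks: the relevant vector $e$ in the gradient estimator lives on the $l_1$-\emph{sphere} $S_1^d(1)$ rather than the ball $B_1^d(1)$, but your sign-flip symmetry argument carries over verbatim to the surface measure; and your observation that $\text{sign}(e)$ is \emph{exactly} uniform on $\{-1,+1\}^d$ (not merely asymptotically) is a genuine sharpening of the paper's ``при большой размерности'' phrasing.
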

\begin{corollary} \label{corollary_lemma_3_l2_randomization}
    Применяя утверждение леммы \ref{lemma_2_proof_noise_l2_randomization} к лемме \ref{lemma_3_proof_noise_lp_randomization}, получим то же неравенство, что и в работе \cite{Dvinskikh_2022} для всех $r \in \mathds{R}^d$
	\begin{equation*}
	    \mathds{E}_e \langle [\nabla f_\gamma (x,e)] - \nabla f_\gamma (x), r \rangle \lesssim \frac{\sqrt{d} \Delta \| r \|_2}{\gamma}.
	\end{equation*}
\end{corollary}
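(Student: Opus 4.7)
План состоит в прямой комбинации двух уже установленных вспомогательных утверждений: леммы~\ref{lemma_3_proof_noise_lp_randomization} (в её варианте для $l_2$-рандомизации) и леммы~\ref{lemma_2_proof_noise_l2_randomization}. Первым шагом воспользуюсь неравенством из леммы~\ref{lemma_3_proof_noise_lp_randomization}
\[
\mathds{E}_{\xi, e}[\dotprod{\nabla f_\gamma (x,\xi,e)}{r}] \geq \dotprod{\nabla f_\gamma (x)}{r} - \frac{d \Delta\, \mathds{E}_{e}[|\dotprod{e}{r}|]}{\gamma},
\]
которое после переноса члена $\dotprod{\nabla f_\gamma(x)}{r}$ в левую часть даёт нижнюю оценку для $\mathds{E}_e[\dotprod{\nabla f_\gamma(x,e) - \nabla f_\gamma(x)}{r}]$ с остаточным слагаемым, содержащим $\mathds{E}_e[|\dotprod{e}{r}|]$.

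На втором шаге подставлю оценку из леммы~\ref{lemma_2_proof_noise_l2_randomization}: для случайного единичного вектора $e$, равномерно распределённого на сфере $S_2^d(1)$, выполнено $\mathds{E}_e[|\dotprod{e}{r}|] \leq \|r\|_2/\sqrt{d}$. Сокращение $d \cdot 1/\sqrt{d} = \sqrt{d}$ сразу же приводит к
\[
\mathds{E}_e[\dotprod{\nabla f_\gamma(x,e) - \nabla f_\gamma(x)}{r}] \geq -\frac{\sqrt{d}\, \Delta\, \|r\|_2}{\gamma},
\]
то есть ровно той оценке, которая фигурирует в формулировке следствия (с точностью до знака).

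Наконец, чтобы перейти к формулировке с символом $\lesssim$ (т.е. к двусторонней оценке величины скалярного произведения ошибки), применю ту же цепочку рассуждений к вектору $-r$ вместо $r$: левая часть поменяет знак, тогда как правая часть сохранится, поскольку $\|{-r}\|_2 = \|r\|_2$. Объединение верхней и нижней оценок даёт требуемое неравенство. Основная техническая сложность (анализ смещения безградиентной оценки при враждебном шуме и оценка среднего модуля линейного функционала на единичной евклидовой сфере) уже была преодолена в предшествующих леммах; само же следствие получается прямой подстановкой, и содержательного препятствия здесь не возникает. Принципиальное отличие от следствия~\ref{corollary_lemma_3_l1_randomization} состоит лишь в том, что там оценка $\mathds{E}_e[|\dotprod{\mathrm{sign}(e)}{r}|] \leq \|r\|_2$ (по неравенству Хинчина) не содержит выигрыша по $\sqrt{d}$, из-за чего в $l_2$-случае получается более сильный результат.
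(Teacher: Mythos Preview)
Your proof is correct and follows essentially the same route as the paper: the paper's ``proof'' is entirely contained in the statement of the corollary itself (combine Lemma~\ref{lemma_3_proof_noise_lp_randomization} for the $l_2$-case with the bound $\mathds{E}_e[|\dotprod{e}{r}|]\le \|r\|_2/\sqrt{d}$ from Lemma~\ref{lemma_2_proof_noise_l2_randomization}), and you have spelled this out explicitly, including the passage to the two-sided estimate via $r\mapsto -r$.
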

Теперь рассмотрим как получить оценки уровня неточности (шума). \\
Из \eqref{Noise_for_l1} и \eqref{Noise_for_l2}, подставляя значение параметра $\gamma$ из следствия \ref{gamma_parameter_clear} ( $\gamma = \frac{\sqrt{d} \varepsilon}{4 M_2}$ для $l_1$-рандомизации, $\gamma = \frac{\varepsilon}{2 M_2}$ для $l_2$-рандомизации) и $R = \|x^0 - x_* \|_2$, получим ограничительные условия на уровень неточности (шума) для $l_1$ и $l_2$-рандомизаций:
\begin{longtable}{ | c | c | }
            \hline
            Схема сглаживания с $l_1$-рандомизацией & Схема сглаживания с $l_2$-рандомизацией  \\ [1ex] 
            \hline
            \hline
            \endhead
            & \\
            $
                \Delta \lesssim \frac{\varepsilon^2}{\sqrt{d} M_2 R}
            $ &
            $
               \Delta \lesssim \frac{\varepsilon^2}{\sqrt{d} M_2 R}
            $ \\ [2ex]
            \hline
            \hline
        \end{longtable}
\begin{remark}
\label{remark_1}
    Стоит отметить, что результаты леммы \ref{lemma_3_proof_noise_lp_randomization} и следствий \ref{corollary_lemma_3_l1_randomization} и \ref{corollary_lemma_3_l2_randomization} для седловых задач такие же и доказываются аналогично. Следовательно оценки уровня неточности для задачи с седловой точкой совпадают с оценками для выпуклой оптимизации. С целью избежания повторений будем ссылаться на это замечания, как на результат получения оценок уровня неточности для седловых задач.
\end{remark}
    


 \section{Численные эксперименты}\label{section:Numerical_result}

Приведем численное сравнение двух техник сглаживания в архитектуре федеративного обучения. Рассматриваем стохастическую негладкую задачу оптимизации на множестве симплекса $Q = \left\{ x \in \mathbb{R}^d \; : \; \| x \|_1 = 1, x \geq 0  \right\}$ с функцией $f: \mathbb{R}^d \rightarrow \mathbb{R}$, определенную следующим образом:
\begin{equation*}
    f(x) = \dotprod{b}{x} + \| x \|_{\infty},
\end{equation*}
где $b \in \mathbb{R}^d$ -- случайный вектор, равномерно распределенный на отрезке $[0,1]$. В качестве метода оптимизации используется безградиентный двухточечный алгоритм Minibatch Accelerated SGD, рассмотренный в разд. \ref{section:Federated_Learning}. На фиг. \ref{figure:2} представлена зависимость ошибки $f(\overline{x})$ на последней итерации от изменения количества локальных вызовов безградиентного оракула $K = 3^0,...,3^{8}$ при различном количестве машин $B = 8$ и $B = 32$, работающих параллельно, где $\overline{x} = \frac{1}{t} \sum_{i =1}^t x_i$, а $t$ -- номер итерации. Количество итераций было выбрано $KN = 3^{8}$. Чем больше $K$, тем меньше количество коммуникационных раундов $N$, обратное также верно. Уровень неточности $\Delta = 0$ (без враждебного шума), размерность задачи $d = 100$.

\begin{figure}[ht]
    \includegraphics[width=1.0\textwidth]{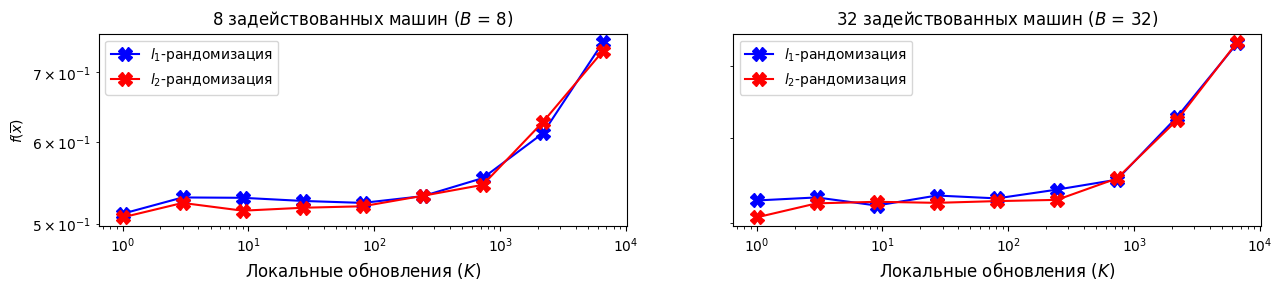}
    \caption{Зависимость ошибки от числа машин $B$ и различных локальных обновлений $K$.}
    \label{figure:2}
\end{figure}

Не трудно заметить из фиг. \ref{figure:2}, что при увеличении числа локальных обновлений $K$ (уменьшении раундов связи $N$), точность ухудшается (тем самым подтверждая теоретические результаты), однако не критично. То есть при решении практических задач, несмотря на теорию, можно брать число локальных обновлений $K \leq 3^6$, чтобы получить достаточно хороший результат. Также стоит обратить внимание, что на практике схема сглаживания с $ l_2$-рандомизацией не только не уступает схеме сглаживания с $ l_1$-рандомизацией, но иногда и превосходит её. Однако возникает интерес узнать в каких случаях схема сглаживания с $ l_1$-рандомизацией будет превосходить схему сглаживания с $ l_2$-рандомизацией. Для этого рассмотрим два случая вычисления безградиентного оракула: с враждебным шумом и без враждебного шума. На фиг. \ref{figure:3} представлена зависимость ошибки $f(\overline{x})$ от количество итераций и уровня неточности (наличия враждебного шума) $\Delta = 0$ и $\Delta = 6 \cdot 10^{-5}$. Число работающих параллельно машин выбрано $B=8$. На каждой машине количество локальных вызов безградиентного оракула равняется $K=3^2$, а количество коммуникационных раундов $3^6$. Таким образом, общее число итераций $NK = 3^8$. Размерность задачи $d=100$, количество запусков равняется 20.

\begin{figure}[ht]
    \includegraphics[width=1.0\textwidth]{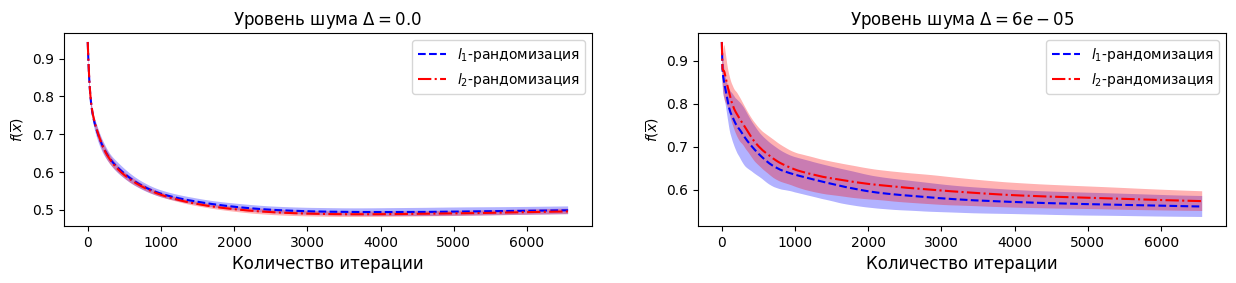}
    \caption{Зависимость ошибки от количества итераций при различных значениях уровня неточности при 20 запусках.}
    \label{figure:3}
\end{figure}

По фиг. \ref{figure:3} можно сделать вывод, что при добавления враждебного шума схема сглаживания с $l_1$ рандомизацией работает лучше, чем схема сглаживания с $l_2$ рандомизацией в архитектуре федеративного обучения.


\section{Заключение}\label{section:conclusions}
В данной работе были получены верхние оценки оптимального алгоритма решения седловых задач в архитектуре федеративного обучения и нашли константу Липшица для градиента в схеме сглаживания с $l_1$-рандомизацией. Применяя сглаживающие схемы, создали оптимальные безградиентые двухточечные и одноточечные алгоритмы c $l_1$ и $l_2$-рандомизацией, благодаря которым можно решать стохастические негладкие выпуклые задачи оптимизации и выпукло-вогнутые задачи оптимизации в настройке федеративного обучения с неточным безградиентным оракулом. Показали при каких условия схема сглаживания с $l_1$-рандомизацией  работает лучше, чем с $l_2$-рандомизацией в архитектуре федеративного обучения. 
Было показано на практике, что   число локальных обновлений может быть увеличено за счет уменьшения количества коммуникационных раундов связи, при этом  общее количество итераций остается прежним.




\printbibliography

\newpage

\appendix
\renewcommand{\theequation}{П.\arabic{equation}}
\section*{{$\qquad \qquad \qquad \qquad \qquad \qquad \qquad \qquad \qquad$\itПРИЛОЖЕНИЕ} } \label{sec:apCATD}

\section{Верхние оценки}
\label{Upper_bound}
Рассмотрим гладкую выпукло-вогнутую седловую задачу
\begin{equation}
    \label{smooth_Saddle-point_problem}
    \min_{x \in Q_x \subseteq \mathbb{R}^{d_x}} \max_{y \in Q_y \subseteq \mathbb{R}^{d_y}} f(x,y),
\end{equation}
где $f:Q_x \times Q_y \rightarrow \mathbb{R}$~--- выпукло-вогнутая липшицева непрерывная функция, $Q_x$ и $Q_y$~--- выпуклые множества. Для простоты изложения введем множество $Q_z = Q_x \times Q_y$, $z = (x,y)$ и монотонный оператор $F$:
\begin{equation}
    \label{Operator_F}
    F(z) = F(x, y) = 
    \begin{pmatrix}
        \nabla_x f(x,y) \\
        -\nabla_y f(x,y)
    \end{pmatrix}.
\end{equation}
Тогда $z_* \in Q_z$~--- решение вариационного неравенства (ВН) выглядит следующим образом:
\begin{equation}
    \label{Var_ineq}
    \dotprod{F(z)}{ z_* - z} \leq 0 \;\;\; \forall z \in Q_z.
\end{equation}
Оценивать неточность возможного решения $z \in Q_z$ будем по ошибке
\begin{equation}
    \label{Err_vi}
    \text{Err}_{\text{vi}}(z) = \max_{u \in Q_z} \dotprod{F(u)}{z - u}.
\end{equation}
В дальнейшем накладываем на $F$, помимо монотонности, требование
\begin{equation}
    \label{requirement_on_F}
    \forall(z, z' \in Q_z): \| F(z) - F(z') \|_* \leq L \| z - z' \| + V,
\end{equation}
где $L \geq 0$, $V \geq 0$~--- известные константы, $ \| \cdot \|_* = \max_{z: \| z \| \leq 1} \dotprod{\cdot}{z}.$ 
И предполагаем, что $F(z,\xi_t)$ является несмещенным и имеет ограниченую дисперсию, т.е. $\forall z \in Q_z$ справедливо
\begin{equation*}
    \mathbb{E}[F(z,\xi_t)] = F(z), \;\;\; \mathbb{E}[\| F(z,\xi_t) - F(z) \|^2_*] \leq \sigma^2.
\end{equation*}

Для распространения основного результата статьи \cite{Woodworth_2021} на седловые задачи и ВН в качестве алгоритма выберем Stochastic Mirror Prox (SMP), описанного в работе \cite{Juditsky_2011}.  В дальнейшем рассмотрим два метода архитектуры FL, предложенные в \cite{Woodworth_2021} и назовем их Minibatch SMP и Single-Machine SMP. Для этого воспользуемся известными результатами и предположениями.
\begin{algorithm}[ht]
	\caption{Алгоритм SMP.}
	\begin{algorithmic}[1]
		\STATE  Инициализация. Выберите $r_0 \in Z^0$ и размер шага $\eta_\tau, \,\,\, 1 \leq \tau \leq t$. 
		\STATE Шаг $\tau, \,\,\, \tau = 1,2,...,t$: По известному $r_{\tau - 1} \in Z^0$, вычислите
	    \begin{equation}
	        \begin{cases*}
	        \label{eq_algorithm_SMP}
                w_\tau = P_{r_{\tau - 1}}(\eta_\tau \hat{F}(r_{\tau - 1})),\\
                r_\tau = P_{r_{\tau - 1}}(\eta_\tau \hat{F}(w_{\tau})).
            \end{cases*}
	    \end{equation}
	    Когда $\tau < t$, выполнять цикл до шага $t+1$. 
	    \STATE На шаге $t$ выведите
	    \begin{equation*}
	        \hat{z}_t = \left[ \sum_{\tau = 1}^{t} \eta_\tau \right]^{-1} \sum_{\tau = 1}^{t} \eta_\tau w_\tau.
	    \end{equation*}
	\end{algorithmic}
	\label{alg:SMP}
\end{algorithm}
\begin{assumption}
    \label{assumption_SMP_1}
    Для всех $z \in Q_z$ с $\mu \in [0, \infty)$ имеем
    \begin{equation*}
        \| \mathbb{E}[F(z,\xi_t) - F(z)] \|_* \leq \mu, \;\;\; \mathbb{E}[\| F(z,\xi_t) - F(z) \|^2_*] \leq \sigma^2.
    \end{equation*}
\end{assumption}
\begin{assumption}
    \label{assumption_SMP_2}
    Для всех $z \in Q_z$ и всех $t$ имеем
    \begin{equation*}
       \mathbb{E} \left[ \exp \left\{ \| F(z,\xi_t) - F(z) \|^2_* / \sigma^2 \right\} \right] \leq \exp \left\{ 1 \right\}. 
    \end{equation*}
\end{assumption}
\begin{lemma}[см. \cite{Juditsky_2011}]
    \label{lemma_for_SMP}
    Пусть ВН (\ref{Var_ineq}) с монотонным оператором $F$ (\ref{Operator_F}), удовлетворяющим требованию (\ref{requirement_on_F}) решается с помощью $t$ шагового алгоритма \ref{alg:SMP}, используя стохастический оракул ($\hat{F} = F(z,\xi_t)$), и пусть размеры шага $\eta_t \equiv \eta$ удовлетворяют $0 \leq \eta \leq \frac{1}{\sqrt{3}L}$. Тогда
    \begin{enumerate}
        \item[(i)] с предположием \ref{assumption_SMP_1} имеем
        
        \begin{equation*}
            \mathbb{E} \left\{ \text{Err}_{\text{vi}} (\hat{z}_t) \right\} \leq K_0(t) \equiv  \left[ \frac{R^2}{t \eta} + \frac{7 \eta}{2} \left[ V^2 + 2\sigma^2 \right]  \right] + 2 \mu R;
        \end{equation*}
        
        \item[(ii)] с предположениями \ref{assumption_SMP_1}, \ref{assumption_SMP_2} для любого $\Lambda > 0$
        
        \begin{equation*}
            \text{Prob} \left\{ \text{Err}_{\text{vi}}(\hat{z}_t) > K_0(t) + \Lambda K_1(t) \right\} \leq \exp \left\{-\Lambda^2/3 \right\}+\exp\left\{-\Lambda t \right\},
        \end{equation*}
        где
        \begin{equation*}
            K_1(t) = \frac{7 \sigma^2 \eta}{2} + \frac{2 \sigma R}{\sqrt{t}}.
        \end{equation*}
    \end{enumerate}
\end{lemma}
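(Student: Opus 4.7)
Поскольку лемма цитируется из \cite{Juditsky_2011}, буду следовать общей схеме анализа стохастического экстраградиентного метода в неевклидовой (проксимальной) настройке. Ключевой отправной точкой возьму стандартное одношаговое неравенство Mirror Prox, вытекающее из условий оптимальности прокс-шагов \eqref{eq_algorithm_SMP}: для любого $u \in Q_z$
\begin{equation*}
    \eta \dotprod{\hat F(w_\tau)}{w_\tau - u} \leq V_{r_{\tau-1}}(u) - V_{r_\tau}(u) + \frac{\eta^2}{2}\|\hat F(w_\tau) - \hat F(r_{\tau-1})\|_*^2 - \frac{1}{2}\|w_\tau - r_{\tau-1}\|^2,
\end{equation*}
где $V_z(u)$~--- бреговское расстояние, ассоциированное с прокс-функцией. Разложив $\hat F = F + (\hat F - F)$ и применив \eqref{requirement_on_F}, оценю сверху квадрат нормы в правой части через $3L^2 \|w_\tau - r_{\tau-1}\|^2 + 3V^2 + 3\|\Delta_\tau\|_*^2$, где $\Delta_\tau$ собирает стохастические шумы в точках $w_\tau$ и $r_{\tau-1}$. Условие $\eta \leq 1/(\sqrt{3}L)$ используется именно для того, чтобы квадратичный по шагу член $\tfrac{3}{2}\eta^2 L^2 \|w_\tau - r_{\tau-1}\|^2$ поглотился отрицательным $-\tfrac{1}{2}\|w_\tau - r_{\tau-1}\|^2$.

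\textbf{Суммирование и переход к ошибке ВН.} Складываю полученные неравенства по $\tau = 1, \dots, t$: бреговские расстояния телескопируются до $V_{r_0}(u) \leq R^2$. По монотонности оператора $F$ имеем $\dotprod{F(w_\tau) - F(u)}{w_\tau - u} \geq 0$, что позволяет заменить $F(w_\tau)$ на $F(u)$ в левой части. Разделив на $t\eta$ и взяв супремум по $u \in Q_z$ в духе определения \eqref{Err_vi}, с учётом выпуклости усреднённой точки $\hat z_t$ получу
\begin{equation*}
    \text{Err}_{\text{vi}}(\hat z_t) \leq \frac{R^2}{t\eta} + \frac{7\eta}{2 t} \sum_\tau \left( V^2 + \|\Delta_\tau\|_*^2 \right) + \frac{1}{t} \sum_\tau \dotprod{\hat F(w_\tau) - F(w_\tau)}{w_\tau - u^*} + B_{\text{bias}},
\end{equation*}
где $u^* \in Q_z$~--- максимизатор, а $B_{\text{bias}}$ собирает вклад смещения оракула.

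\textbf{Математическое ожидание и концентрация.} Для пункта (i) беру математическое ожидание: по предположению \ref{assumption_SMP_1} имеем $\mathbb{E}\|\Delta_\tau\|_*^2 \leq 4\sigma^2$, а смещение даёт $B_{\text{bias}} \leq 2\mu R$ благодаря оценке $|\dotprod{\mathbb{E}[\hat F - F]}{w_\tau - u}| \leq \mu \mathbb{E}\|w_\tau - u\| \lesssim \mu R$; в итоге получается $K_0(t)$. Для пункта (ii) к мартингальной части $\sum_\tau \eta \dotprod{\hat F(w_\tau) - F(w_\tau)}{w_\tau - u^*}$ применяется неравенство типа Азумы--Хёффдинга (откуда слагаемое $2\sigma R/\sqrt{t}$), а к сумме $\sum_\tau \eta \|\Delta_\tau\|_*^2$~--- экспоненциальное неравенство Маркова с условием $\mathbb{E}\exp\{\|\hat F - F\|_*^2/\sigma^2\} \leq e$ из \ref{assumption_SMP_2} (откуда слагаемое $7\sigma^2 \eta/2$), что в сумме даёт $K_1(t)$.

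\textbf{Основное препятствие}~--- адаптивность максимизатора $u^*$, зависящего от всей случайной траектории, что не позволяет прямо применить мартингальную концентрацию к $\sum_\tau \dotprod{\hat F(w_\tau) - F(w_\tau)}{w_\tau - u^*}$. Стандартный приём \cite{Juditsky_2011}~--- ввести вспомогательную зеркально-спусковую последовательность $v_\tau$, стартующую из $r_0$ и работающую с \emph{отражённым} случайным полем $-(\hat F(w_\tau) - F(w_\tau))$; тогда указанная сумма переписывается как $\sum_\tau \dotprod{\hat F(w_\tau) - F(w_\tau)}{w_\tau - v_{\tau-1}} + V_{r_0}(u^*) - V_{v_t}(u^*)$, где приращения уже согласованы с фильтрацией, и к ним применимы стандартные субгауссовские оценки без зависимости от $u^*$.
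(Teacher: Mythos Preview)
The paper does not prove this lemma at all; it is quoted verbatim from \cite{Juditsky_2011} and used as a black box in Corollary~\ref{Optima_alg_VI}. Your sketch is a faithful outline of the original Juditsky--Nemirovski argument: the extragradient one-step inequality, the absorption of the $L$-term via $\eta \le 1/(\sqrt{3}L)$, telescoping of Bregman divergences, and---crucially---the auxiliary mirror-descent sequence $v_\tau$ that decouples the (random) maximizer $u^*$ from the martingale increments so that Azuma-type concentration can be applied; this is exactly how \cite{Juditsky_2011} proceeds, so there is nothing to compare against within the present paper.
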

\begin{corollary}
    \label{Optima_alg_VI}
    Используя результаты леммы \ref{lemma_for_SMP} расспространим идею оптимального алгоритма \cite{Woodworth_2021} и получим верхние оценки.
    
    \begin{itemize}
        \item Minibatch SMP
        
         Данный алгоритм выполняет $N$ итераций SMP, используя пробатченные градиенты размера $BK$. Во время каждого раунда связи каждая машина вычисляет $K$ стохастического оракула, затем машины посылают свои минибатчи, усредняя в один большой минибатч размером $BK$, затем они обновляют $w_\tau$ и $r_\tau$ в соответствии с (\ref{eq_algorithm_SMP}).  Так как вычисление пробатченного стохастического оракула уменьшает дисперсию в $BK$ раз, то обозначим $\sigma^2_{BK} = \frac{\sigma^2}{BK}$. Предположим, что размер шага $\eta_t \equiv \eta $ алгоритма \ref{alg:SMP} равен
         \begin{equation}
            \label{step_MB_SMP}
            \eta_t = \min \left[ \frac{1}{\sqrt{3}L}, 7 R \sqrt{\frac{2BK}{7N(V^2 + 2\sigma^2)}} \right],
        \end{equation}
        где $N$~--- заданное число итераций. Тогда с предположением \ref{assumption_SMP_1}
        \begin{equation}
        \label{upper_MB_SMP}
            \mathbb{E} \left\{ \text{Err}_{\text{vi}} (\hat{z}_N) \right\} \leq K^*_0(N) \equiv \max \left[ \frac{7}{4} \frac{L R^2}{N}, 7R \sqrt{\frac{V^2 + 2 \sigma^2 }{3 B K N}}  \right] + 2 \mu R.
        \end{equation}
        Так как верхняя граница (\ref{upper_MB_SMP}) для ошибки алгоритма \ref{alg:SMP} со стратегией шага (\ref{step_MB_SMP}) зависит схожим образом от $\sigma$ и $V$, то $V$ можем заменить на $\sigma$. Тогда  
        \begin{equation*}
            \mathbb{E} \left\{ \text{Err}_{\text{vi}} (\hat{z}_N) \right\} \leq K^*_0(N) \equiv \max \left[ \frac{7}{4} \frac{L R^2}{N}, 7 \frac{ \sigma R}{\sqrt{BKN}}  \right] + 2 \mu R,
        \end{equation*}
    если выполняются предположения \ref{assumption_SMP_1} и \ref{assumption_SMP_2}, то
        \begin{equation*}
            \text{Prob} \left\{ \text{Err}_{\text{vi}}(\hat{z}_N) > K^*_0(N) + \Lambda K_1^*(N) \right\} \leq \exp \left\{-\Lambda^2/3 \right\}+\exp\left\{-\Lambda N \right\},
        \end{equation*}
        где
        \begin{equation*}
            K_1^*(N) = \frac{7}{2} \frac{\sigma_{BK} R}{\sqrt{N}} = \frac{7}{2} \frac{\sigma R}{\sqrt{BKN}}.
        \end{equation*}
        
    \item Single-Machine SMP
    
    Данный алгоритм в отличие от Mini-batch SMP игнорирует $B-1$ машин и выполняет $KN$ шагов алгоритма SMP. Тогда предположим, что размер шага $\eta_t$ алгоритма SMP равен
    \begin{equation}
    \label{step_SM_SMP}
        \eta_t = \min \left[ \frac{1}{\sqrt{3}L}, 7 R \sqrt{\frac{2}{7KN(V^2 + 2\sigma^2)}} \right],
    \end{equation}
    где $KN$ - заданное число итераций. Тогда с предположением \ref{assumption_SMP_1}, 
	\begin{equation}
	\label{upper_SM_SMP}
            \mathbb{E} \left\{ \text{Err}_{\text{vi}} (\hat{z}_{KN}) \right\} \leq K^*_0(KN) \equiv \max \left[ \frac{7}{4} \frac{L R^2}{KN}, 7R \sqrt{\frac{V^2 + 2\sigma^2 }{3KN}}  \right] + 2 \mu R,
        \end{equation}
     Так как верхняя граница (\ref{upper_SM_SMP}) для ошибки алгоритма \ref{alg:SMP} со стратегией шага (\ref{step_SM_SMP}) зависит схожим образом от $\sigma$ и $V$, то $V$ можем заменить на $\sigma$. Тогда
     \begin{equation*}
            \mathbb{E} \left\{ \text{Err}_{\text{vi}} (\hat{z}_{KN}) \right\} \leq K^*_0(KN) \equiv \max \left[ \frac{7}{4} \frac{L R^2}{KN}, 7 \frac{\sigma R}{\sqrt{KN}}  \right] + 2 \mu R,
        \end{equation*}
    если выполняются предположения \ref{assumption_SMP_1} и \ref{assumption_SMP_2}, то
        \begin{equation*}
            \text{Prob} \left\{ \text{Err}_{\text{vi}}(\hat{z}_{KN}) > K^*_0(KN) + \Lambda K_1^*(KN) \right\} \leq \exp \left\{-\Lambda^2/3 \right\}+\exp\left\{-\Lambda KN \right\},
        \end{equation*}
        где
        \begin{equation*}
            K_1^*(KN) = \frac{7}{2} \frac{\sigma R}{\sqrt{KN}}.
        \end{equation*}
    \end{itemize}
\end{corollary}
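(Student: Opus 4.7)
The strategy is to apply Lemma \ref{lemma_for_SMP} to each variant and choose the step-size so as to balance the two interior terms of the bound $K_0(t) = R^2/(t\eta) + (7\eta/2)[V^2 + 2\sigma^2]$ against the admissibility constraint $\eta \leq 1/(\sqrt{3}L)$ imposed by the lemma.

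For Minibatch SMP, I would first justify the variance reduction for the aggregated oracle: averaging the $BK$ independent samples produced by $B$ machines (each computing $K$ oracle calls and then communicating their partial sum) yields an unbiased stochastic oracle whose variance equals $\sigma_{BK}^2 = \sigma^2/(BK)$, and whose sub-Gaussian parameter in Assumption \ref{assumption_SMP_2} scales likewise as $1/\sqrt{BK}$. Substituting $\sigma^2 \mapsto \sigma_{BK}^2$ and $t = N$ into part (i) of Lemma \ref{lemma_for_SMP}, the interior minimizer in $\eta$ of $K_0(N)$ is $\eta^\star = R\sqrt{2BK/(7N(V^2 + 2\sigma^2))}$; clipping it against $1/(\sqrt{3}L)$ gives exactly \eqref{step_MB_SMP}. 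If the Lipschitz constraint is binding, the term $R^2/(N\eta)$ dominates and is of order $LR^2/N$; otherwise the two terms of $K_0$ balance to order $R\sqrt{(V^2 + 2\sigma^2)/(BKN)}$. Taking the maximum and adding the bias $2\mu R$ carried straight from the lemma produces \eqref{upper_MB_SMP}. The replacement $V \mapsto \sigma$ in the final form of the bound exploits the symmetric dependence on $V^2$ and $\sigma^2$ up to numerical constants.

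For Single-Machine SMP the bookkeeping is simpler: only one machine runs, so the oracle keeps its original variance $\sigma^2$ and Algorithm \ref{alg:SMP} performs $t = KN$ iterations. Repeating the balancing argument with $t = KN$ and unreduced $\sigma$ produces \eqref{step_SM_SMP} and \eqref{upper_SM_SMP}. The high-probability claims in both cases follow from part (ii) of Lemma \ref{lemma_for_SMP} under Assumption \ref{assumption_SMP_2}, by inserting the optimized $\eta$ and the appropriate $t$ into $K_1(t) = 7\sigma^2\eta/2 + 2\sigma R/\sqrt{t}$ and keeping the leading $1/\sqrt{t}$ contribution (after absorbing $\sigma^2 \eta$ into the same scale via the step-size choice).

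The step I anticipate as the main obstacle is precisely the variance-reduction step for the aggregated oracle together with the corresponding sub-Gaussian tail bound: Lemma \ref{lemma_for_SMP} is stated for a single oracle draw per iteration, so one must argue carefully that an i.i.d.\ average of $BK$ draws still satisfies Assumptions \ref{assumption_SMP_1}--\ref{assumption_SMP_2} with $\sigma^2$ replaced by $\sigma^2/(BK)$. The variance part is immediate, but the sub-Gaussian part requires a small concentration argument (e.g.\ a moment-generating-function bound for sums of independent centred sub-Gaussians) rather than a direct second-moment calculation. Once this is in hand, the remaining optimisation of $\eta$ and the separation into the two regimes in the maximum are routine calculus.
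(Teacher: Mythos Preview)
Your proposal is correct and follows essentially the same approach as the paper: the corollary's derivation is inline in its statement and consists precisely of substituting $\sigma^2\mapsto\sigma^2/(BK)$, $t=N$ (respectively $t=KN$ with unreduced $\sigma^2$) into Lemma~\ref{lemma_for_SMP} and then optimising $\eta$ under the clipping constraint $\eta\leq 1/(\sqrt{3}L)$. If anything, you are more careful than the paper about the sub-Gaussian preservation under averaging---the paper simply asserts the high-probability bound with $\sigma_{BK}$ without further comment.
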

Запишем лемму, используя такой же подход что и Woodworth \cite{Woodworth_2021}, чтобы объединить два алгоритма (Mini-batch SMP и Single-Machine SMP) в один оптимальный алгоритм.
\begin{corollary}
    \label{lemma_opt_alg}
    Для любых $L, R, \sigma, K, N, M > 0$ алгоритм использует \textit{Minibatch SMP}, когда $K \leq \frac{\sigma^2 N}{L^2 R^2}$, и использует \textit{Single-Machine SMP}, когда $K > \frac{\sigma^2 N}{L^2 R^2}$, тогда с предположением \ref{assumption_SMP_1}
    \begin{equation*}
        \mathbb{E} \left\{ \text{Err}_{\text{vi}} (\hat{z}) \right\} \leq c \cdot \left(  \frac{L R^2}{KN} + \frac{ \sigma R}{\sqrt{BKN}} + \min \left[ \frac{L R^2}{N} , \frac{\sigma R}{\sqrt{KN}} \right]    \right) + 2 \mu R,
    \end{equation*}
  где $c$~--- некоторая числовая константа.
\end{corollary}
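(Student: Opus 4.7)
Стратегия состоит в применении готовых оценок из следствия \ref{Optima_alg_VI} к каждому из двух алгоритмов с последующим разбором двух режимов в зависимости от соотношения между $K$ и пороговым значением $K_* := \sigma^2 N / (L^2 R^2)$. Здесь $\hat{z}$ обозначает выход того алгоритма, который предписано использовать.

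Первым шагом перепишу оценки из следствия \ref{Optima_alg_VI}, заменяя $\max[a,b]$ на $a+b$ (с потерей числового множителя не более $2$). Это даёт для Minibatch SMP
\begin{equation*}
    \mathbb{E}\{\text{Err}_{\text{vi}}(\hat{z}_N)\} \lesssim \frac{LR^2}{N} + \frac{\sigma R}{\sqrt{BKN}} + 2\mu R,
\end{equation*}
а для Single-Machine SMP
\begin{equation*}
    \mathbb{E}\{\text{Err}_{\text{vi}}(\hat{z}_{KN})\} \lesssim \frac{LR^2}{KN} + \frac{\sigma R}{\sqrt{KN}} + 2\mu R.
\end{equation*}

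Ключевое алгебраическое наблюдение: условие $K \leq K_*$ равносильно неравенству $\frac{LR^2}{N} \leq \frac{\sigma R}{\sqrt{KN}}$ (возведение в квадрат и умножение на $N$ сводит одно к другому). Следовательно, минимум $\min\bigl[\frac{LR^2}{N}, \frac{\sigma R}{\sqrt{KN}}\bigr]$ реализуется разными слагаемыми в двух режимах: равен $\frac{LR^2}{N}$ при $K \leq K_*$ и $\frac{\sigma R}{\sqrt{KN}}$ при $K > K_*$.

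Далее разбираются два случая. В режиме $K \leq K_*$ используется Minibatch SMP; его оценка содержит слагаемые $\frac{LR^2}{N}$ (совпадающее с минимумом) и $\frac{\sigma R}{\sqrt{BKN}}$, уже присутствующие в искомой мажоранте, а дополнительное слагаемое $\frac{LR^2}{KN}$ в ней лишь ослабляет неравенство (поскольку оно неотрицательно). В режиме $K > K_*$ используется Single-Machine SMP; его оценка содержит слагаемые $\frac{LR^2}{KN}$ и $\frac{\sigma R}{\sqrt{KN}}$ (равное минимуму), а дополнительное слагаемое $\frac{\sigma R}{\sqrt{BKN}} \leq \frac{\sigma R}{\sqrt{KN}}$ из мажоранты также не нарушает неравенства. Объединяя оба случая, получаем требуемую оценку с некоторой абсолютной константой $c$.

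Основным нетехническим моментом здесь является именно выбор порогового значения $K_* = \sigma^2 N/(L^2 R^2)$: оно подобрано так, чтобы переход между двумя алгоритмами происходил ровно там, где компоненты $\frac{LR^2}{N}$ и $\frac{\sigma R}{\sqrt{KN}}$ меняются местами в качестве минимума. Никаких вероятностных рассуждений помимо уже использованных в следствии \ref{Optima_alg_VI} не требуется --- весь аргумент сводится к элементарным неравенствам ($K \geq 1$, $B \geq 1$) и дословно повторяет схему из \cite{Woodworth_2021} для гладкой стохастической выпуклой оптимизации, адаптированную к mirror prox для вариационных неравенств.
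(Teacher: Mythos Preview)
Your proposal is correct and matches the approach the paper indicates: the paper does not spell out a proof of this corollary but simply states it as following ``the same approach as Woodworth \cite{Woodworth_2021}'', and your argument is precisely that approach --- case-split on the threshold $K_* = \sigma^2 N/(L^2 R^2)$, use the corresponding bound from Corollary~\ref{Optima_alg_VI}, and observe that the threshold is exactly where the two terms inside the minimum cross. There is nothing to add.
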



\section{Доказательство теоремы \ref{theorem_1}}
\label{proof_theorem_1}
Приведем полное доказательство теоремы \ref{theorem_1}. Для этого разобьем доказательство на две части: доказательство для $l_1$-рандомизации и доказательство для $l_2$-рандомизации.

\underline{Для $l_1$-рандомизации имеем}:
\begin{itemize}
    \item Minibatch Accelerated SGD
    
    Данный алгоритм после $N$ раундов связи дает скорость сходимости для $f_\gamma (x)$ (см. \cite{Woodworth_2021, Lan_2012}) в соответствии со следствием \ref{corollary_lemma_3_l1_randomization}:
    \begin{equation*}
        \mathds{E}[f_{\gamma}(x_{ag}^{N+1}) - f(x_{*})] \leq \frac{4L_{f_\gamma}R^2}{N^2} + \frac{4 \sigma R}{\sqrt{BKN}} + \frac{d \Delta R}{\gamma},
    \end{equation*}
    где $x_{*}(\gamma) = \underset{x \in Q_\gamma}{\mathrm{argmin}}  f_{\gamma}(x)$.
    
    Если имеем  $\frac{\varepsilon}{2}$-точность для функции $f_{\gamma}(x)$ с $\gamma = \frac{\sqrt{d} \varepsilon}{4 M_2}$ (из следствия \ref{gamma_parameter_clear}), то имеем $\varepsilon$-точность для функции $f(x)$:
    \begin{equation*}
        f(x_{ag}^{N+1}) - f(x_{*}) \leq f(x_{ag}^{N+1}) - f(x_{*}(\gamma)) \leq f_{\gamma}(x_{ag}^{N+1}) - f(x_{*}(\gamma)) + \frac{2}{\sqrt{d}} \gamma M_2  \leq \frac{\varepsilon}{2 } + \frac{\varepsilon}{2} =\varepsilon.
    \end{equation*}
    Чтобы была $\frac{\varepsilon}{2}$-точность для $f_{\gamma}(x) $ нужно
	\begin{equation}
	    \label{eq1_for_proof1:1_l1_randomization}
	    \frac{d \Delta R}{\gamma} \leq \frac{\varepsilon}{6},
	\end{equation}
	\begin{equation}
	    \label{eq2_for_proof1:1_l1_randomization}
	    \frac{4L_{f_\gamma}R^2}{ N^2} \leq \frac{\varepsilon}{6},
	\end{equation}
	
	\begin{equation}
	    \label{eq3_for_proof1:1_l1_randomization}
	    \frac{ 4\sigma R}{\sqrt{B K N}} \leq \frac{\varepsilon}{6}.
	\end{equation}

        Подставляя $L_{f_\gamma} =  \frac{d M}{\gamma}$ (из следствия \ref{Const_Lipshitz_grad}), где $\gamma = \frac{\sqrt{d} \varepsilon}{4 M_2}$  и $ \sigma^2 = 2 \kappa(p,d) M_2^2$ (из следствия \ref{sigma_for_two_point}) в неравенства (\ref{eq1_for_proof1:1_l1_randomization})-(\ref{eq3_for_proof1:1_l1_randomization}) получим
        \begin{equation*}
            \Delta \leq \frac{\gamma \varepsilon }{6 d R} \,\,\, \Rightarrow \,\,\, \Delta \leq \frac{ \varepsilon^2 }{24 \sqrt{d} M_2 R} \,\,\, \Rightarrow
        \end{equation*}
        \begin{equation*}
            \Delta = O\left( \frac{ \varepsilon^2 }{\sqrt{d} M_2 R} \right)
        \end{equation*}
        уровень неточности,
        \begin{equation*}
            N^2 \geq \frac{96 \sqrt{d} M M_2 R^2}{\varepsilon^2} \Rightarrow N \geq \frac{4 \sqrt{6} d^{1/4} \sqrt{M M_2} R}{\varepsilon} \Rightarrow
        \end{equation*}
        \begin{equation*}
            \Rightarrow N = O\left( \frac{d^{1/4} \sqrt{M M_2} R}{\varepsilon} \right)
        \end{equation*}
        количество коммуникационных раундов,
        \begin{equation*}
            B \geq \frac{576 \sigma^2 R^2}{KN \varepsilon^2} \Rightarrow B \geq \frac{1152 \kappa(p,d) M_2^2 R^2}{KN \varepsilon^2} \Rightarrow
        \end{equation*}
        \begin{equation*}
            \Rightarrow B = O\left( \frac{\kappa(p,d) M_2^2 R^2}{KN \varepsilon^2} \right)
        \end{equation*}
        количество работающих параллельно машин и
        \begin{equation*}
             T = N \cdot K \cdot B = \frac{1152 \kappa(p,d) M_2^2 R^2}{ \varepsilon^2} \,\,\, \Rightarrow
        \end{equation*}
        \begin{equation*}
            \Rightarrow \,\,\, T =  O \left(  \frac{\kappa(p,d) M_2^2 R^2}{ \varepsilon^2}   \right) =
            \begin{cases}
                 O \left(  \frac{d M_2^2 R^2}{ \varepsilon^2}   \right), & p = 2 \;\;\; (q = 2),\\
                 O \left(  \frac{ M_2^2 R^2}{ \varepsilon^2}   \right), & p = 1 \;\;\; (q = \infty),
            \end{cases}
        \end{equation*}
        общее количество вызовов двухточечного безградиентного оракула;

    \item Single-Machine Accelerated SGD
    
    Данный алгоритм после $NK$ итераций дает скорость сходимости для $f_\gamma (x)$ (см. \cite{Woodworth_2021, Lan_2012}) в соответствии со следствием \ref{corollary_lemma_3_l1_randomization}:
    \begin{equation*}
        \mathds{E}[f_{\gamma}(x_{ag}^{NK+1}) - f(x_{*})] \leq \frac{4L_{f_\gamma}R^2}{N^2K^2} + \frac{4 \sigma R}{\sqrt{NK}} + \frac{d \Delta R}{\gamma},
    \end{equation*}
    где $x_{*}(\gamma) = \underset{x \in Q_\gamma}{\mathrm{argmin}}  f_{\gamma}(x)$.
    
    Если имеем  $\frac{\varepsilon}{2}$-точность для функции $f_{\gamma}(x)$ с $\gamma = \frac{\sqrt{d} \varepsilon}{4 M_2}$ (из следствия \ref{gamma_parameter_clear}), то имеем $\varepsilon$-точность для функции $f(x)$:
    \begin{equation*}
        f(x_{ag}^{N+1}) - f(x_{*}) \leq f(x_{ag}^{N+1}) - f(x_{*}(\gamma)) \leq f_{\gamma}(x_{ag}^{N+1}) - f(x_{*}(\gamma)) + \frac{2}{\sqrt{d}} \gamma M_2  \leq \frac{\varepsilon}{2 } + \frac{\varepsilon}{2} =\varepsilon.
    \end{equation*}
    Чтобы была $\frac{\varepsilon}{2}$-точность для $f_{\gamma}(x) $ нужно
	\begin{equation}
	    \label{eq1_for_proof1:2_l1_randomization}
	    \frac{d \Delta R}{\gamma} \leq \frac{\varepsilon}{6}
	\end{equation}
	\begin{equation}
	    \label{eq2_for_proof1:2_l1_randomization}
	    \frac{4L_{f_\gamma}R^2}{K^2 N^2} \leq \frac{\varepsilon}{6},
	\end{equation}
	
	\begin{equation}
	    \label{eq3_for_proof1:2_l1_randomization}
	    \frac{ 4\sigma R}{\sqrt{K N}} \leq \frac{\varepsilon}{6}
	\end{equation}

        Подставляя $L_{f_\gamma} =  \frac{d M}{\gamma}$ (из следствия \ref{Const_Lipshitz_grad}), где $\gamma = \frac{\sqrt{d} \varepsilon}{4 M_2}$  и $ \sigma^2 = 2 \kappa(p,d) M_2^2$ (из следствия \ref{sigma_for_two_point}), в неравенства (\ref{eq1_for_proof1:2_l1_randomization})-(\ref{eq3_for_proof1:2_l1_randomization}), получим:
        \begin{equation*}
            \Delta \leq \frac{\gamma \varepsilon }{6 d R} \,\,\, \Rightarrow \,\,\, \Delta \leq \frac{ \varepsilon^2 }{24 \sqrt{d} M_2 R} \,\,\, \Rightarrow
        \end{equation*}
        \begin{equation*}
            \Delta = O\left( \frac{ \varepsilon^2 }{\sqrt{d} M_2 R} \right)
        \end{equation*}
        уровень неточности,
        \begin{equation*}
            NK  \geq \frac{576 \sigma^2 R^2}{\varepsilon^2} \Rightarrow N K  \geq \frac{1152 \kappa(p,d) M_2^2 R^2}{ \varepsilon^2}.
        \end{equation*}
        Так как $N$ напрямую зависит от $K$, то количество коммуникачионных раундов можно взять $N = 1$, тогда
        \begin{equation*}
            NK = K = O \left( \frac{ \kappa(p,d) M_2^2 R^2}{ \varepsilon^2} \right)
        \end{equation*}
        количество локальных вызовов безградиентного оракула и
        \begin{equation*}
             T = N \cdot K \cdot B = \frac{1152 \kappa(p,d) M_2^2 R^2}{ \varepsilon^2} \,\,\, \Rightarrow
        \end{equation*}
        \begin{equation*}
            \Rightarrow \,\,\, T = O \left(  \frac{\kappa(p,d) M_2^2 R^2}{ \varepsilon^2}   \right) =
            \begin{cases}
                 O \left(  \frac{d M_2^2 R^2}{ \varepsilon^2}   \right), & p = 2 \;\;\; (q = 2),\\
                 O \left(  \frac{ M_2^2 R^2}{ \varepsilon^2}   \right), & p = 1 \;\;\; (q = \infty),
            \end{cases}
        \end{equation*}
        общее количество вызовов двухточечного безградиентного оракула;
    
    \item Local-AC-SA
    
    Данный алгоритм после $N$ раундов связи дает скорость сходимости для $f_\gamma (x)$ (см. \cite{Woodworth_2020, Lan_2012}) в соответствии со следствием \ref{corollary_lemma_3_l1_randomization}:
    \begin{equation*}
        \mathds{E}[f_{\gamma}(x_{ag}^{N+1}) - f(x_{*})] \leq \frac{4L_{f_\gamma} R^2}{K^2 N^2} + \frac{4 \sigma R}{\sqrt{B K N}} + \frac{d \Delta R}{\gamma},
    \end{equation*}
    где $x_{*}(\gamma) = \underset{x \in Q_\gamma}{\mathrm{argmin}}  f_{\gamma}(x)$.
    
    Если имеем  $\frac{\varepsilon}{2}$-точность для функции $f_{\gamma}(x)$ с $\gamma = \frac{\sqrt{d} \varepsilon}{4 M_2}$ (из следствия \ref{gamma_parameter_clear}), то имеем $\varepsilon$-точность для функции $f(x)$:
    \begin{equation*}
        f(x_{ag}^{N+1}) - f(x_{*}) \leq f(x_{ag}^{N+1}) - f(x_{*}(\gamma)) \leq f_{\gamma}(x_{ag}^{N+1}) - f(x_{*}(\gamma)) + \frac{2}{\sqrt{d}} \gamma M_2  \leq \frac{\varepsilon}{2 } + \frac{\varepsilon}{2} =\varepsilon.
    \end{equation*}
    Чтобы была $\frac{\varepsilon}{2}$-точность для $f_{\gamma}(x) $ нужно
	\begin{equation}
	    \label{eq1_for_proof1:3_l1_randomization}
	    \frac{d \Delta R}{\gamma} \leq \frac{\varepsilon}{6},
	\end{equation}
	\begin{equation}
	    \label{eq2_for_proof1:3_l1_randomization}
	    \frac{4L_{f_\gamma}R^2}{K^2 N^2} \leq \frac{\varepsilon}{6},
	\end{equation}
	
	\begin{equation}
	    \label{eq3_for_proof1:3_l1_randomization}
	    \frac{ 4\sigma R}{\sqrt{B K N}} \leq \frac{\varepsilon}{6}.
	\end{equation}

        Подставляя $L_{f_\gamma} =  \frac{d M}{\gamma}$ (из следствия \ref{Const_Lipshitz_grad}), где $\gamma = \frac{\sqrt{d} \varepsilon}{4 M_2}$  и $ \sigma^2 = 2 \kappa(p,d) M_2^2$ (из следствия \ref{sigma_for_two_point}), в неравенства (\ref{eq1_for_proof1:3_l1_randomization})-(\ref{eq3_for_proof1:3_l1_randomization}), получим:
        \begin{equation*}
            \Delta \leq \frac{\gamma \varepsilon }{6 d R} \,\,\, \Rightarrow \,\,\, \Delta \leq \frac{ \varepsilon^2 }{24 \sqrt{d} M_2 R} \,\,\, \Rightarrow
        \end{equation*}
        \begin{equation*}
            \Delta = O\left( \frac{ \varepsilon^2 }{\sqrt{d} M_2 R} \right)
        \end{equation*}
        уровень неточности,
        \begin{equation*}
            N^2 K^2 \geq \frac{96 \sqrt{d} M M_2 R^2}{\varepsilon^2} \Rightarrow NK \geq \frac{4 \sqrt{6} d^{1/4} \sqrt{M M_2} R}{\varepsilon}.
        \end{equation*}
        Так как $N$ напрямую зависит от $K$, то количество коммуникачионных раундов можно взять $N = 1$, тогда
        \begin{equation*}
            NK = K = O \left( \frac{d^{1/4} \sqrt{M M_2} R}{\varepsilon} \right)
        \end{equation*}
        количество локальных вызовов безградиентного оракула,
        \begin{equation*}
            B \geq \frac{576 \sigma^2 R^2}{KN \varepsilon^2} \Rightarrow B \geq \frac{1152 \kappa(p,d) M_2^2 R^2}{KN \varepsilon^2} \Rightarrow
        \end{equation*}
        \begin{equation*}
            \Rightarrow B = O\left( \frac{\kappa(p,d) M_2^2 R^2}{KN \varepsilon^2} \right)
        \end{equation*}
        количество работающих параллельно машин и
        \begin{equation*}
             T = N \cdot K \cdot B = \frac{1152 \kappa(p,d) M_2^2 R^2}{ \varepsilon^2} \,\,\, \Rightarrow
        \end{equation*}
        \begin{equation*}
            \Rightarrow \,\,\, T =  O \left(  \frac{\kappa(p,d) M_2^2 R^2}{ \varepsilon^2}   \right) =
            \begin{cases}
                 O \left(  \frac{d M_2^2 R^2}{ \varepsilon^2}   \right), & p = 2 \;\;\; (q = 2),\\
                 O \left(  \frac{ M_2^2 R^2}{ \varepsilon^2}   \right), & p = 1 \;\;\; (q = \infty),
            \end{cases}
        \end{equation*}
        общее количество вызовов двухточечного безградиентного оракула;

    \item Federated Accelerated SGD (FedAc)
    
    Данный алгоритм после $N$ раундов связи дает скорость сходимости для $f_\gamma (x)$ (см. \cite{Yuan_Ma_2020}) в соответствии со следствием \ref{corollary_lemma_3_l1_randomization}:
    \begin{equation*}
        \mathds{E}[f_{\gamma}(x_{ag}^{N+1}) - f(x_{*})] \leq \frac{L_{f_\gamma} R^2}{K N^2} + \frac{ \sigma R}{\sqrt{B K N}} + \min \left\{ \frac{L_{f_\gamma}^{1/3} \sigma^{2/3} R^{4/3}}{K^{1/3} N}, \frac{L_{f_\gamma}^{1/2} \sigma^{1/2} R^{3/2}}{K^{1/4} N} \right\} + \frac{d \Delta R}{\gamma},
    \end{equation*}
    где $x_{*}(\gamma) = \underset{x \in Q_\gamma}{\mathrm{argmin}}  f_{\gamma}(x)$.
    
    Если имеем  $\frac{\varepsilon}{2}$-точность для функции $f_{\gamma}(x)$ с $\gamma = \frac{\sqrt{d} \varepsilon}{4 M_2}$ (из следствия \ref{gamma_parameter_clear}), то имеем $\varepsilon$-точность для функции $f(x)$:
    \begin{equation*}
        f(x_{ag}^{N+1}) - f(x_{*}) \leq f(x_{ag}^{N+1}) - f(x_{*}(\gamma)) \leq f_{\gamma}(x_{ag}^{N+1}) - f(x_{*}(\gamma)) + \frac{2}{\sqrt{d}} \gamma M_2  \leq \frac{\varepsilon}{2 } + \frac{\varepsilon}{2} =\varepsilon.
    \end{equation*}
    Чтобы была $\frac{\varepsilon}{2}$-точность для $f_{\gamma}(x) $ нужно
	\begin{equation}
	    \label{eq1_for_proof1:4_l1_randomization}
	    \frac{d \Delta R}{\gamma} \leq \frac{\varepsilon}{8},
	\end{equation}
	\begin{equation}
	    \label{eq2_for_proof1:4_l1_randomization}
	    \frac{L_{f_\gamma}R^2}{K N^2} \leq \frac{\varepsilon}{8},
	\end{equation}
	
	\begin{equation}
	    \label{eq3_for_proof1:4_l1_randomization}
	    \frac{ \sigma R}{\sqrt{B K N}} \leq \frac{\varepsilon}{8},
	\end{equation}
	\begin{equation}
	    \label{eq4_for_proof1:4_l1_randomization}
	    \min \left\{ \frac{L_{f_\gamma}^{1/3} \sigma^{2/3} R^{4/3}}{K^{1/3} N}, \frac{L_{f_\gamma}^{1/2} \sigma^{1/2} R^{3/2}}{K^{1/4} N} \right\} \leq \frac{\varepsilon}{8}.
	\end{equation}

        Подставляя $L_{f_\gamma} =  \frac{d M}{\gamma}$ (из следствия \ref{Const_Lipshitz_grad}), где $\gamma = \frac{\sqrt{d} \varepsilon}{4 M_2}$  и $ \sigma^2 = 2 \kappa(p,d) M_2^2$ (из следствия \ref{sigma_for_two_point}), в неравенства (\ref{eq1_for_proof1:4_l1_randomization})-(\ref{eq4_for_proof1:4_l1_randomization}), получим
        \begin{equation*}
            \Delta \leq \frac{\gamma \varepsilon }{6 d R} \,\,\, \Rightarrow \,\,\, \Delta \leq \frac{ \varepsilon^2 }{24 \sqrt{d} M_2 R} \,\,\, \Rightarrow
        \end{equation*}
        \begin{equation*}
            \Delta = O\left( \frac{ \varepsilon^2 }{\sqrt{d} M_2 R} \right)
        \end{equation*}
        уровень неточности,
        \begin{equation*}
            NK \geq \frac{8^{1/2} K^{1/2} L^{1/2}_{f_\gamma}R}{ \varepsilon^{1/2}} \,\,\,  \Rightarrow \,\,\,  NK \geq \frac{16^{1/2} K^{1/2} d^{1/4} M^{1/2} M_2^{1/2} R}{ \varepsilon} \,\,\,  \Rightarrow \,\,\, NK \gtrsim \frac{ K^{1/2} }{ \varepsilon},
        \end{equation*}
        \begin{equation*}
            NK \geq \frac{64 \sigma^2 R^2}{B \varepsilon^2}  \,\,\, \Rightarrow \,\,\, NK \geq \frac{72 \kappa(p,d) d M_2^2 R^2}{B \varepsilon^2} \,\,\,  \Rightarrow \,\,\, NK \gtrsim \frac{1}{B \varepsilon^2}
        \end{equation*}
        и
        \begin{equation*}
            NK \geq \min \left\{ \frac{K^{2/3} L_{f_\gamma}^{1/3} \sigma^{2/3} R^{4/3}}{\varepsilon}, \frac{K^{3/4} L_{f_\gamma}^{1/2} \sigma^{1/2} R^{3/2}}{ \varepsilon} \right\} \,\,\, \Rightarrow 
        \end{equation*}
        \begin{equation*}
            \Rightarrow \,\,\, NK \geq \min \left\{  \frac{2^{1/3} K^{2/3} d^{1/6} (M M_2)^{1/3} \sigma^{2/3} R^{4/3}}{ \varepsilon^{4/3}}, \frac{2^{1/2} K^{3/4} d^{1/4} (M M_2)^{1/2} \sigma^{1/2} R^{3/2}}{ \varepsilon^{3/2}} \right\} \,\,\, \Rightarrow 
        \end{equation*}
        \begin{equation*}
            \Rightarrow \,\,\, NK \geq \frac{2^{1/3} K^{2/3} d^{1/6} (M M_2)^{1/3} \sigma^{2/3} R^{4/3}}{ \varepsilon^{4/3}} \,\,\,  \Rightarrow \,\,\, NK \gtrsim \frac{K^{2/3}}{ \varepsilon^{4/3}}.
        \end{equation*}
       Таким образом, наименьшее число коммуникационных раундов при условиии, что $NK \in [1, \varepsilon^{-2}]$ и $T \in [1, \varepsilon^{-2}]$, будет выглядить следующим образом:
        \begin{equation*}
            N \sim \frac{1}{\varepsilon}: \;\;\;\;\; N \geq \frac{8 L_{f_\gamma}^{1/3} \sigma^{2/3} R^{4/3}}{K^{1/3} \varepsilon} \Rightarrow \;\;\; N = O\left( \frac{d^{1/6} (\kappa(p,d) M)^{1/3} M_2 R^{4/3}}{K^{1/3} \varepsilon^{4/3}} \right)
        \end{equation*}
        тогда
        \begin{equation*}
            K \sim \frac{1}{\varepsilon}: \;\;\;\;\; K \geq \frac{ \sigma^2 R^2}{B N \varepsilon^2} \Rightarrow \;\;\; K = O\left( \frac{ \kappa(p,d) M^2_2 R^2}{B N \varepsilon^2} \right)
        \end{equation*}
        количество локальных вызовов безградиентного оракула, $B = 1$~--- количество работающих параллельно машин и
        \begin{equation*}
             T \sim \frac{1}{\varepsilon^2}: \;\;\;\;\; T = N \cdot K \cdot B =  O \left(  \frac{\kappa(p,d) M_2^2 R^2}{ \varepsilon^2}   \right) =
            \begin{cases}
                 O \left(  \frac{d M_2^2 R^2}{ \varepsilon^2}   \right), & p = 2 \;\;\; (q = 2),\\
                 O \left(  \frac{M_2^2 R^2}{ \varepsilon^2}   \right), & p = 1 \;\;\; (q = \infty),
            \end{cases}
        \end{equation*}
        общее количество вызовов двухточечного безградиентного оракула.
\end{itemize}

\newpage
\underline{Для $l_2$-рандомизации имеем}:
\begin{itemize}
    \item Minibatch Accelerated SGD
    
    Данный алгоритм после $N$ раундов связи дает скорость сходимости для $f_\gamma (x)$ (см. \cite{Woodworth_2021, Lan_2012}) в соответствии со следствием \ref{corollary_lemma_3_l2_randomization}:
    \begin{equation*}
        \mathds{E}[f_{\gamma}(x_{ag}^{N+1}) - f(x_{*})] \leq \frac{4L_{f_\gamma}R^2}{N^2} + \frac{4 \sigma R}{\sqrt{BKN}} + \frac{\sqrt{d} \Delta R}{\gamma},
    \end{equation*}
    где $x_{*}(\gamma) = \underset{x \in Q_\gamma}{\mathrm{argmin}}  f_{\gamma}(x)$.
    
    Если имеем  $\frac{\varepsilon}{2}$-точность для функции $f_{\gamma}(x)$ с $\gamma = \frac{\varepsilon}{2 M_2}$ (из следствия \ref{gamma_parameter_clear}), то имеем $\varepsilon$-точность для функции $f(x)$:
    \begin{equation*}
        f(x_{ag}^{N+1}) - f(x_{*}) \leq f(x_{ag}^{N+1}) - f(x_{*}(\gamma)) \leq f_{\gamma}(x_{ag}^{N+1}) - f(x_{*}(\gamma)) + \gamma M_2  \leq \frac{\varepsilon}{2 } + \frac{\varepsilon}{2} =\varepsilon.
    \end{equation*}
    Чтобы была $\frac{\varepsilon}{2}$-точность для $f_{\gamma}(x) $ нужно
	\begin{equation}
	    \label{eq1_for_proof1:1_l2_randomization}
	    \frac{\sqrt{d} \Delta R}{\gamma} \leq \frac{\varepsilon}{6},
	\end{equation}
	\begin{equation}
	    \label{eq2_for_proof1:1_l2_randomization}
	    \frac{4L_{f_\gamma}R^2}{ N^2} \leq \frac{\varepsilon}{6},
	\end{equation}
	
	\begin{equation}
	    \label{eq3_for_proof1:1_l2_randomization}
	    \frac{ 4\sigma R}{\sqrt{B K N}} \leq \frac{\varepsilon}{6}.
	\end{equation}

        Подставляя $L_{f_\gamma} =  \frac{\sqrt{d} M}{\gamma}$ (из следствия \ref{Const_Lipshitz_grad}), где $\gamma = \frac{\varepsilon}{2 M_2}$  и $ \sigma^2 = 2 \kappa(p,d) M_2^2$ (из следствия \ref{sigma_for_two_point}), в неравенства (\ref{eq1_for_proof1:1_l2_randomization})-(\ref{eq3_for_proof1:1_l2_randomization}), получим
        \begin{equation*}
            \Delta \leq \frac{\gamma \varepsilon }{6 \sqrt{d} R} \,\,\, \Rightarrow \,\,\, \Delta \leq \frac{ \varepsilon^2 }{12 \sqrt{d} M_2 R} \,\,\, \Rightarrow
        \end{equation*}
        \begin{equation*}
            \Delta = O\left( \frac{ \varepsilon^2 }{\sqrt{d} M_2 R} \right)
        \end{equation*}
        уровень неточности,
        \begin{equation*}
            N^2 \geq \frac{48 \sqrt{d} M M_2 R^2}{\varepsilon^2} \Rightarrow N \geq \frac{4 \sqrt{3} d^{1/4} \sqrt{M M_2} R}{\varepsilon} \Rightarrow
        \end{equation*}
        \begin{equation*}
            \Rightarrow N = O\left( \frac{d^{1/4} \sqrt{M M_2} R}{\varepsilon} \right)
        \end{equation*}
        количество коммуникационных раундов,
        \begin{equation*}
            B \geq \frac{576 \sigma^2 R^2}{KN \varepsilon^2} \Rightarrow B \geq \frac{1152 \kappa(p,d) M_2^2 R^2}{KN \varepsilon^2} \Rightarrow
        \end{equation*}
        \begin{equation*}
            \Rightarrow B = O\left( \frac{\kappa(p,d) M_2^2 R^2}{KN \varepsilon^2} \right)
        \end{equation*}
        количество работающих параллельно машин и
        \begin{equation*}
             T = N \cdot K \cdot B = \frac{1152 \kappa(p,d) M_2^2 R^2}{ \varepsilon^2} \,\,\, \Rightarrow
        \end{equation*}
        \begin{equation*}
            \Rightarrow \,\,\, T = \tilde O \left(  \frac{\kappa(p,d) M_2^2 R^2}{ \varepsilon^2}   \right) =
            \begin{cases}
                \tilde O \left(  \frac{d M_2^2 R^2}{ \varepsilon^2}   \right), & p = 2 \;\;\; (q = 2),\\
                \tilde O \left(  \frac{(\ln d) M_2^2 R^2}{ \varepsilon^2}   \right), & p = 1 \;\;\; (q = \infty),
            \end{cases}
        \end{equation*}
        общее количество вызовов двухточечного безградиентного оракула;

    \item Single-Machine Accelerated SGD
    
    Данный алгоритм после $NK$ итераций дает скорость сходимости для $f_\gamma (x)$ (см. \cite{Woodworth_2021, Lan_2012}) в соответствии со следствием \ref{corollary_lemma_3_l2_randomization}:
    \begin{equation*}
        \mathds{E}[f_{\gamma}(x_{ag}^{NK+1}) - f(x_{*})] \leq \frac{4L_{f_\gamma}R^2}{N^2K^2} + \frac{4 \sigma R}{\sqrt{NK}} + \frac{\sqrt{d} \Delta R}{\gamma},
    \end{equation*}
    где $x_{*}(\gamma) = \underset{x \in Q_\gamma}{\mathrm{argmin}}  f_{\gamma}(x)$.
    
    Если имеем  $\frac{\varepsilon}{2}$-точность для функции $f_{\gamma}(x)$ с $\gamma = \frac{\varepsilon}{2 M_2}$ (из следствия \ref{gamma_parameter_clear}), то имеем $\varepsilon$-точность для функции $f(x)$:
    \begin{equation*}
       f(x_{ag}^{N+1}) - f(x_{*}) \leq f(x_{ag}^{N+1}) - f(x_{*}(\gamma)) \leq f_{\gamma}(x_{ag}^{N+1}) - f(x_{*}(\gamma)) + \gamma M_2  \leq \frac{\varepsilon}{2 } + \frac{\varepsilon}{2} =\varepsilon.
    \end{equation*}
    Чтобы была $\frac{\varepsilon}{2}$-точность для $f_{\gamma}(x) $ нужно
	\begin{equation}
	    \label{eq1_for_proof1:2_l2_randomization}
	    \frac{\sqrt{d} \Delta R}{\gamma} \leq \frac{\varepsilon}{6},
	\end{equation}
	\begin{equation}
	    \label{eq2_for_proof1:2_l2_randomization}
	    \frac{4L_{f_\gamma}R^2}{K^2 N^2} \leq \frac{\varepsilon}{6},
	\end{equation}
	
	\begin{equation}
	    \label{eq3_for_proof1:2_l2_randomization}
	    \frac{ 4\sigma R}{\sqrt{K N}} \leq \frac{\varepsilon}{6}.
	\end{equation}

        Подставляя $L_{f_\gamma} =  \frac{\sqrt{d} M}{\gamma}$ (из следствия \ref{Const_Lipshitz_grad}), где $\gamma = \frac{\varepsilon}{2 M_2}$  и $ \sigma^2 = 2 \kappa(p,d) M_2^2$ (из следствия \ref{sigma_for_two_point}), в неравенства (\ref{eq1_for_proof1:2_l2_randomization})-(\ref{eq3_for_proof1:2_l2_randomization}), получим
        \begin{equation*}
            \Delta \leq \frac{\gamma \varepsilon }{6 \sqrt{d} R} \,\,\, \Rightarrow \,\,\, \Delta \leq \frac{ \varepsilon^2 }{12 \sqrt{d} M_2 R} \,\,\, \Rightarrow
        \end{equation*}
        \begin{equation*}
            \Delta = O\left( \frac{ \varepsilon^2 }{\sqrt{d} M_2 R} \right)
        \end{equation*}
        уровень неточности,
        \begin{equation*}
            NK  \geq \frac{576 \sigma^2 R^2}{\varepsilon^2} \Rightarrow N K  \geq \frac{1152 \kappa(p,d) M_2^2 R^2}{ \varepsilon^2}.
        \end{equation*}
        Так как $N$ напрямую зависит от $K$, то количество коммуникачионных раундов можно взять $N = 1$, тогда
        \begin{equation*}
            NK = K = O \left( \frac{ \kappa(p,d) M_2^2 R^2}{ \varepsilon^2} \right)
        \end{equation*}
        количество локальных вызовов безградиентного оракула и
        \begin{equation*}
             T = N \cdot K \cdot B = \frac{1152 \kappa(p,d) M_2^2 R^2}{ \varepsilon^2} \,\,\, \Rightarrow
        \end{equation*}
        \begin{equation*}
            \Rightarrow \,\,\, T = \tilde O \left(  \frac{\kappa(p,d) M_2^2 R^2}{ \varepsilon^2}   \right) =
            \begin{cases}
                \tilde O \left(  \frac{d M_2^2 R^2}{ \varepsilon^2}   \right), & p = 2 \;\;\; (q = 2),\\
                \tilde O \left(  \frac{(\ln d) M_2^2 R^2}{ \varepsilon^2}   \right), & p = 1 \;\;\; (q = \infty),
            \end{cases}
        \end{equation*}
        общее количество вызовов двухточечного безградиентного оракула;
    
    \item Local-AC-SA
    
    Данный алгоритм после $N$ раундов связи дает скорость сходимости для $f_\gamma (x)$ (см. \cite{Woodworth_2020, Lan_2012}) в соответствии со следствием \ref{corollary_lemma_3_l2_randomization}:
    \begin{equation*}
        \mathds{E}[f_{\gamma}(x_{ag}^{N+1}) - f(x_{*})] \leq \frac{4L_{f_\gamma} R^2}{K^2 N^2} + \frac{4 \sigma R}{\sqrt{B K N}} + \frac{\sqrt{d} \Delta R}{\gamma},
    \end{equation*}
    где $x_{*}(\gamma) = \underset{x \in Q_\gamma}{\mathrm{argmin}}  f_{\gamma}(x)$.
    
    Если имеем  $\frac{\varepsilon}{2}$-точность для функции $f_{\gamma}(x)$ с $\gamma = \frac{\varepsilon}{2 M_2}$ (из следствия \ref{gamma_parameter_clear}), то имеем $\varepsilon$-точность для функции $f(x)$:
    \begin{equation*}
       f(x_{ag}^{N+1}) - f(x_{*}) \leq f(x_{ag}^{N+1}) - f(x_{*}(\gamma)) \leq f_{\gamma}(x_{ag}^{N+1}) - f(x_{*}(\gamma)) + \gamma M_2  \leq \frac{\varepsilon}{2 } + \frac{\varepsilon}{2} =\varepsilon.
    \end{equation*}
    Чтобы была $\frac{\varepsilon}{2}$-точность для $f_{\gamma}(x) $ нужно
	\begin{equation}
	    \label{eq1_for_proof1:3_l2_randomization}
	    \frac{\sqrt{d} \Delta R}{\gamma} \leq \frac{\varepsilon}{6},
	\end{equation}
	\begin{equation}
	    \label{eq2_for_proof1:3_l2_randomization}
	    \frac{4L_{f_\gamma}R^2}{K^2 N^2} \leq \frac{\varepsilon}{6},
	\end{equation}
	
	\begin{equation}
	    \label{eq3_for_proof1:3_l2_randomization}
	    \frac{ 4\sigma R}{\sqrt{B K N}} \leq \frac{\varepsilon}{6}.
	\end{equation}

        Подставляя $L_{f_\gamma} =  \frac{\sqrt{d} M}{\gamma}$ (из следствия \ref{Const_Lipshitz_grad}), где $\gamma = \frac{\varepsilon}{2 M_2}$  и $ \sigma^2 = 2 \kappa(p,d) M_2^2$ (из следствия \ref{sigma_for_two_point}), в неравенства (\ref{eq1_for_proof1:3_l2_randomization})-(\ref{eq3_for_proof1:3_l2_randomization}), получим
        \begin{equation*}
            \Delta \leq \frac{\gamma \varepsilon }{6 \sqrt{d} R} \,\,\, \Rightarrow \,\,\, \Delta \leq \frac{ \varepsilon^2 }{12 \sqrt{d} M_2 R} \,\,\, \Rightarrow
        \end{equation*}
        \begin{equation*}
            \Delta = O\left( \frac{ \varepsilon^2 }{\sqrt{d} M_2 R} \right)
        \end{equation*}
        уровень неточности,
        \begin{equation*}
            N^2 K^2 \geq \frac{48 \sqrt{d} M M_2 R^2}{\varepsilon^2} \Rightarrow NK \geq \frac{4 \sqrt{3} d^{1/4} \sqrt{M M_2} R}{\varepsilon}.
        \end{equation*}
        Так как $N$ напрямую зависит от $K$, то количество коммуникачионных раундов можно взять $N = 1$, тогда
        \begin{equation*}
            NK = K = O \left( \frac{d^{1/4} \sqrt{M M_2} R}{\varepsilon} \right)
        \end{equation*}
        количество локальных вызовов безградиентного оракула,
        \begin{equation*}
            B \geq \frac{576 \sigma^2 R^2}{KN \varepsilon^2} \Rightarrow B \geq \frac{1152 \kappa(p,d) M_2^2 R^2}{KN \varepsilon^2} \Rightarrow
        \end{equation*}
        \begin{equation*}
            \Rightarrow B = O\left( \frac{\kappa(p,d) M_2^2 R^2}{KN \varepsilon^2} \right)
        \end{equation*}
        количество работающих параллельно машин и
        \begin{equation*}
             T = N \cdot K \cdot B = \frac{1152 \kappa(p,d) M_2^2 R^2}{ \varepsilon^2} \,\,\, \Rightarrow
        \end{equation*}
        \begin{equation*}
            \Rightarrow \,\,\, T = \tilde O \left(  \frac{\kappa(p,d) M_2^2 R^2}{ \varepsilon^2}   \right) =
            \begin{cases}
                \tilde O \left(  \frac{d M_2^2 R^2}{ \varepsilon^2}   \right), & p = 2 \;\;\; (q = 2),\\
                \tilde O \left(  \frac{(\ln d) M_2^2 R^2}{ \varepsilon^2}   \right), & p = 1 \;\;\; (q = \infty),
            \end{cases}
        \end{equation*}
        общее количество вызовов двухточечного безградиентного оракула;

    \item Federated Accelerated SGD (FedAc)
    
    Данный алгоритм после $N$ раундов связи дает скорость сходимости для $f_\gamma (x)$ (см. \cite{Yuan_Ma_2020}) в соответствии со следствием \ref{corollary_lemma_3_l2_randomization}:
    \begin{equation*}
        \mathds{E}[f_{\gamma}(x_{ag}^{N+1}) - f(x_{*})] \leq \frac{L_{f_\gamma} R^2}{K N^2} + \frac{ \sigma R}{\sqrt{B K N}} + \min \left\{ \frac{L_{f_\gamma}^{1/3} \sigma^{2/3} R^{4/3}}{K^{1/3} N}, \frac{L_{f_\gamma}^{1/2} \sigma^{1/2} R^{3/2}}{K^{1/4} N} \right\} + \frac{\sqrt{d} \Delta R}{\gamma},
    \end{equation*}
    где $x_{*}(\gamma) = \underset{x \in Q_\gamma}{\mathrm{argmin}}  f_{\gamma}(x)$.
    
    Если имеем  $\frac{\varepsilon}{2}$-точность для функции $f_{\gamma}(x)$ с $\gamma = \frac{\varepsilon}{2 M_2}$ (из следствия \ref{gamma_parameter_clear}), то имеем $\varepsilon$-точность для функции $f(x)$:
    \begin{equation*}
        f(x_{ag}^{N+1}) - f(x_{*}) \leq f(x_{ag}^{N+1}) - f(x_{*}(\gamma)) \leq f_{\gamma}(x_{ag}^{N+1}) - f(x_{*}(\gamma)) + \gamma M_2  \leq \frac{\varepsilon}{2 } + \frac{\varepsilon}{2} =\varepsilon.
    \end{equation*}
    Чтобы была $\frac{\varepsilon}{2}$-точность для $f_{\gamma}(x) $ нужно
	\begin{equation}
	    \label{eq1_for_proof1:4_l2_randomization}
	    \frac{\sqrt{d} \Delta R}{\gamma} \leq \frac{\varepsilon}{8},
	\end{equation}
	\begin{equation}
	    \label{eq2_for_proof1:4_l2_randomization}
	    \frac{L_{f_\gamma}R^2}{K N^2} \leq \frac{\varepsilon}{8},
	\end{equation}
	
	\begin{equation}
	    \label{eq3_for_proof1:4_l2_randomization}
	    \frac{ \sigma R}{\sqrt{B K N}} \leq \frac{\varepsilon}{8},
	\end{equation}
	\begin{equation}
	    \label{eq4_for_proof1:4_l2_randomization}
	    \min \left\{ \frac{L_{f_\gamma}^{1/3} \sigma^{2/3} R^{4/3}}{K^{1/3} N}, \frac{L_{f_\gamma}^{1/2} \sigma^{1/2} R^{3/2}}{K^{1/4} N} \right\} \leq \frac{\varepsilon}{8}.
	\end{equation}

        Подставляя $L_{f_\gamma} =  \frac{\sqrt{d} M}{\gamma}$ (из следствия \ref{Const_Lipshitz_grad}), где $\gamma = \frac{\varepsilon}{2 M_2}$  и $ \sigma^2 = 2 \kappa(p,d) d M_2^2$ (из следствия \ref{sigma_for_two_point}), в неравенства (\ref{eq1_for_proof1:4_l2_randomization})-(\ref{eq4_for_proof1:4_l2_randomization}), получим
        \begin{equation*}
            \Delta \leq \frac{\gamma \varepsilon }{8 \sqrt{d} R} \,\,\, \Rightarrow \,\,\, \Delta \leq \frac{ \varepsilon^2 }{16 \sqrt{d} M_2 R} \,\,\, \Rightarrow
        \end{equation*}
        \begin{equation*}
            \Delta = O\left( \frac{ \varepsilon^2 }{\sqrt{d} M_2 R} \right)
        \end{equation*}
        уровень неточности,
        \begin{equation*}
            NK \geq \frac{8^{1/2} K^{1/2} L^{1/2}_{f_\gamma}R}{ \varepsilon^{1/2}} \,\,\,  \Rightarrow \,\,\,  NK \geq \frac{16^{1/2} K^{1/2} d^{1/4} M^{1/2} M_2^{1/2} R}{ \varepsilon} \,\,\,  \Rightarrow \,\,\, NK \gtrsim \frac{ K^{1/2} }{ \varepsilon},
        \end{equation*}
        \begin{equation*}
            NK \geq \frac{64 \sigma^2 R^2}{B \varepsilon^2}  \,\,\, \Rightarrow \,\,\, NK \geq \frac{72 \kappa(p,d) d M_2^2 R^2}{B \varepsilon^2} \,\,\,  \Rightarrow \,\,\, NK \gtrsim \frac{1}{B \varepsilon^2}
        \end{equation*}
        и
        \begin{equation*}
            NK \geq \min \left\{ \frac{K^{2/3} L_{f_\gamma}^{1/3} \sigma^{2/3} R^{4/3}}{\varepsilon}, \frac{K^{3/4} L_{f_\gamma}^{1/2} \sigma^{1/2} R^{3/2}}{ \varepsilon} \right\} \,\,\, \Rightarrow 
        \end{equation*}
        \begin{equation*}
            \Rightarrow \,\,\, NK \geq \min \left\{  \frac{2^{1/3} K^{2/3} d^{1/6} (M M_2)^{1/3} \sigma^{2/3} R^{4/3}}{ \varepsilon^{4/3}}, \frac{2^{1/2} K^{3/4} d^{1/4} (M M_2)^{1/2} \sigma^{1/2} R^{3/2}}{ \varepsilon^{3/2}} \right\} \,\,\, \Rightarrow 
        \end{equation*}
        \begin{equation*}
            \Rightarrow \,\,\, NK \geq \frac{2^{1/3} K^{2/3} d^{1/6} (M M_2)^{1/3} \sigma^{2/3} R^{4/3}}{ \varepsilon^{4/3}} \,\,\,  \Rightarrow \,\,\, NK \gtrsim \frac{K^{2/3}}{ \varepsilon^{4/3}}.
        \end{equation*}
       Таким образом, наименьшее число коммуникационных раундов при условиии, что $NK \in [1, \varepsilon^{-2}]$ и $T \in [1, \varepsilon^{-2}]$, будет выглядить следующим образом:
        \begin{equation*}
            N \sim \frac{1}{\varepsilon}: \;\;\;\;\; N \geq \frac{8 L_{f_\gamma}^{1/3} \sigma^{2/3} R^{4/3}}{K^{1/3} \varepsilon} \Rightarrow \;\;\; N = O\left( \frac{d^{1/2} (\kappa(p,d) M)^{1/3} M_2 R^{4/3}}{K^{1/3} \varepsilon^{4/3}} \right)
        \end{equation*}
        тогда
        \begin{equation*}
            K \sim \frac{1}{\varepsilon}: \;\;\;\;\; K \geq \frac{ \sigma^2 R^2}{B N \varepsilon^2} \Rightarrow \;\;\; K = O\left( \frac{ \kappa(p,d) d M^2_2 R^2}{B N \varepsilon^2} \right)
        \end{equation*}
        количество локальных вызовов безградиентного оракула, $B = 1$~--- количество работающих параллельно машин и
        \begin{equation*}
             T \sim \frac{1}{\varepsilon^2}: \;\;\;\;\; T = N \cdot K \cdot B = \tilde O \left(  \frac{\kappa(p,d) d M_2^2 R^2}{ \varepsilon^2}   \right) =
            \begin{cases}
                \tilde O \left(  \frac{d M_2^2 R^2}{ \varepsilon^2}   \right), & p = 2 \;\;\; (q = 2),\\
                \tilde O \left(  \frac{(\ln d) M_2^2 R^2}{ \varepsilon^2}   \right), & p = 1 \;\;\; (q = \infty),
            \end{cases}
        \end{equation*}
        общее количество вызовов двухточечного безградиентного оракула.

\end{itemize}


\section{Доказательство теоремы \ref{theorem_2}}
\label{proof_theorem_2}
В данном подразделе приведем полное доказательство теоремы \ref{theorem_2}. Для этого разобьем доказательство на две части: доказательство для $l_1$-рандомизации и доказательство для $l_2$-рандомизации.

\underline{Для $l_1$-рандомизации имеем}:
\begin{itemize}
    \item Minibatch SMP
    
    Данный алгоритм после $N$ раундов связи дает скорость сходимости для $f_\gamma (x)$ (см. следствие \ref{Optima_alg_VI}) в соответствии с замечанием \ref{remark_1}:
    \begin{equation*}
        \mathds{E}[f_{\gamma}(z_{N}) ] \leq  \max \left\{ \frac{7}{4} \frac{L R^2}{N}, 7 \frac{ \sigma R}{\sqrt{BKN}}  \right\} + \frac{d \Delta R}{\gamma}.
    \end{equation*}
    
    Если имеем  $\frac{\varepsilon}{2}$-точность для функции $f_{\gamma}(z)$ с $\gamma = \frac{\sqrt{d} \varepsilon}{4 M_2}$ (из следствия \ref{gamma_parameter_clear}), то имеем $\varepsilon$-точность для функции $f(z)$:
    \begin{equation*}
         f(z_{N}) \leq f_{\gamma}(z_{N})  + \frac{2}{\sqrt{d}}\gamma M_2  \leq \frac{\varepsilon}{2 } + \frac{\varepsilon}{2} =\varepsilon.
    \end{equation*}
    Чтобы была $\frac{\varepsilon}{2}$-точность для $f_{\gamma}(z) $ нужно
	\begin{equation}
	    \label{eq1_for_proof2:1_l1_randomization}
	    \frac{d \Delta R}{\gamma} \leq \frac{\varepsilon}{4},
	\end{equation}
	\begin{equation}
	    \label{eq2_for_proof2:1_l1_randomization}
	    \max \left\{ \frac{7}{4} \frac{L R^2}{N}, 7 \frac{ \sigma R}{\sqrt{BKN}}  \right\} \leq \frac{\varepsilon}{4}.
	\end{equation}

       Подставляя $L_{f_\gamma} =  \frac{d M}{\gamma}$ (из следствия \ref{Const_Lipshitz_grad}), где $\gamma = \frac{\sqrt{d} \varepsilon}{4 M_2}$  и $ \sigma^2 = 2 \kappa(p,d) M_2^2$ (из следствия \ref{sigma_for_two_point}), в неравенства (\ref{eq1_for_proof2:1_l1_randomization}) и (\ref{eq2_for_proof2:1_l1_randomization}), получим
        \begin{equation*}
            \Delta \leq \frac{\gamma \varepsilon }{6 d R} \,\,\, \Rightarrow \,\,\, \Delta \leq \frac{ \varepsilon^2 }{24 \sqrt{d} M_2 R} \,\,\, \Rightarrow
        \end{equation*}
        \begin{equation*}
            \Delta = O\left( \frac{ \varepsilon^2 }{\sqrt{d} M_2 R} \right)
        \end{equation*}
        уровень неточности,
        \begin{equation*}
            N \geq \frac{784 \sigma^2 R^2}{BK \varepsilon^2} \Rightarrow N \geq \frac{1568 \kappa(p,d) M_2^2 R^2}{BK \varepsilon^2}.
        \end{equation*}
        Так как $K = 1$, а $N$ напрямую зависит от $B$, то количество коммуникационных раундов можно взять $N = 1$, тогда
        \begin{equation*}
            \Rightarrow B = O\left( \frac{\kappa(p,d) M_2^2 R^2}{KN \varepsilon^2} \right)
        \end{equation*}
        количество работающих параллельно машин и
        \begin{equation*}
             T = N \cdot K \cdot B = \frac{1568 \kappa(p,d) M_2^2 R^2}{ \varepsilon^2} \,\,\, \Rightarrow
        \end{equation*}
        \begin{equation*}
            \Rightarrow \,\,\, T =  O \left(  \frac{\kappa(p,d) M_2^2 R^2}{ \varepsilon^2}   \right) =
            \begin{cases}
                 O \left(  \frac{d M_2^2 R^2}{ \varepsilon^2}   \right), & p = 2 \;\;\; (q = 2),\\
                 O \left(  \frac{ M_2^2 R^2}{ \varepsilon^2}   \right), & p = 1 \;\;\; (q = \infty),
            \end{cases}
        \end{equation*}
        общее количество вызовов двухточечного безградиентного оракула;

    \item Single-Machine SMP
    
    Данный алгоритм после $NK$ итераций дает скорость сходимости для $f_\gamma (x)$ (см. следствие \ref{Optima_alg_VI}) в соответствии с замечанием \ref{remark_1}:
    \begin{equation*}
        \mathds{E}[f_{\gamma}(z_{NK}) ] \leq  \max \left\{  \frac{7}{4} \frac{L R^2}{KN}, 7 \frac{\sigma R}{\sqrt{KN}} \right\} + \frac{\sqrt{d} \Delta R}{\gamma}.
    \end{equation*}
    
    Если имеем  $\frac{\varepsilon}{2}$-точность для функции $f_{\gamma}(z)$ с $\gamma = \frac{\sqrt{d} \varepsilon}{4 M_2}$ (из следствия \ref{gamma_parameter_clear}), то имеем $\varepsilon$-точность для функции $f(z)$:
    \begin{equation*}
         f(z_{N}) \leq f_{\gamma}(z_{N})  + \frac{2}{\sqrt{d}}\gamma M_2  \leq \frac{\varepsilon}{2 } + \frac{\varepsilon}{2} =\varepsilon.
    \end{equation*}
    Чтобы была $\frac{\varepsilon}{2}$-точность для $f_{\gamma}(z) $ нужно
	\begin{equation}
	    \label{eq1_for_proof2:2_l1_randomization}
	    \frac{\sqrt{d} \Delta R}{\gamma} \leq \frac{\varepsilon}{4},
	\end{equation}
	\begin{equation}
	    \label{eq2_for_proof2:2_l1_randomization}
	   \max \left\{  \frac{7}{4} \frac{L R^2}{KN}, 7 \frac{\sigma R}{\sqrt{KN}} \right\} \leq \frac{\varepsilon}{4}.
	\end{equation}

        Подставляя $L_{f_\gamma} =  \frac{d M}{\gamma}$ (из следствия \ref{Const_Lipshitz_grad}), где $\gamma = \frac{\sqrt{d} \varepsilon}{4 M_2}$  и $ \sigma^2 = 2 \kappa(p,d) M_2^2$ (из \ref{sigma_for_two_point}), в неравенства (\ref{eq1_for_proof2:2_l1_randomization}) и (\ref{eq2_for_proof2:2_l1_randomization}), получим
        \begin{equation*}
            \Delta \leq \frac{\gamma \varepsilon }{6 d R} \,\,\, \Rightarrow \,\,\, \Delta \leq \frac{ \varepsilon^2 }{24 \sqrt{d} M_2 R} \,\,\, \Rightarrow
        \end{equation*}
        \begin{equation*}
            \Delta = O\left( \frac{ \varepsilon^2 }{\sqrt{d} M_2 R} \right)
        \end{equation*}
        уровень неточности,
        \begin{equation*}
            N \geq \frac{784 \sigma^2 R^2}{K \varepsilon^2} \Rightarrow N \geq \frac{1568 \kappa(p,d) M_2^2 R^2}{K \varepsilon^2}.
        \end{equation*}
        Так как $N$ напрямую зависит от $K$, то количество коммуникационных раундов можно взять $N = 1$, тогда
        \begin{equation*}
            \Rightarrow K = O\left( \frac{\kappa(p,d) M_2^2 R^2}{KN \varepsilon^2} \right)
        \end{equation*}
        количество локальных вызовов безградиентного оракула и
        \begin{equation*}
             T = N \cdot K \cdot B = \frac{1568 \kappa(p,d) M_2^2 R^2}{ \varepsilon^2} \,\,\, \Rightarrow
        \end{equation*}
        \begin{equation*}
            \Rightarrow \,\,\, T =  O \left(  \frac{\kappa(p,d) M_2^2 R^2}{ \varepsilon^2}   \right) =
            \begin{cases}
                 O \left(  \frac{d M_2^2 R^2}{ \varepsilon^2}   \right), & p = 2 \;\;\; (q = 2),\\
                 O \left(  \frac{ M_2^2 R^2}{ \varepsilon^2}   \right), & p = 1 \;\;\; (q = \infty),
            \end{cases}
        \end{equation*}
        общее количество вызовов двухточечного безградиентного оракула.
\end{itemize}

\underline{Для $l_2$-рандомизации имеем}:
\begin{itemize}
    \item Minibatch SMP
    
    Данный алгоритм после $N$ раундов связи дает скорость сходимости для $f_\gamma (x)$ (см. следствие \ref{Optima_alg_VI}) в соответствии с замечанием \ref{remark_1}:
    \begin{equation*}
        \mathds{E}[f_{\gamma}(z_{N}) ] \leq  \max \left\{ \frac{7}{4} \frac{L R^2}{N}, 7 \frac{ \sigma R}{\sqrt{BKN}}  \right\} + \frac{\sqrt{d} \Delta R}{\gamma}.
    \end{equation*}
    
    Если имеем  $\frac{\varepsilon}{2}$-точность для функции $f_{\gamma}(z)$ с $\gamma = \frac{\varepsilon}{2 M_2}$ (из следствия \ref{gamma_parameter_clear}), то имеем $\varepsilon$-точность для функции $f(z)$:
    \begin{equation*}
         f(z_{N}) \leq f_{\gamma}(z_{N})  + \gamma M_2  \leq \frac{\varepsilon}{2 } + \frac{\varepsilon}{2} =\varepsilon.
    \end{equation*}
    Чтобы была $\frac{\varepsilon}{2}$-точность для $f_{\gamma}(z) $ нужно
	\begin{equation}
	    \label{eq1_for_proof2:1_l2_randomization}
	    \frac{\sqrt{d} \Delta R}{\gamma} \leq \frac{\varepsilon}{4},
	\end{equation}
	\begin{equation}
	    \label{eq2_for_proof2:1_l2_randomization}
	    \max \left\{ \frac{7}{4} \frac{L R^2}{N}, 7 \frac{ \sigma R}{\sqrt{BKN}}  \right\} \leq \frac{\varepsilon}{4}.
	\end{equation}

       Подставляя $L_{f_\gamma} =  \frac{\sqrt{d} M}{\gamma}$ (из следствия \ref{Const_Lipshitz_grad}), где $\gamma = \frac{\varepsilon}{2 M_2}$  и $ \sigma^2 = 2 \kappa(p,d) M_2^2$ (из \ref{sigma_for_two_point}), в неравенства (\ref{eq1_for_proof2:1_l2_randomization}) и (\ref{eq2_for_proof2:1_l2_randomization}), получим
        \begin{equation*}
            \Delta \leq \frac{\gamma \varepsilon }{4 \sqrt{d} R} \,\,\, \Rightarrow \,\,\, \Delta \leq \frac{ \varepsilon^2 }{8 \sqrt{d} M_2 R} \,\,\, \Rightarrow
        \end{equation*}
        \begin{equation*}
            \Delta = O\left( \frac{ \varepsilon^2 }{\sqrt{d} M_2 R} \right)
        \end{equation*}
        уровень неточности,
        \begin{equation*}
            N \geq \frac{784 \sigma^2 R^2}{BK \varepsilon^2} \Rightarrow N \geq \frac{1568 \kappa(p,d) M_2^2 R^2}{BK \varepsilon^2}.
        \end{equation*}
        Так как $K = 1$, а $N$ напрямую зависит от $B$, то количество коммуникационных раундов можно взять $N = 1$, тогда
        \begin{equation*}
            \Rightarrow B = O\left( \frac{\kappa(p,d) M_2^2 R^2}{KN \varepsilon^2} \right)
        \end{equation*}
        количество работающих параллельно машин и
        \begin{equation*}
             T = N \cdot K \cdot B = \frac{1568 \kappa(p,d) M_2^2 R^2}{ \varepsilon^2} \,\,\, \Rightarrow
        \end{equation*}
        \begin{equation*}
            \Rightarrow \,\,\, T = \tilde O \left(  \frac{\kappa(p,d) M_2^2 R^2}{ \varepsilon^2}   \right) =
            \begin{cases}
                \tilde O \left(  \frac{d M_2^2 R^2}{ \varepsilon^2}   \right), & p = 2 \;\;\; (q = 2),\\
                \tilde O \left(  \frac{(\ln d) M_2^2 R^2}{ \varepsilon^2}   \right), & p = 1 \;\;\; (q = \infty),
            \end{cases}
        \end{equation*}
        общее количество вызовов двухточечного безградиентного оракула;

    \item Single-Machine SMP
    
    Данный алгоритм после $NK$ итераций дает скорость сходимости для $f_\gamma (x)$ (см. следствие \ref{Optima_alg_VI}) в соответствии с замечанием \ref{remark_1}:
    \begin{equation*}
        \mathds{E}[f_{\gamma}(z_{NK}) ] \leq  \max \left\{  \frac{7}{4} \frac{L R^2}{KN}, 7 \frac{\sigma R}{\sqrt{KN}} \right\} + \frac{\sqrt{d} \Delta R}{\gamma}.
    \end{equation*}
    
    Если имеем  $\frac{\varepsilon}{2}$-точность для функции $f_{\gamma}(z)$ с $\gamma = \frac{\varepsilon}{2 M_2}$ (из следствия \ref{gamma_parameter_clear}), то имеем $\varepsilon$-точность для функции $f(z)$:
    \begin{equation*}
         f(z_{N}) \leq f_{\gamma}(z_{N})  + \gamma M_2  \leq \frac{\varepsilon}{2 } + \frac{\varepsilon}{2} =\varepsilon.
    \end{equation*}
    Чтобы была $\frac{\varepsilon}{2}$-точность для $f_{\gamma}(z) $ нужно
	\begin{equation}
	    \label{eq1_for_proof2:2_l2_randomization}
	    \frac{\sqrt{d} \Delta R}{\gamma} \leq \frac{\varepsilon}{4},
	\end{equation}
	\begin{equation}
	    \label{eq2_for_proof2:2_l2_randomization}
	   \max \left\{  \frac{7}{4} \frac{L R^2}{KN}, 7 \frac{\sigma R}{\sqrt{KN}} \right\} \leq \frac{\varepsilon}{4}.
	\end{equation}

        Подставляя $L_{f_\gamma} =  \frac{\sqrt{d} M}{\gamma}$ (из следствия \ref{Const_Lipshitz_grad}), где $\gamma = \frac{\varepsilon}{2 M_2}$  и $ \sigma^2 = 2 \kappa(p,d) M_2^2$ (из \ref{sigma_for_two_point}), в неравенства (\ref{eq1_for_proof2:2_l2_randomization}) и (\ref{eq2_for_proof2:2_l2_randomization}), получим
        \begin{equation*}
            \Delta \leq \frac{\gamma \varepsilon }{4 \sqrt{d} R} \,\,\, \Rightarrow \,\,\, \Delta \leq \frac{ \varepsilon^2 }{8 \sqrt{d} M_2 R} \,\,\, \Rightarrow
        \end{equation*}
        \begin{equation*}
            \Delta = O\left( \frac{ \varepsilon^2 }{\sqrt{d} M_2 R} \right)
        \end{equation*}
        уровень неточности,
        \begin{equation*}
            N \geq \frac{784 \sigma^2 R^2}{K \varepsilon^2} \Rightarrow N \geq \frac{1568 \kappa(p,d) M_2^2 R^2}{K \varepsilon^2}.
        \end{equation*}
        Так как $N$ напрямую зависит от $K$, то количество коммуникационных раундов можно взять $N = 1$, тогда
        \begin{equation*}
            \Rightarrow K = O\left( \frac{\kappa(p,d) M_2^2 R^2}{KN \varepsilon^2} \right)
        \end{equation*}
        количество локальных вызовов безградиентного оракула и
        \begin{equation*}
             T = N \cdot K \cdot B = \frac{1568 \kappa(p,d) M_2^2 R^2}{ \varepsilon^2} \,\,\, \Rightarrow
        \end{equation*}
        \begin{equation*}
            \Rightarrow \,\,\, T = \tilde O \left(  \frac{\kappa(p,d) M_2^2 R^2}{ \varepsilon^2}   \right) =
            \begin{cases}
                \tilde O \left(  \frac{d M_2^2 R^2}{ \varepsilon^2}   \right), & p = 2 \;\;\; (q = 2),\\
                \tilde O \left(  \frac{(\ln d) M_2^2 R^2}{ \varepsilon^2}   \right), & p = 1 \;\;\; (q = \infty),
            \end{cases}
        \end{equation*}
        общее количество вызовов двухточечного безградиентного оракула.
        
\end{itemize}


\section{Одноточечные безградиентные алгоритмы}
\label{case_one_point}
Проделаем такую же процедуру, как и в п. \ref{subsection:Gradient_Free_Algorithms} для создания безградиентных одноточечных алгоритмов.

Для начала на примере леммы \ref{lemma_3_proof_noise_one_point} покажем, что лемма \ref{lemma_3_proof_noise_lp_randomization} также выполняется и для одноточечного оракула.
\begin{lemma}
    \label{lemma_3_proof_noise_one_point}
    Для $ \nabla f_\gamma (x,\xi,e) $ и $\nabla f_\gamma (x)$ с предположением \ref{assumption:SS_2}, выполняется следующее
    \begin{equation*}
         \mathds{E}_{\xi, e} [\langle \nabla f_\gamma (x,\xi,e), r \rangle] \geq \dotprod{ \nabla f_\gamma (x)}{r} - \frac{d \Delta \mathds{E}_{e} [ | \langle \text{sign}(e), r \rangle | ]}{\gamma},
    \end{equation*}
    где $\nabla f_\gamma$ с $l_1$-рандомизацией;
    
    \begin{equation*}
         \mathds{E}_{\xi, e} [\langle \nabla f_\gamma (x,\xi,e), r \rangle] \geq \dotprod{ \nabla f_\gamma (x)}{r} - \frac{d \Delta \mathds{E}_{e} [ | \langle e, r \rangle | ]}{\gamma},
    \end{equation*}
    где $\nabla f_\gamma$ с $l_2$-рандомизацией.
\end{lemma}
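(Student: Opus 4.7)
Мой план доказательства повторяет схему доказательства леммы~\ref{lemma_3_proof_noise_lp_randomization}, но существенно упрощается за счёт того, что одноточечная аппроксимация содержит лишь один вызов зашумлённого оракула, а не симметричную разность. Основная идея — разложить скалярное произведение $\langle \nabla f_\gamma(x,\xi,e), r\rangle$ на <<истинную>> часть, которую по формуле несмещённости можно заменить на $\langle \nabla f_\gamma(x), r\rangle$, и <<шумовую>> часть, которую нужно оценить снизу через уровень неточности $\Delta$.

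Для $l_1$-рандомизации первым шагом я подставил бы явное определение
\begin{equation*}
    \nabla f_\gamma(x,\xi,e) = \frac{d}{\gamma}\bigl(f(x+\gamma e,\xi) + \delta(x+\gamma e)\bigr)\text{sign}(e),
\end{equation*}
взял скалярное произведение с $r$ и математическое ожидание по $\xi$ и $e$. Линейность разбивает результат на два слагаемых. Первое, содержащее $f(x+\gamma e,\xi)$, после интегрирования по $\xi$ превращается в $\frac{d}{\gamma}\mathds{E}_e[f(x+\gamma e)\langle \text{sign}(e), r\rangle]$, что по лемме~\ref{lemma_1_proof_noise_l1_randomization} в точности равно $\langle \nabla f_\gamma(x), r\rangle$. Второе слагаемое я оценил бы снизу величиной $-\frac{d\Delta}{\gamma}\mathds{E}_e[|\langle \text{sign}(e), r\rangle|]$, используя предположение~\ref{assumption:SS_2} (то есть $|\delta(x+\gamma e)| \leq \Delta$) и тривиальное неравенство $\delta \geq -|\delta|$. Сложение двух оценок даёт требуемое неравенство.

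Для $l_2$-рандомизации план дословно тот же, с заменой $\text{sign}(e)$ на $e$ и применением леммы~\ref{lemma_1_proof_noise_l2_randomization} вместо леммы~\ref{lemma_1_proof_noise_l1_randomization}. Серьёзных препятствий я здесь не вижу: доказательство короче и проще, чем в двухточечном случае, поскольку отсутствует необходимость симметризировать пару разностей $f(x+\gamma e) - f(x-\gamma e)$ и $\delta(x+\gamma e) - \delta(x-\gamma e)$. Единственная содержательная деталь, на которую стоит обратить внимание, состоит в том, что в одноточечной схеме шум $\delta$ не подвергается частичному сокращению за счёт разности, однако на итоговую форму оценки — с тем же коэффициентом $d/\gamma$ перед $\Delta$ — это не влияет: различие с двухточечным случаем проявляется не здесь, а в оценке второго момента из леммы~\ref{properties_grad_f_gamma_one_point}, где дисперсия одноточечной оценки оказывается существенно больше.
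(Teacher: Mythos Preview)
Ваше предложение корректно и повторяет доказательство из статьи практически дословно: то же разбиение на <<истинную>> и <<шумовую>> части, те же ссылки на леммы~\ref{lemma_1_proof_noise_l1_randomization} и~\ref{lemma_1_proof_noise_l2_randomization} для первой части и то же применение предположения~\ref{assumption:SS_2} для второй. Ваше замечание о том, что одноточечный случай проще двухточечного из-за отсутствия симметризации, также уместно.
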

\begin{proof}\renewcommand{\qedsymbol}{}
	Рассмотрим
	\begin{itemize}
        \item[i)] для $l_1$-рандомизации (\ref{grad_l1_randomization_one_point}):
        \begin{equation*}
        	\begin{split}
        	    \nabla f_\gamma (x,\xi,e) = & \; \frac{d}{\gamma} f_\delta(x+\gamma e, \xi) \text{ sign}(e) = \frac{d}{\gamma}(f(x+\gamma e, \xi) + \delta(x+\gamma e))\text{ sign}(e) =\\
        	    = & \; \frac{d}{\gamma}(f(x+\gamma e, \xi) \text{ sign}(e) + \delta(x+\gamma e) \text{ sign}(e)).
        	\end{split}
        	\end{equation*}
        	Из данного равенства следует 
        	\begin{equation}
        	\label{eq_1_for_proof_noise_l1_randomization_one_point}
        	\begin{split}
        	    \mathds{E}_{\xi, e} [\langle \nabla f_\gamma (x,\xi,e), r \rangle] = \frac{d}{\gamma} \mathds{E}_{\xi, e} [\langle f(x+\gamma e, \xi) \text{ sign}(e), r \rangle] + \frac{d}{\gamma} \mathds{E}_{ e} [\langle \delta(x+\gamma e) \text{ sign}(e), r \rangle].
        	\end{split}
        	\end{equation}
        	Применяя лемму \ref{lemma_1_proof_noise_l1_randomization} к первому слагаемому (\ref{eq_1_for_proof_noise_l1_randomization_one_point}), получим
        	\begin{equation}
            	\begin{split}
            	\label{eq_2_for_proof_noise_l1_randomization_one_point}
            	    \frac{d}{\gamma} \mathds{E}_{\xi, e} [\langle f(x+\gamma e, \xi) \text{ sign}(e), r \rangle] = & \frac{d}{\gamma} \mathds{E}_{e} \left[ \dotprod{\mathds{E}_{\xi} \left[ f(x+\gamma e, \xi) \right] \text{ sign}(e)}{r} \right] = \\ 
            	    = & \frac{d}{\gamma} \mathds{E}_{e} [\langle f(x+\gamma e)\text{ sign}(e), r \rangle] = \dotprod{\nabla f_\gamma(x)}{r}.  
            	\end{split}
        	\end{equation}
        	Для второго слагаемого (\ref{eq_1_for_proof_noise_l1_randomization_one_point}) с предположением \ref{assumption:SS_2} получим
        	\begin{equation}
        	\label{eq_3_for_proof_noise_l1_randomization_one_point}
            \begin{split}
        	    \frac{d}{\gamma} \mathds{E}_{e} [\langle \delta(x+\gamma e)\text{ sign}(e), r \rangle] \geq  -\frac{d}{\gamma} \Delta \mathds{E}_{e} [ | \langle \text{sign}(e), r \rangle | ].
        	\end{split}
        	\end{equation}
        	Используя уравнения (\ref{eq_2_for_proof_noise_l1_randomization_one_point}) и (\ref{eq_3_for_proof_noise_l1_randomization_one_point}), для уравнения (\ref{eq_1_for_proof_noise_l1_randomization_one_point}), получим утверждение леммы для $l_1$-рандомизации.
    
    \item[ii)] для $l_2$-рандомизации (\ref{grad_l2_randomization_one_point}):
    \begin{equation*}
    	\begin{split}
    	    \nabla f_\gamma (x,\xi,e) = & \; \frac{d}{\gamma} f_\delta(x+\gamma e, \xi)e = \frac{d}{\gamma}(f(x+\gamma e, \xi) + \delta(x+\gamma e))e =\\
    	    = & \; \frac{d}{\gamma} \left( f(x+\gamma e, \xi) e + \delta(x+\gamma e) e \right).
    	\end{split}
    	\end{equation*}
    	Из данного равенства следует 
    	\begin{equation}
        \label{eq_1_for_proof_noise_l2_randomization_one_point}
            \begin{split}
                \mathds{E}_{\xi, e} [\langle \nabla f_\gamma (x,\xi,e), r \rangle] = \frac{d}{\gamma} \mathds{E}_{\xi, e} [\langle f(x+\gamma e, \xi) e, r \rangle] + \frac{d}{\gamma} \mathds{E}_{e} [\langle \delta(x+\gamma e) e, r \rangle].
            	\end{split}
        	\end{equation}
        	Применяя лемму \ref{lemma_1_proof_noise_l2_randomization} к первому слагаемому (\ref{eq_1_for_proof_noise_l2_randomization_one_point}), получим
        	\begin{equation}
            	\begin{split}
            	\label{eq_2_for_proof_noise_l2_randomization_one_point}
            	    \frac{d}{\gamma} \mathds{E}_{\xi, e} [\langle f(x+\gamma e, \xi)e, r \rangle] = & \; \frac{d}{\gamma} \mathds{E}_{e} \left[ \dotprod{\mathds{E}_{\xi} \left[ f(x+\gamma e, \xi) \right] e}{r} \right] =\\
            	    = & \; \frac{d}{\gamma} \mathds{E}_{e} [\langle f(x+\gamma e)e, r \rangle] = \dotprod{\nabla f_\gamma(x)}{r}.	    
            	\end{split}
        	\end{equation}
        	Для второго слагаемого (\ref{eq_1_for_proof_noise_l2_randomization_one_point}) с предположением \ref{assumption:SS_2} получим
        	\begin{equation}
        	\label{eq_3_for_proof_noise_l2_randomization_one_point}
        	    \frac{d}{\gamma} \mathds{E}_{e} [\langle \delta(x+\gamma e)e, r \rangle] \geq  -\frac{d}{\gamma} \Delta \mathds{E}_{e} [ | \langle e, r \rangle | ].
        	\end{equation}
        	Используя уравнения (\ref{eq_2_for_proof_noise_l2_randomization_one_point}) и (\ref{eq_3_for_proof_noise_l2_randomization_one_point}) для уравнения (\ref{eq_1_for_proof_noise_l2_randomization_one_point}) получим утверждение леммы для $l_2$-рандомизации.
\end{itemize}
\end{proof}
Так как лемма \ref{lemma_3_proof_noise_lp_randomization} выполняется для одноточечного оракула, то следствия \ref{corollary_lemma_3_l1_randomization}-\ref{corollary_lemma_3_l2_randomization} и замечание \ref{remark_1} также выполняются для одноточечного оракула. Таким образом, теперь можем получить оценки параметров одноточечных безградиентных методов для $l_1$ и $l_2$-рандомизации, аналогичным образом, как и в п. \ref{subsection:Gradient_Free_Algorithms}, используя п. \ref{subsection:SS_4}. В теореме \ref{theorem_3} представлены оценки безградиентных методов выпуклой оптимизации, а в теореме \ref{theorem_4} оценки для седловых задач.
\begin{theorem}
    \label{theorem_3}
    Схема сглаживания из разд. \ref{section:Smooth_schemes}, применяемая к задаче \eqref{Convex_opt_problem}, обеспечивает сходимость следующих одноточечных безградиентных алгоритмов: Minibatch и Single-Machine Accelerated SGD \cite{Woodworth_2021}, Local-AC-CA \cite{Woodworth_2020} и FedAc \cite{Yuan_Ma_2020}. Другими словами, для достижения $\varepsilon$ точности решения задачи \eqref{Convex_opt_problem} необходимо проделать $NK$ итераций с максимально допустимым уровнем шума $\Delta$ и общим числом вызова безградиентного оракула $T$ в соответствии с выбранным методом и схемой сглаживания:
    \begin{itemize}
        \item Minibatch Accelerated SGD
        \begin{itemize}
            \item[i)] для $l_1$-рандомизации (\ref{grad_l1_randomization_one_point}):
                \begin{equation*}
                    \Delta = O\left( \frac{ \varepsilon^2 }{\sqrt{d} M_2 R} \right); 
                \end{equation*}
                \begin{equation*}
                    N = O\left( \frac{d^{1/4} \sqrt{M M_2} R}{\varepsilon} \right); \;\;\;
                    K = 1; \;\;\;
                    B = O\left( \frac{\kappa(p,d) M_2^2 G^2 R^2}{K N \varepsilon^4} \right);
                \end{equation*}
                \begin{equation*}
                    T = \tilde O \left(  \frac{\kappa(p,d) M_2^2 G^2 R^2}{ \varepsilon^4}   \right) = \begin{cases}
                    \tilde O \left(  \frac{d^2 M_2^2 G^2 R^2}{ \varepsilon^4}   \right), & p = 2 \;\;\; (q = 2),\\
                    \tilde O \left(  \frac{d M_2^2 G^2 R^2}{ \varepsilon^4}   \right), & p = 1 \;\;\; (q = \infty),
                    \end{cases}
                \end{equation*}
                
            \item[ii)] для $l_2$-рандомизации  (\ref{grad_l2_randomization_one_point}):
                \begin{equation*}
                    \Delta = O\left( \frac{ \varepsilon^2 }{\sqrt{d} M_2 R} \right); 
                \end{equation*}
                \begin{equation*}
                    N = O\left( \frac{d^{1/4} \sqrt{M M_2} R}{\varepsilon} \right); \;\;\;
                    K = 1; \;\;\;
                    B = O\left( \frac{\kappa(p,d) M_2^2 G^2 R^2}{K N \varepsilon^4} \right);
                \end{equation*}
                \begin{equation*}
                    T = \tilde O \left(  \frac{\kappa(p,d) M_2^2 G^2 R^2}{ \varepsilon^4}   \right) = \begin{cases}
                    \tilde O \left(  \frac{d^2 M_2^2 G^2 R^2}{ \varepsilon^4}   \right), & p = 2 \;\;\; (q = 2),\\
                    \tilde O \left(  \frac{d (\ln d) M_2^2 G^2 R^2}{ \varepsilon^4}   \right), & p = 1 \;\;\; (q = \infty),
                    \end{cases}
                \end{equation*}
        \end{itemize}
        
        \item Single-Machine Accelerated SGD
        \begin{itemize}
            \item[i)] для $l_1$-рандомизации (\ref{grad_l1_randomization_one_point}):
            \begin{equation*}
                \Delta = O\left( \frac{ \varepsilon^2 }{\sqrt{d} M_2 R} \right); 
                \end{equation*}
                \begin{equation*}
                N = 1; \;\;\;
                K = O\left( \frac{\kappa(q,d) d^2 G^2 R^2}{ \varepsilon^4} \right); \;\;\;
                B = 1;
            \end{equation*}
            \begin{equation*}
                T = \tilde O \left(  \frac{\kappa(q,d) d^2 G^2 R^2}{ \varepsilon^4}   \right) = \begin{cases}
                \tilde O \left(  \frac{d^2 G^2 R^2}{ \varepsilon^4}   \right), & p = 2 \;\;\; (q = 2),\\
                \tilde O \left(  \frac{d G^2 R^2}{ \varepsilon^4}   \right), & p = 1 \;\;\; (q = \infty),
                \end{cases}
            \end{equation*}
            
            \item[ii)] для $l_2$-рандомизации  (\ref{grad_l2_randomization_one_point}):
            \begin{equation*}
                \Delta = O\left( \frac{ \varepsilon^2 }{\sqrt{d} M_2 R} \right); 
                \end{equation*}
                \begin{equation*}
                N = 1; \;\;\;
                K = O\left( \frac{\kappa(q,d) d^2 G^2 R^2}{ \varepsilon^4} \right); \;\;\;
                B = 1;
            \end{equation*}
            \begin{equation*}
                T = \tilde O \left(  \frac{\kappa(q,d) d^2 G^2 R^2}{ \varepsilon^4}   \right) = \begin{cases}
                \tilde O \left(  \frac{d^2 G^2 R^2}{ \varepsilon^4}   \right), & p = 2 \;\;\; (q = 2),\\
                \tilde O \left(  \frac{d (\ln d) G^2 R^2}{ \varepsilon^4}   \right), & p = 1 \;\;\; (q = \infty),
                \end{cases}
            \end{equation*}
        \end{itemize}
        
        \item Local-AC-SA
        \begin{itemize}
            \item[i)] для $l_1$-рандомизации (\ref{grad_l1_randomization_one_point}):
            \begin{equation*}
                \Delta = O\left( \frac{ \varepsilon^2 }{\sqrt{d} M_2 R} \right); 
                \end{equation*}
                \begin{equation*}
                N = 1; \;\;\;
                K = O\left( \frac{d^{1/4} \sqrt{M M_2} R}{\varepsilon} \right); \;\;\;
                B = O\left( \frac{\kappa(q,d) d^2 G^2 R^2}{K N \varepsilon^4} \right);
            \end{equation*}
            \begin{equation*}
                T = \tilde O \left(  \frac{\kappa(q,d) d^2 G^2 R^2}{ \varepsilon^4}   \right) = \begin{cases}
                \tilde O \left(  \frac{d^2 G^2 R^2}{ \varepsilon^4}   \right), & p = 2 \;\;\; (q = 2),\\
                \tilde O \left(  \frac{d G^2 R^2}{ \varepsilon^4}   \right), & p = 1 \;\;\; (q = \infty),
                \end{cases}
            \end{equation*}
            
            \item[ii)] для $l_2$-рандомизации  (\ref{grad_l2_randomization_one_point}):
            \begin{equation*}
                \Delta = O\left( \frac{ \varepsilon^2 }{\sqrt{d} M_2 R} \right); 
                \end{equation*}
                \begin{equation*}
                N = 1; \;\;\;
                K = O\left( \frac{d^{1/4} \sqrt{M M_2} R}{\varepsilon} \right); \;\;\;
                B = O\left( \frac{\kappa(q,d) d^2 G^2 R^2}{K N \varepsilon^4} \right);
            \end{equation*}
            \begin{equation*}
                T = \tilde O \left(  \frac{\kappa(q,d) d^2 G^2 R^2}{ \varepsilon^4}   \right) = \begin{cases}
                \tilde O \left(  \frac{d^2 G^2 R^2}{ \varepsilon^4}   \right), & p = 2 \;\;\; (q = 2),\\
                \tilde O \left(  \frac{d (\ln d) G^2 R^2}{ \varepsilon^4}   \right), & p = 1 \;\;\; (q = \infty),
                \end{cases}
            \end{equation*}
        \end{itemize}
        
        \item Federated Accelerated SGD (FedAc)
        \begin{itemize}
            \item[i)] для $l_1$-рандомизации (\ref{grad_l1_randomization_one_point}):
            \begin{equation*}
                \Delta = O\left( \frac{ \varepsilon^2 }{\sqrt{d} M_2 R} \right); 
            \end{equation*}
            \begin{equation*}
                N = O\left( \frac{d^{1/6} (\kappa(q,d) M M_2)^{1/3} G^{2/3} R^{4/3}}{K^{1/3} \varepsilon^2} \right); \;\;\;  
                K = O\left( \frac{ \kappa(q,d) d^2 G^2 R^2}{B N \varepsilon^4} \right); \;\;\; 
                B = 1;
            \end{equation*}
            \begin{equation*}
                T = \tilde O \left(  \frac{\kappa(q,d) d^2 G^2 R^2}{ \varepsilon^4}   \right) = \begin{cases}
                \tilde O \left(  \frac{d^2 G^2 R^2}{ \varepsilon^4}   \right), & p = 2 \;\;\; (q = 2),\\
                \tilde O \left(  \frac{d G^2 R^2}{ \varepsilon^4}   \right), & p = 1 \;\;\; (q = \infty),
                \end{cases}
            \end{equation*}
            
            \item[ii)] для $l_2$-рандомизации  (\ref{grad_l2_randomization_one_point}):
            \begin{equation*}
                \Delta = O\left( \frac{ \varepsilon^2 }{\sqrt{d} M_2 R} \right); 
            \end{equation*}
            \begin{equation*}
                N = O\left( \frac{d^{1/6} (\kappa(q,d) M M_2)^{1/3} G^{2/3} R^{4/3}}{K^{1/3} \varepsilon^2} \right); \;\;\;  
                K = O\left( \frac{ \kappa(q,d) d^2 G^2 R^2}{B N \varepsilon^4} \right); \;\;\; 
                B = 1;
            \end{equation*}
            \begin{equation*}
                T = \tilde O \left(  \frac{\kappa(q,d) d^2 G^2 R^2}{ \varepsilon^4}   \right) = \begin{cases}
                \tilde O \left(  \frac{d^2 G^2 R^2}{ \varepsilon^4}   \right), & p = 2 \;\;\; (q = 2),\\
                \tilde O \left(  \frac{d (\ln d) G^2 R^2}{ \varepsilon^4}   \right), & p = 1 \;\;\; (q = \infty),
                \end{cases}
            \end{equation*}
            \end{itemize}
    \end{itemize}
\end{theorem}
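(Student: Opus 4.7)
The strategy is to mirror the proof of Theorem \ref{theorem_1} essentially verbatim, substituting only the second-moment bound on the biased gradient estimator by its one-point counterpart from Lemma \ref{properties_grad_f_gamma_one_point} (equivalently Corollary \ref{sigma_for_one_point}). The key observation is that the bias inequality of Lemma \ref{lemma_3_proof_noise_lp_randomization} is reproduced verbatim for the one-point oracle in Lemma \ref{lemma_3_proof_noise_one_point}; consequently the constraint on the adversarial noise $\Delta$ and the form of the ``smoothing tax'' $d\Delta R/\gamma$ (resp. $\sqrt{d}\Delta R/\gamma$) added to every convergence bound are identical to the two-point case. Thus the bound on $\Delta$ carries over unchanged, and the only new analytic content lies in how the larger one-point variance reshapes the choice of $N$, $K$, $B$.

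Concretely, for each of the four algorithms I would start from its standard convergence bound applied to the smoothed problem \eqref{Convex_opt_problem_FL}, which is of the generic form
\begin{equation*}
\mathds{E}[f_\gamma(x^{N+1}) - f(x_*(\gamma))]
\;\lesssim\; \frac{L_{f_\gamma} R^2}{N^{\alpha}K^{\beta}}
+ \frac{\sigma R}{\sqrt{BKN}}
+ (\text{FedAc middle term if applicable})
+ \frac{d\Delta R}{\gamma},
\end{equation*}
and impose that each summand is at most $\varepsilon/\mathrm{const}$. Plugging in $\gamma$ from Corollary \ref{gamma_parameter_clear} and $L_{f_\gamma}$ from Corollary \ref{Const_Lipshitz_grad} yields exactly the same bounds on $\Delta$ and on $N$ as in Theorem \ref{theorem_1}, since neither $L_{f_\gamma}$ nor the bias coefficient depend on whether the oracle is one- or two-point. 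The statistical term, however, now uses $\sigma^2 \leq 2\kappa(p,d) G^2/\gamma^2$ with $\kappa(p,d)=d^{4-2/p}$ for $l_1$ and $\kappa(p,d)=\min\{q,\ln d\} d^{3-2/p}$ for $l_2$; substituting the respective values of $\gamma$ shows that $\sigma^2$ is inflated by a factor of order $d G^2/\varepsilon^2$ compared with the two-point case (consistent with the remark following Theorem \ref{theorem_2}).

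From here the distribution of work among $N$, $K$, $B$ is dictated by each algorithm exactly as in the proof of Theorem \ref{theorem_1}: for Minibatch Accelerated SGD I set $K=1$, read off $N$ from the smoothness term, and push $B$ up until the variance term is absorbed; for Single-Machine Accelerated SGD I set $N=B=1$ and solve for $K$; for Local-AC-SA I set $N=1$ and split between $K$ and $B$; and for FedAc I balance $N$ and $K$ so that the minimum in its convergence bound is attained on the first branch. The resulting oracle complexity in every case is $T=NKB=\tilde O\bigl(\kappa(q,d)\,d^2 G^2 R^2/\varepsilon^4\bigr)$, which specialises to the two cases $p=1$ and $p=2$ stated in the theorem.

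The principal technical obstacle, as in Theorem \ref{theorem_1}, is the FedAc analysis: one must verify that after the variance has been boosted to $\sigma^2 \sim \kappa(p,d) M_2^2 G^2/\varepsilon^2$, the first branch $L_{f_\gamma}^{1/3}\sigma^{2/3}R^{4/3}/(K^{1/3}N)$ of the min still dominates over $L_{f_\gamma}^{1/2}\sigma^{1/2}R^{3/2}/(K^{1/4}N)$ in the regime $NK \in [1,\varepsilon^{-4}]$ and $T\in[1,\varepsilon^{-4}]$. A direct comparison of exponents confirms this, producing the advertised $N=O\bigl(d^{1/6}(\kappa(q,d)MM_2)^{1/3}G^{2/3}R^{4/3}/(K^{1/3}\varepsilon^2)\bigr)$ and $K=O\bigl(\kappa(q,d)d^2 G^2 R^2/(BN\varepsilon^4)\bigr)$. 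Beyond this verification, the proof is pure bookkeeping of factors of $d$, $G$ and $M_2$ propagated from Corollary \ref{sigma_for_one_point}, and introduces no new analytic ingredients relative to the two-point proof.
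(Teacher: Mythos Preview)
Your proposal is correct and follows essentially the same approach as the paper's own proof: the paper also reproduces the proof of Theorem~\ref{theorem_1} algorithm by algorithm, invoking Lemma~\ref{lemma_3_proof_noise_one_point} to justify that the bias term (and hence the bound on $\Delta$) is unchanged, and replacing the two-point variance from Corollary~\ref{sigma_for_two_point} by the one-point variance $\sigma^2 \sim \kappa(p,d) M_2^2 G^2/\varepsilon^2$ from Corollary~\ref{sigma_for_one_point}. The paper then performs exactly the same case-by-case balancing of $N$, $K$, $B$ that you outline, including the FedAc verification that the first branch of the minimum dominates in the enlarged regime $NK,\,T\in[1,\varepsilon^{-4}]$.
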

\begin{proof}\renewcommand{\qedsymbol}{} Рассмотрим доказательство для каждой рандомизации и каждого метода отдельно

\underline{Для $l_1$-рандомизации имеем}:
    \begin{itemize}
        \item Minibatch Accelerated SGD
        
        Данный алгоритм после $N$ раундов связи дает скорость сходимости для $f_\gamma (x)$ (см. \cite{Woodworth_2021, Lan_2012}) в соответствии со следствием \ref{corollary_lemma_3_l1_randomization}:
        \begin{equation*}
            \mathds{E}[f_{\gamma}(x_{ag}^{N+1}) - f(x_{*})] \leq \frac{4L_{f_\gamma}R^2}{N^2} + \frac{4 \sigma R}{\sqrt{BKN}} + \frac{\sqrt{d} \Delta R}{\gamma},
        \end{equation*}
        где $x_{*}(\gamma) = \underset{x \in Q_\gamma}{\mathrm{argmin}}  f_{\gamma}(x)$.
        
        Если имеем  $\frac{\varepsilon}{2}$-точность для функции $f_{\gamma}(x)$ с $\gamma = \frac{ \sqrt{d} \varepsilon}{4 M_2}$ (из следствия \ref{gamma_parameter_clear}), то имеем $\varepsilon$-точность для функции $f(x)$:
        \begin{equation*}
            f_{\gamma}(x_{ag}^{N+1}) - f(x_{*}) \leq f_{\gamma}(x_{ag}^{N+1}) - f(x_{*}(\gamma)) \leq f_{\gamma}(x_{ag}^{N+1}) - f(x_{*}(\gamma)) + \frac{2}{\sqrt{d}} \gamma M_2  \leq \frac{\varepsilon}{2 } + \frac{\varepsilon}{2} =\varepsilon.
        \end{equation*}
        Чтобы была $\frac{\varepsilon}{2}$-точность для $f_{\gamma}(x) $ нужно
    	\begin{equation}
    	    \label{eq1_for_proof1:1_one_point_l1}
    	    \frac{\sqrt{d} \Delta R}{\gamma} \leq \frac{\varepsilon}{6},
    	\end{equation}
    	\begin{equation}
    	    \label{eq2_for_proof1:1_one_point_l1}
    	    \frac{4L_{f_\gamma}R^2}{ N^2} \leq \frac{\varepsilon}{6},
    	\end{equation}
    	
    	\begin{equation}
    	    \label{eq3_for_proof1:1_one_point_l1}
    	    \frac{ 4\sigma R}{\sqrt{B K N}} \leq \frac{\varepsilon}{6}.
    	\end{equation}

        Подставляя $L_{f_\gamma} =  \frac{d M}{\gamma}$ (из следствия \ref{Const_Lipshitz_grad}), где $\gamma = \frac{\sqrt{d} \varepsilon}{4 M_2}$ и $ \sigma^2 = \frac{4\kappa(p,d) M_2^2 G^2}{\varepsilon^2}$ (из следствия \ref{sigma_for_one_point}), в неравенства (\ref{eq1_for_proof1:1_one_point_l1})-(\ref{eq3_for_proof1:1_one_point_l1}), получим:
        \begin{equation*}
            \Delta \leq \frac{\gamma \varepsilon }{6 \sqrt{d} R} \,\,\, \Rightarrow \,\,\, \Delta \leq \frac{ \varepsilon^2 }{12 \sqrt{d} M_2 R} \,\,\, \Rightarrow
        \end{equation*}
        \begin{equation*}
            \Delta = O\left( \frac{ \varepsilon^2 }{\sqrt{d} M_2 R} \right)
        \end{equation*}
        уровень неточности,
        \begin{equation*}
            N^2 \geq \frac{48 \sqrt{d} M M_2 R^2}{\varepsilon^2} \Rightarrow N \geq \frac{4 \sqrt{3} d^{1/4} \sqrt{M M_2} R}{\varepsilon} \Rightarrow
        \end{equation*}
        \begin{equation*}
            \Rightarrow N = O\left( \frac{d^{1/4} \sqrt{M M_2} R}{\varepsilon} \right)
        \end{equation*}
        количество коммуникационных раундов,
        \begin{equation*}
            B \geq \frac{576 \sigma^2 R^2}{KN \varepsilon^2} \Rightarrow B \geq \frac{2304 \kappa(p,d) M_2^2 G^2 R^2}{K N \varepsilon^4} \Rightarrow
        \end{equation*}
        \begin{equation*}
            \Rightarrow B = O\left( \frac{\kappa(p,d) M_2^2 G^2 R^2}{K N \varepsilon^4} \right)
        \end{equation*}
        количество работающих параллельно машин и
        \begin{equation*}
             T = N \cdot K \cdot B = \frac{2304 \kappa(p,d) M_2^2 G^2 R^2}{ \varepsilon^4} \,\,\, \Rightarrow
        \end{equation*}
        \begin{equation*}
            \Rightarrow \,\,\, T = \tilde O \left(  \frac{\kappa(p,d) M_2^2 G^2 R^2}{ \varepsilon^4}   \right) = \begin{cases}
            \tilde O \left(  \frac{d^2 M_2^2 G^2 R^2}{ \varepsilon^4}   \right), & p = 2 \;\;\; (q = 2),\\
            \tilde O \left(  \frac{d M_2^2 G^2 R^2}{ \varepsilon^4}   \right), & p = 1 \;\;\; (q = \infty),
            \end{cases}
        \end{equation*}
        общее количество вызовов одноточечного безградиентного оракула;

        \item Single-Machine Accelerated SGD
        
        Данный алгоритм после $NK$ итераций дает скорость сходимости для $f_\gamma (x)$ (см. \cite{Woodworth_2021, Lan_2012}) в соответствии со следствием \ref{corollary_lemma_3_l1_randomization}:
        \begin{equation*}
            \mathds{E}[f_{\gamma}(x_{ag}^{NK+1}) - f(x_{*})] \leq \frac{4L_{f_\gamma}R^2}{N^2K^2} + \frac{4 \sigma R}{\sqrt{NK}} + \frac{\sqrt{d} \Delta R}{\gamma},
        \end{equation*}
        где $x_{*}(\gamma) = \underset{x \in Q_\gamma}{\mathrm{argmin}}  f_{\gamma}(x)$.
        
        Если имеем  $\frac{\varepsilon}{2}$-точность для функции $f_{\gamma}(x)$ с $\gamma = \frac{ \sqrt{d} \varepsilon}{4 M_2}$ (из следствия \ref{gamma_parameter_clear}), то имеем $\varepsilon$-точность для функции $f(x)$:
        \begin{equation*}
            f_{\gamma}(x_{ag}^{N+1}) - f(x_{*}) \leq f_{\gamma}(x_{ag}^{N+1}) - f(x_{*}(\gamma)) \leq f_{\gamma}(x_{ag}^{N+1}) - f(x_{*}(\gamma)) + \frac{2}{\sqrt{d}} \gamma M_2  \leq \frac{\varepsilon}{2 } + \frac{\varepsilon}{2} =\varepsilon.
        \end{equation*}
        Чтобы была $\frac{\varepsilon}{2}$-точность для $f_{\gamma}(x) $ нужно
    	\begin{equation}
    	    \label{eq1_for_proof1:2_one_point_l1}
    	    \frac{\sqrt{d} \Delta R}{\gamma} \leq \frac{\varepsilon}{6},
    	\end{equation}
    	\begin{equation}
    	    \label{eq2_for_proof1:2_one_point_l1}
    	    \frac{4L_{f_\gamma}R^2}{K^2 N^2} \leq \frac{\varepsilon}{6},
    	\end{equation}
    	
    	\begin{equation}
    	    \label{eq3_for_proof1:2_one_point_l1}
    	    \frac{ 4\sigma R}{\sqrt{K N}} \leq \frac{\varepsilon}{6}.
    	\end{equation}
    
        Подставляя $L_{f_\gamma} =  \frac{d M}{\gamma}$ (из следствия \ref{Const_Lipshitz_grad}), где $\gamma = \frac{\sqrt{d} \varepsilon}{4 M_2}$ и $ \sigma^2 = \frac{4\kappa(p,d) M_2^2 G^2}{\varepsilon^2}$ (из следствия \ref{sigma_for_one_point}), в неравенства (\ref{eq1_for_proof1:2_one_point_l1})-(\ref{eq3_for_proof1:2_one_point_l1}), получим
        \begin{equation*}
            \Delta \leq \frac{\gamma \varepsilon }{6 \sqrt{d} R} \,\,\, \Rightarrow \,\,\, \Delta \leq \frac{ \varepsilon^2 }{12 \sqrt{d} M_2 R} \,\,\, \Rightarrow
        \end{equation*}
        \begin{equation*}
            \Delta = O\left( \frac{ \varepsilon^2 }{\sqrt{d} M_2 R} \right)
        \end{equation*}
        уровень неточности,
        \begin{equation*}
            NK  \geq \frac{576 \sigma^2 R^2}{\varepsilon^2} \Rightarrow N K  \geq \frac{2304 \kappa(p,d) M_2^2 G^2 R^2}{ \varepsilon^4}.
        \end{equation*}
        Так как $N$ напрямую зависит от $K$, то количество коммуникачионных раундов можно взять $N = 1$, тогда
        \begin{equation*}
            NK = K = O \left( \frac{ \kappa(p,d) M_2^2 G^2 R^2}{ \varepsilon^4} \right)
        \end{equation*}
        количество локальных вызовов безградиентного оракула и
        \begin{equation*}
             T = N \cdot K \cdot B = \frac{2304 \kappa(p,d) M_2^2 G^2 R^2}{ \varepsilon^4} \,\,\, \Rightarrow
        \end{equation*}
        \begin{equation*}
            \Rightarrow \,\,\, T = \tilde O \left(  \frac{\kappa(p,d) M_2^2 G^2 R^2}{ \varepsilon^4}   \right) = \begin{cases}
            \tilde O \left(  \frac{d^2 M_2^2 G^2 R^2}{ \varepsilon^4}   \right), & p = 2 \;\;\; (q = 2),\\
            \tilde O \left(  \frac{d M_2^2 G^2 R^2}{ \varepsilon^4}   \right), & p = 1 \;\;\; (q = \infty),
            \end{cases}
        \end{equation*}
        общее количество вызовов одноточечного безградиентного оракула;
    
        \item Local-AC-SA
        
        Данный алгоритм после $N$ раундов связи дает скорость сходимости для $f_\gamma (x)$ (см. \cite{Woodworth_2020, Lan_2012}) в соответствии со следствием \ref{corollary_lemma_3_l1_randomization}:
        \begin{equation*}
            \mathds{E}[f_{\gamma}(x_{ag}^{N+1}) - f(x_{*})] \leq \frac{4L_{f_\gamma} R^2}{K^2 N^2} + \frac{4 \sigma R}{\sqrt{B K N}} + \frac{\sqrt{d} \Delta R}{\gamma},
        \end{equation*}
        где $x_{*}(\gamma) = \underset{x \in Q_\gamma}{\mathrm{argmin}}  f_{\gamma}(x)$.
        
        Если имеем  $\frac{\varepsilon}{2}$-точность для функции $f_{\gamma}(x)$ с $\gamma = \frac{ \sqrt{d} \varepsilon}{4 M_2}$ (из следствия \ref{gamma_parameter_clear}), то имеем $\varepsilon$-точность для функции $f(x)$:
        \begin{equation*}
            f_{\gamma}(x_{ag}^{N+1}) - f(x_{*}) \leq f_{\gamma}(x_{ag}^{N+1}) - f(x_{*}(\gamma)) \leq f_{\gamma}(x_{ag}^{N+1}) - f(x_{*}(\gamma)) + \frac{2}{\sqrt{d}} \gamma M_2  \leq \frac{\varepsilon}{2 } + \frac{\varepsilon}{2} =\varepsilon.
        \end{equation*}
        Чтобы была $\frac{\varepsilon}{2}$-точность для $f_{\gamma}(x) $ нужно
    	\begin{equation}
    	    \label{eq1_for_proof1:3_one_point_l1}
    	    \frac{\sqrt{d} \Delta R}{\gamma} \leq \frac{\varepsilon}{6},
    	\end{equation}
    	\begin{equation}
    	    \label{eq2_for_proof1:3_one_point_l1}
    	    \frac{4L_{f_\gamma}R^2}{K^2 N^2} \leq \frac{\varepsilon}{6},
    	\end{equation}
    	
    	\begin{equation}
    	    \label{eq3_for_proof1:3_one_point_l1}
    	    \frac{ 4\sigma R}{\sqrt{B K N}} \leq \frac{\varepsilon}{6}.
    	\end{equation}
	
       Подставляя $L_{f_\gamma} =  \frac{d M}{\gamma}$ (из следствия \ref{Const_Lipshitz_grad}), где $\gamma = \frac{\sqrt{d} \varepsilon}{4 M_2}$ и $ \sigma^2 = \frac{4\kappa(p,d) M_2^2 G^2}{\varepsilon^2}$ (из следствия \ref{sigma_for_one_point}), в неравенства (\ref{eq1_for_proof1:3_one_point_l1})-(\ref{eq3_for_proof1:3_one_point_l1}), получим
        \begin{equation*}
            \Delta \leq \frac{\gamma \varepsilon }{6 \sqrt{d} R} \,\,\, \Rightarrow \,\,\, \Delta \leq \frac{ \varepsilon^2 }{12 \sqrt{d} M_2 R} \,\,\, \Rightarrow
        \end{equation*}
        \begin{equation*}
            \Delta = O\left( \frac{ \varepsilon^2 }{\sqrt{d} M_2 R} \right)
        \end{equation*}
        уровень неточности,
        \begin{equation*}
            N^2 K^2 \geq \frac{48 \sqrt{d} M M_2 R^2}{\varepsilon^2} \Rightarrow NK \geq \frac{4 \sqrt{3} d^{1/4} \sqrt{M M_2} R}{\varepsilon}.
        \end{equation*}
        Так как $N$ напрямую зависит от $K$, то количество коммуникачионных раундов можно взять $N = 1$, тогда
        \begin{equation*}
            NK = K = O \left( \frac{d^{1/4} \sqrt{M M_2} R}{\varepsilon} \right)
        \end{equation*}
        количество локальных вызовов безградиентного оракула,
        \begin{equation*}
            B \geq \frac{576 \sigma^2 R^2}{KN \varepsilon^2} \Rightarrow B \geq \frac{2304 \kappa(p,d) M_2^2 G^2 R^2}{K N \varepsilon^4} \Rightarrow
        \end{equation*}
        \begin{equation*}
            \Rightarrow B = O\left( \frac{\kappa(p,d) M_2^2 G^2 R^2}{K N \varepsilon^4} \right)
        \end{equation*}
        количество работающих параллельно машин и
        \begin{equation*}
             T = N \cdot K \cdot B = \frac{2304 \kappa(p,d) M_2^2 G^2 R^2}{ \varepsilon^4} \,\,\, \Rightarrow
        \end{equation*}
        \begin{equation*}
            \Rightarrow \,\,\, T = \tilde O \left(  \frac{\kappa(p,d) M_2^2 G^2 R^2}{ \varepsilon^4}   \right) = \begin{cases}
            \tilde O \left(  \frac{d^2 M_2^2 G^2 R^2}{ \varepsilon^4}   \right), & p = 2 \;\;\; (q = 2),\\
            \tilde O \left(  \frac{d M_2^2 G^2 R^2}{ \varepsilon^4}   \right), & p = 1 \;\;\; (q = \infty),
            \end{cases}
        \end{equation*}
        общее количество вызовов одноточечного безградиентного оракула;
    
        \item Federated Accelerated SGD (FedAc)
        
        Данный алгоритм после $N$ раундов связи дает скорость сходимости для $f_\gamma (x)$ (см. \cite{Yuan_Ma_2020}) в соответствии со следствием \ref{corollary_lemma_3_l1_randomization}:
        \begin{equation*}
            \mathds{E}[f_{\gamma}(x_{ag}^{N+1}) - f(x_{*})] \leq \frac{L_{f_\gamma} R^2}{K N^2} + \frac{ \sigma R}{\sqrt{B K N}} + \min \left\{ \frac{L_{f_\gamma}^{1/3} \sigma^{2/3} R^{4/3}}{K^{1/3} N}, \frac{L_{f_\gamma}^{1/2} \sigma^{1/2} R^{3/2}}{K^{1/4} N} \right\} + \frac{\sqrt{d} \Delta R}{\gamma},
        \end{equation*}
        где $x_{*}(\gamma) = \underset{x \in Q_\gamma}{\mathrm{argmin}}  f_{\gamma}(x)$.
        
        Если имеем  $\frac{\varepsilon}{2}$-точность для функции $f_{\gamma}(x)$ с $\gamma = \frac{ \sqrt{d} \varepsilon}{4 M_2}$ (из следствия \ref{gamma_parameter_clear}), то имеем $\varepsilon$-точность для функции $f(x)$:
        \begin{equation*}
            f_{\gamma}(x_{ag}^{N+1}) - f(x_{*}) \leq f_{\gamma}(x_{ag}^{N+1}) - f(x_{*}(\gamma)) \leq f_{\gamma}(x_{ag}^{N+1}) - f(x_{*}(\gamma)) + \frac{2}{\sqrt{d}} \gamma M_2  \leq \frac{\varepsilon}{2 } + \frac{\varepsilon}{2} =\varepsilon.
        \end{equation*}
        Чтобы была $\frac{\varepsilon}{2}$-точность для $f_{\gamma}(x) $ нужно
    	\begin{equation}
    	    \label{eq1_for_proof1:4_one_point_l1}
    	    \frac{\sqrt{d} \Delta R}{\gamma} \leq \frac{\varepsilon}{8},
    	\end{equation}
    	\begin{equation}
    	    \label{eq2_for_proof1:4_one_point_l1}
    	    \frac{L_{f_\gamma}R^2}{K N^2} \leq \frac{\varepsilon}{8},
    	\end{equation}
    	
    	\begin{equation}
    	    \label{eq3_for_proof1:4_one_point_l1}
    	    \frac{ \sigma R}{\sqrt{B K N}} \leq \frac{\varepsilon}{8},
    	\end{equation}
    	\begin{equation}
    	    \label{eq4_for_proof1:4_one_point_l1}
    	    \min \left\{ \frac{L_{f_\gamma}^{1/3} \sigma^{2/3} R^{4/3}}{K^{1/3} N}, \frac{L_{f_\gamma}^{1/2} \sigma^{1/2} R^{3/2}}{K^{1/4} N} \right\} \leq \frac{\varepsilon}{8}.
    	\end{equation}
	
       Подставляя $L_{f_\gamma} =  \frac{d M}{\gamma}$ (из следствия \ref{Const_Lipshitz_grad}), где $\gamma = \frac{\sqrt{d} \varepsilon}{4 M_2}$ и $ \sigma^2 = \frac{4\kappa(p,d) M_2^2 G^2}{\varepsilon^2}$ (из следствия \ref{sigma_for_one_point}), в неравенства (\ref{eq1_for_proof1:4_one_point_l1})-(\ref{eq4_for_proof1:4_one_point_l1}), получим
        \begin{equation*}
            \Delta \leq \frac{\gamma \varepsilon }{8 \sqrt{d} R} \,\,\, \Rightarrow \,\,\, \Delta \leq \frac{ \varepsilon^2 }{16 \sqrt{d} M_2 R} \,\,\, \Rightarrow
        \end{equation*}
        \begin{equation*}
            \Delta = O\left( \frac{ \varepsilon^2 }{\sqrt{d} M_2 R} \right)
        \end{equation*}
        уровень неточности,
        \begin{equation*}
            NK \geq \frac{8^{1/2} K^{1/2} L^{1/2}_{f_\gamma}R}{ \varepsilon^{1/2}} \,\,\,  \Rightarrow \,\,\,  NK \geq \frac{16^{1/2} K^{1/2} d^{1/4} M^{1/2} M_2^{1/2} R}{ \varepsilon} \,\,\,  \Rightarrow \,\,\, NK \gtrsim \frac{ K^{1/2} }{ \varepsilon},
        \end{equation*}
        \begin{equation*}
            NK \geq \frac{64 \sigma^2 R^2}{B \varepsilon^2}  \,\,\, \Rightarrow \,\,\, NK \geq \frac{256 \kappa(p,d) M_2^2  G^2 R^2}{B \varepsilon^4} \,\,\,  \Rightarrow \,\,\, NK \gtrsim \frac{1}{B \varepsilon^4}
        \end{equation*}
        и
        \begin{equation*}
            NK \geq \min \left\{ \frac{K^{2/3} L_{f_\gamma}^{1/3} \sigma^{2/3} R^{4/3}}{\varepsilon}, \frac{K^{3/4} L_{f_\gamma}^{1/2} \sigma^{1/2} R^{3/2}}{ \varepsilon} \right\} \,\,\, \Rightarrow 
        \end{equation*}
        \begin{equation*}
            \Rightarrow \,\,\, NK \geq \min \left\{  \frac{16 K^{2/3} d^{1/6} (M M_2)^{1/3} \kappa(p,d)^{1/3} G^{2/3}  R^{4/3}}{ \varepsilon^{2}}, \frac{16 K^{3/4} d^{1/4} (M M_2)^{1/2} \kappa(p,d)^{1/4} G^{1/2} R^{3/2}}{ \varepsilon^{2}} \right\} \,\,\, \Rightarrow 
        \end{equation*}
        \begin{equation*}
            \Rightarrow \,\,\, NK \geq \frac{16 K^{2/3} d^{1/6} (M M_2)^{1/3} \kappa(p,d)^{1/3} G^{2/3}  R^{4/3}}{ \varepsilon^{2}} \,\,\,  \Rightarrow \,\,\, NK \gtrsim \frac{K^{2/3}}{ \varepsilon^{2}}.
        \end{equation*}
       Таким образом, наименьшее число коммуникационных раундов при условиии, что $NK \in [1, \varepsilon^{-4}]$ и $T \in [1, \varepsilon^{-4}]$, будет выглядить следующим образом:
         \begin{equation*}
            N \sim \frac{1}{\varepsilon}: \;\;\;\;\; N \geq \frac{8 L_{f_\gamma}^{1/3} \sigma^{2/3} R^{4/3}}{K^{1/3} \varepsilon} \Rightarrow \;\;\; N = O\left( \frac{d^{1/6} (\kappa(p,d) M M_2)^{1/3} G^{2/3} R^{4/3}}{K^{1/3} \varepsilon^2} \right)
        \end{equation*}
        тогда
        \begin{equation*}
            K \sim \frac{1}{\varepsilon^3}: \;\;\;\;\; K \geq \frac{ \sigma^2 R^2}{B N \varepsilon^2} \Rightarrow \;\;\; K = O\left( \frac{ \kappa(p,d) M_2^2 G^2 R^2}{B N \varepsilon^4} \right)
        \end{equation*}
        количество локальных вызовов безградиентного оракула, $B = 1$~--- количество работающих параллельно машин и
        \begin{equation*}
             T \sim \frac{1}{\varepsilon^4}: \;\;\;\;\; T = N \cdot K \cdot B = \tilde O \left( \frac{\kappa(p,d) M_2^2 G^2 R^2}{ \varepsilon^4}   \right) = \begin{cases}
            \tilde O \left(  \frac{d^2 M_2^2 G^2 R^2}{ \varepsilon^4}   \right), & p = 2 \;\;\; (q = 2),\\
            \tilde O \left(  \frac{d M_2^2 G^2 R^2}{ \varepsilon^4}   \right), & p = 1 \;\;\; (q = \infty),
            \end{cases}
        \end{equation*}
        общее количество вызовов одноточечного безградиентного оракула.
    \end{itemize}
\underline{Для $l_2$-рандомизации имеем}:
    \begin{itemize}
        \item Minibatch Accelerated SGD
        
        Данный алгоритм после $N$ раундов связи дает скорость сходимости для $f_\gamma (x)$ (см. \cite{Woodworth_2021, Lan_2012}) в соответствии со следствием \ref{corollary_lemma_3_l2_randomization}:
        \begin{equation*}
            \mathds{E}[f_{\gamma}(x_{ag}^{N+1}) - f(x_{*})] \leq \frac{4L_{f_\gamma}R^2}{N^2} + \frac{4 \sigma R}{\sqrt{BKN}} + \frac{\sqrt{d} \Delta R}{\gamma},
        \end{equation*}
        где $x_{*}(\gamma) = \underset{x \in Q_\gamma}{\mathrm{argmin}}  f_{\gamma}(x)$.
        
        Если имеем  $\frac{\varepsilon}{2}$-точность для функции $f_{\gamma}(x)$ с $\gamma = \frac{\varepsilon}{2 M_2}$ (из следствия \ref{gamma_parameter_clear}), то имеем $\varepsilon$-точность для функции $f(x)$:
        \begin{equation*}
            f_{\gamma}(x_{ag}^{N+1}) - f(x_{*}) \leq f_{\gamma}(x_{ag}^{N+1}) - f(x_{*}(\gamma)) \leq f_{\gamma}(x_{ag}^{N+1}) - f(x_{*}(\gamma)) + \gamma M_2  \leq \frac{\varepsilon}{2 } + \frac{\varepsilon}{2} =\varepsilon.
        \end{equation*}
        Чтобы была $\frac{\varepsilon}{2}$-точность для $f_{\gamma}(x) $ нужно
    	\begin{equation}
    	    \label{eq1_for_proof1:1_one_point}
    	    \frac{\sqrt{d} \Delta R}{\gamma} \leq \frac{\varepsilon}{6},
    	\end{equation}
    	\begin{equation}
    	    \label{eq2_for_proof1:1_one_point}
    	    \frac{4L_{f_\gamma}R^2}{ N^2} \leq \frac{\varepsilon}{6},
    	\end{equation}
    	
    	\begin{equation}
    	    \label{eq3_for_proof1:1_one_point}
    	    \frac{ 4\sigma R}{\sqrt{B K N}} \leq \frac{\varepsilon}{6}.
    	\end{equation}

        Подставляя $L_{f_\gamma} =  \frac{\sqrt{d} M}{\gamma}$ (из следствия \ref{Const_Lipshitz_grad}), где $\gamma = \frac{\varepsilon}{2 M_2}$ и $ \sigma^2 = \frac{4\kappa(p,d) M_2^2 G^2}{\varepsilon^2}$ (из следствия \ref{sigma_for_one_point}), в неравенства (\ref{eq1_for_proof1:1_one_point})-(\ref{eq3_for_proof1:1_one_point}), получим
        \begin{equation*}
            \Delta \leq \frac{\gamma \varepsilon }{6 \sqrt{d} R} \,\,\, \Rightarrow \,\,\, \Delta \leq \frac{ \varepsilon^2 }{12 \sqrt{d} M_2 R} \,\,\, \Rightarrow
        \end{equation*}
        \begin{equation*}
            \Delta = O\left( \frac{ \varepsilon^2 }{\sqrt{d} M_2 R} \right)
        \end{equation*}
        уровень неточности,
        \begin{equation*}
            N^2 \geq \frac{48 \sqrt{d} M M_2 R^2}{\varepsilon^2} \Rightarrow N \geq \frac{4 \sqrt{3} d^{1/4} \sqrt{M M_2} R}{\varepsilon} \Rightarrow
        \end{equation*}
        \begin{equation*}
            \Rightarrow N = O\left( \frac{d^{1/4} \sqrt{M M_2} R}{\varepsilon} \right)
        \end{equation*}
        количество коммуникационных раундов,
        \begin{equation*}
            B \geq \frac{576 \sigma^2 R^2}{KN \varepsilon^2} \Rightarrow B \geq \frac{2304 \kappa(p,d) M_2^2 G^2 R^2}{K N \varepsilon^4} \Rightarrow
        \end{equation*}
        \begin{equation*}
            \Rightarrow B = O\left( \frac{\kappa(p,d) M_2^2 G^2 R^2}{K N \varepsilon^4} \right)
        \end{equation*}
        количество работающих параллельно машин и
        \begin{equation*}
             T = N \cdot K \cdot B = \frac{2304 \kappa(p,d) M_2^2 G^2 R^2}{ \varepsilon^4} \,\,\, \Rightarrow
        \end{equation*}
        \begin{equation*}
            \Rightarrow \,\,\, T = \tilde O \left(  \frac{\kappa(p,d) M_2^2 G^2 R^2}{ \varepsilon^4}   \right) = \begin{cases}
            \tilde O \left(  \frac{d^2 M_2^2 G^2 R^2}{ \varepsilon^4}   \right), & p = 2 \;\;\; (q = 2),\\
            \tilde O \left(  \frac{d (\ln d) M_2^2 G^2 R^2}{ \varepsilon^4}   \right), & p = 1 \;\;\; (q = \infty),
            \end{cases}
        \end{equation*}
        общее количество вызовов одноточечного безградиентного оракула;

        \item Single-Machine Accelerated SGD
        
        Данный алгоритм после $NK$ итераций дает скорость сходимости для $f_\gamma (x)$ (см. \cite{Woodworth_2021, Lan_2012}) в соответствии со следствием \ref{corollary_lemma_3_l2_randomization}:
        \begin{equation*}
            \mathds{E}[f_{\gamma}(x_{ag}^{NK+1}) - f(x_{*})] \leq \frac{4L_{f_\gamma}R^2}{N^2K^2} + \frac{4 \sigma R}{\sqrt{NK}} + \frac{\sqrt{d} \Delta R}{\gamma},
        \end{equation*}
        где $x_{*}(\gamma) = \underset{x \in Q_\gamma}{\mathrm{argmin}}  f_{\gamma}(x)$.
        
        Если имеем  $\frac{\varepsilon}{2}$-точность для функции $f_{\gamma}(x)$ с $\gamma = \frac{\varepsilon}{2 M_2}$ (из следствия \ref{gamma_parameter_clear}), то имеем $\varepsilon$-точность для функции $f(x)$:
        \begin{equation*}
            f_{\gamma}(x_{ag}^{N+1}) - f(x_{*}) \leq f_{\gamma}(x_{ag}^{N+1}) - f(x_{*}(\gamma)) \leq f_{\gamma}(x_{ag}^{N+1}) - f(x_{*}(\gamma)) + \gamma M_2  \leq \frac{\varepsilon}{2 } + \frac{\varepsilon}{2} =\varepsilon.
        \end{equation*}
        Чтобы была $\frac{\varepsilon}{2}$-точность для $f_{\gamma}(x) $ нужно
    	\begin{equation}
    	    \label{eq1_for_proof1:2_one_point}
    	    \frac{\sqrt{d} \Delta R}{\gamma} \leq \frac{\varepsilon}{6},
    	\end{equation}
    	\begin{equation}
    	    \label{eq2_for_proof1:2_one_point}
    	    \frac{4L_{f_\gamma}R^2}{K^2 N^2} \leq \frac{\varepsilon}{6},
    	\end{equation}
    	
    	\begin{equation}
    	    \label{eq3_for_proof1:2_one_point}
    	    \frac{ 4\sigma R}{\sqrt{K N}} \leq \frac{\varepsilon}{6}.
    	\end{equation}
    
        Подставляя $L_{f_\gamma} =  \frac{\sqrt{d} M}{\gamma}$ (из следствия \ref{Const_Lipshitz_grad}), где $\gamma = \frac{\varepsilon}{2 M_2}$ и $ \sigma^2 = \frac{4\kappa(p,d) M_2^2 G^2}{\varepsilon^2}$ (из следствия \ref{sigma_for_one_point}), в неравенства (\ref{eq1_for_proof1:2_one_point})-(\ref{eq3_for_proof1:2_one_point}), получим
        \begin{equation*}
            \Delta \leq \frac{\gamma \varepsilon }{6 \sqrt{d} R} \,\,\, \Rightarrow \,\,\, \Delta \leq \frac{ \varepsilon^2 }{12 \sqrt{d} M_2 R} \,\,\, \Rightarrow
        \end{equation*}
        \begin{equation*}
            \Delta = O\left( \frac{ \varepsilon^2 }{\sqrt{d} M_2 R} \right)
        \end{equation*}
        уровень неточности,
        \begin{equation*}
            NK  \geq \frac{576 \sigma^2 R^2}{\varepsilon^2} \Rightarrow N K  \geq \frac{2304 \kappa(p,d) M_2^2 G^2 R^2}{ \varepsilon^4}.
        \end{equation*}
        Так как $N$ напрямую зависит от $K$, то количество коммуникачионных раундов можно взять $N = 1$, тогда
        \begin{equation*}
            NK = K = O \left( \frac{ \kappa(p,d) M_2^2 G^2 R^2}{ \varepsilon^4} \right)
        \end{equation*}
        количество локальных вызовов безградиентного оракула и
        \begin{equation*}
             T = N \cdot K \cdot B = \frac{2304 \kappa(p,d) M_2^2 G^2 R^2}{ \varepsilon^4} \,\,\, \Rightarrow
        \end{equation*}
        \begin{equation*}
            \Rightarrow \,\,\, T = \tilde O \left(  \frac{\kappa(p,d) M_2^2 G^2 R^2}{ \varepsilon^4}   \right) = \begin{cases}
            \tilde O \left(  \frac{d^2 M_2^2 G^2 R^2}{ \varepsilon^4}   \right), & p = 2 \;\;\; (q = 2),\\
            \tilde O \left(  \frac{d (\ln d) M_2^2 G^2 R^2}{ \varepsilon^4}   \right), & p = 1 \;\;\; (q = \infty),
            \end{cases}
        \end{equation*}
        общее количество вызовов одноточечного безградиентного оракула;
    
        \item Local-AC-SA
        
        Данный алгоритм после $N$ раундов связи дает скорость сходимости для $f_\gamma (x)$ (см. \cite{Woodworth_2020, Lan_2012}) в соответствии со следствием \ref{corollary_lemma_3_l2_randomization}:
        \begin{equation*}
            \mathds{E}[f_{\gamma}(x_{ag}^{N+1}) - f(x_{*})] \leq \frac{4L_{f_\gamma} R^2}{K^2 N^2} + \frac{4 \sigma R}{\sqrt{B K N}} + \frac{\sqrt{d} \Delta R}{\gamma},
        \end{equation*}
        где $x_{*}(\gamma) = \underset{x \in Q_\gamma}{\mathrm{argmin}}  f_{\gamma}(x)$.
        
        Если имеем  $\frac{\varepsilon}{2}$-точность для функции $f_{\gamma}(x)$ с $\gamma = \frac{\varepsilon}{2 M_2}$ (из следствия \ref{gamma_parameter_clear}), то имеем $\varepsilon$-точность для функции $f(x)$:
        \begin{equation*}
            f_{\gamma}(x_{ag}^{N+1}) - f(x_{*}) \leq f_{\gamma}(x_{ag}^{N+1}) - f(x_{*}(\gamma)) \leq f_{\gamma}(x_{ag}^{N+1}) - f(x_{*}(\gamma)) + \gamma M_2  \leq \frac{\varepsilon}{2 } + \frac{\varepsilon}{2} =\varepsilon.
        \end{equation*}
        Чтобы была $\frac{\varepsilon}{2}$-точность для $f_{\gamma}(x) $ нужно
    	\begin{equation}
    	    \label{eq1_for_proof1:3_one_point}
    	    \frac{\sqrt{d} \Delta R}{\gamma} \leq \frac{\varepsilon}{6},
    	\end{equation}
    	\begin{equation}
    	    \label{eq2_for_proof1:3_one_point}
    	    \frac{4L_{f_\gamma}R^2}{K^2 N^2} \leq \frac{\varepsilon}{6},
    	\end{equation}
    	
    	\begin{equation}
    	    \label{eq3_for_proof1:3_one_point}
    	    \frac{ 4\sigma R}{\sqrt{B K N}} \leq \frac{\varepsilon}{6}.
    	\end{equation}
	
       Подставляя $L_{f_\gamma} =  \frac{\sqrt{d} M}{\gamma}$ (из следствия \ref{Const_Lipshitz_grad}), где $\gamma = \frac{\varepsilon}{2 M_2}$ и $ \sigma^2 = \frac{4\kappa(p,d) M_2^2 G^2}{\varepsilon^2}$ (из следствия \ref{sigma_for_one_point}), в неравенства (\ref{eq1_for_proof1:3_one_point})-(\ref{eq3_for_proof1:3_one_point}), получим:
        \begin{equation*}
            \Delta \leq \frac{\gamma \varepsilon }{6 \sqrt{d} R} \,\,\, \Rightarrow \,\,\, \Delta \leq \frac{ \varepsilon^2 }{12 \sqrt{d} M_2 R} \,\,\, \Rightarrow
        \end{equation*}
        \begin{equation*}
            \Delta = O\left( \frac{ \varepsilon^2 }{\sqrt{d} M_2 R} \right)
        \end{equation*}
        уровень неточности,
        \begin{equation*}
            N^2 K^2 \geq \frac{48 \sqrt{d} M M_2 R^2}{\varepsilon^2} \Rightarrow NK \geq \frac{4 \sqrt{3} d^{1/4} \sqrt{M M_2} R}{\varepsilon}.
        \end{equation*}
        Так как $N$ напрямую зависит от $K$, то количество коммуникачионных раундов можно взять $N = 1$, тогда
        \begin{equation*}
            NK = K = O \left( \frac{d^{1/4} \sqrt{M M_2} R}{\varepsilon} \right)
        \end{equation*}
        количество локальных вызовов безградиентного оракула,
        \begin{equation*}
            B \geq \frac{576 \sigma^2 R^2}{KN \varepsilon^2} \Rightarrow B \geq \frac{2304 \kappa(p,d) M_2^2 G^2 R^2}{K N \varepsilon^4} \Rightarrow
        \end{equation*}
        \begin{equation*}
            \Rightarrow B = O\left( \frac{\kappa(p,d) M_2^2 G^2 R^2}{K N \varepsilon^4} \right)
        \end{equation*}
        количество работающих параллельно машин и
        \begin{equation*}
             T = N \cdot K \cdot B = \frac{2304 \kappa(p,d) M_2^2 G^2 R^2}{ \varepsilon^4} \,\,\, \Rightarrow
        \end{equation*}
        \begin{equation*}
            \Rightarrow \,\,\, T = \tilde O \left(  \frac{\kappa(p,d) M_2^2 G^2 R^2}{ \varepsilon^4}   \right) = \begin{cases}
            \tilde O \left(  \frac{d^2 M_2^2 G^2 R^2}{ \varepsilon^4}   \right), & p = 2 \;\;\; (q = 2),\\
            \tilde O \left(  \frac{d (\ln d) M_2^2 G^2 R^2}{ \varepsilon^4}   \right), & p = 1 \;\;\; (q = \infty),
            \end{cases}
        \end{equation*}
        общее количество вызовов одноточечного безградиентного оракула;
    
        \item Federated Accelerated SGD (FedAc)
        
        Данный алгоритм после $N$ раундов связи дает скорость сходимости для $f_\gamma (x)$ (см. \cite{Yuan_Ma_2020}) в соответствии со следствием \ref{corollary_lemma_3_l2_randomization}:
        \begin{equation*}
            \mathds{E}[f_{\gamma}(x_{ag}^{N+1}) - f(x_{*})] \leq \frac{L_{f_\gamma} R^2}{K N^2} + \frac{ \sigma R}{\sqrt{B K N}} + \min \left\{ \frac{L_{f_\gamma}^{1/3} \sigma^{2/3} R^{4/3}}{K^{1/3} N}, \frac{L_{f_\gamma}^{1/2} \sigma^{1/2} R^{3/2}}{K^{1/4} N} \right\} + \frac{\sqrt{d} \Delta R}{\gamma},
        \end{equation*}
        где $x_{*}(\gamma) = \underset{x \in Q_\gamma}{\mathrm{argmin}}  f_{\gamma}(x)$.
        
        Если имеем  $\frac{\varepsilon}{2}$-точность для функции $f_{\gamma}(x)$ с $\gamma = \frac{\varepsilon}{2 M_2}$ (из следствия \ref{gamma_parameter_clear}), то имеем $\varepsilon$-точность для функции $f(x)$:
        \begin{equation*}
            f_{\gamma}(x_{ag}^{N+1}) - f(x_{*}) \leq f_{\gamma}(x_{ag}^{N+1}) - f(x_{*}(\gamma)) \leq f_{\gamma}(x_{ag}^{N+1}) - f(x_{*}(\gamma)) + \gamma M_2  \leq \frac{\varepsilon}{2 } + \frac{\varepsilon}{2} =\varepsilon.
        \end{equation*}
        Чтобы была $\frac{\varepsilon}{2}$-точность для $f_{\gamma}(x) $ нужно
    	\begin{equation}
    	    \label{eq1_for_proof1:4_one_point}
    	    \frac{\sqrt{d} \Delta R}{\gamma} \leq \frac{\varepsilon}{8},
    	\end{equation}
    	\begin{equation}
    	    \label{eq2_for_proof1:4_one_point}
    	    \frac{L_{f_\gamma}R^2}{K N^2} \leq \frac{\varepsilon}{8},
    	\end{equation}
    	
    	\begin{equation}
    	    \label{eq3_for_proof1:4_one_point}
    	    \frac{ \sigma R}{\sqrt{B K N}} \leq \frac{\varepsilon}{8},
    	\end{equation}
    	\begin{equation}
    	    \label{eq4_for_proof1:4_one_point}
    	    \min \left\{ \frac{L_{f_\gamma}^{1/3} \sigma^{2/3} R^{4/3}}{K^{1/3} N}, \frac{L_{f_\gamma}^{1/2} \sigma^{1/2} R^{3/2}}{K^{1/4} N} \right\} \leq \frac{\varepsilon}{8}.
    	\end{equation}
	
       Подставляя $L_{f_\gamma} =  \frac{\sqrt{d} M}{\gamma}$ (из следствия \ref{Const_Lipshitz_grad}), где $\gamma = \frac{\varepsilon}{2 M_2}$ и $ \sigma^2 = \frac{4\kappa(p,d) M_2^2 G^2}{\varepsilon^2}$ (из следствия \ref{sigma_for_one_point}), в неравенства (\ref{eq1_for_proof1:4_one_point})-(\ref{eq4_for_proof1:4_one_point}), получим
        \begin{equation*}
            \Delta \leq \frac{\gamma \varepsilon }{8 \sqrt{d} R} \,\,\, \Rightarrow \,\,\, \Delta \leq \frac{ \varepsilon^2 }{16 \sqrt{d} M_2 R} \,\,\, \Rightarrow
        \end{equation*}
        \begin{equation*}
            \Delta = O\left( \frac{ \varepsilon^2 }{\sqrt{d} M_2 R} \right)
        \end{equation*}
        уровень неточности,
        \begin{equation*}
            NK \geq \frac{8^{1/2} K^{1/2} L^{1/2}_{f_\gamma}R}{ \varepsilon^{1/2}} \,\,\,  \Rightarrow \,\,\,  NK \geq \frac{16^{1/2} K^{1/2} d^{1/4} M^{1/2} M_2^{1/2} R}{ \varepsilon} \,\,\,  \Rightarrow \,\,\, NK \gtrsim \frac{ K^{1/2} }{ \varepsilon},
        \end{equation*}
        \begin{equation*}
            NK \geq \frac{64 \sigma^2 R^2}{B \varepsilon^2}  \,\,\, \Rightarrow \,\,\, NK \geq \frac{256 \kappa(p,d) M_2^2  G^2 R^2}{B \varepsilon^4} \,\,\,  \Rightarrow \,\,\, NK \gtrsim \frac{1}{B \varepsilon^4}
        \end{equation*}
        и
        \begin{equation*}
            NK \geq \min \left\{ \frac{K^{2/3} L_{f_\gamma}^{1/3} \sigma^{2/3} R^{4/3}}{\varepsilon}, \frac{K^{3/4} L_{f_\gamma}^{1/2} \sigma^{1/2} R^{3/2}}{ \varepsilon} \right\} \,\,\, \Rightarrow 
        \end{equation*}
        \begin{equation*}
            \Rightarrow \,\,\, NK \geq \min \left\{  \frac{16 K^{2/3} d^{1/6} (M M_2)^{1/3} \kappa(p,d)^{1/3} G^{2/3}  R^{4/3}}{ \varepsilon^{2}}, \frac{16 K^{3/4} d^{1/4} (M M_2)^{1/2} \kappa(p,d)^{1/4} G^{1/2} R^{3/2}}{ \varepsilon^{2}} \right\} \,\,\, \Rightarrow 
        \end{equation*}
        \begin{equation*}
            \Rightarrow \,\,\, NK \geq \frac{16 K^{2/3} d^{1/6} (M M_2)^{1/3} \kappa(p,d)^{1/3} G^{2/3}  R^{4/3}}{ \varepsilon^{2}} \,\,\,  \Rightarrow \,\,\, NK \gtrsim \frac{K^{2/3}}{ \varepsilon^{2}}.
        \end{equation*}
       Таким образом, наименьшее число коммуникационных раундов при условиии, что $NK \in [1, \varepsilon^{-4}]$ и $T \in [1, \varepsilon^{-4}]$, будет выглядить следующим образом:
         \begin{equation*}
            N \sim \frac{1}{\varepsilon}: \;\;\;\;\; N \geq \frac{8 L_{f_\gamma}^{1/3} \sigma^{2/3} R^{4/3}}{K^{1/3} \varepsilon} \Rightarrow \;\;\; N = O\left( \frac{d^{1/6} (\kappa(p,d) M M_2)^{1/3} G^{2/3} R^{4/3}}{K^{1/3} \varepsilon^2} \right)
        \end{equation*}
        тогда
        \begin{equation*}
            K \sim \frac{1}{\varepsilon^3}: \;\;\;\;\; K \geq \frac{ \sigma^2 R^2}{B N \varepsilon^2} \Rightarrow \;\;\; K = O\left( \frac{ \kappa(p,d) M_2^2 G^2 R^2}{B N \varepsilon^4} \right)
        \end{equation*}
        количество локальных вызовов безградиентного оракула, $B = 1$~--- количество работающих параллельно машин и
        \begin{equation*}
             T \sim \frac{1}{\varepsilon^4}: \;\;\;\;\; T = N \cdot K \cdot B = \tilde O \left( \frac{\kappa(p,d) M_2^2 G^2 R^2}{ \varepsilon^4}   \right) = \begin{cases}
            \tilde O \left(  \frac{d^2 M_2^2 G^2 R^2}{ \varepsilon^4}   \right), & p = 2 \;\;\; (q = 2),\\
            \tilde O \left(  \frac{d (\ln d) M_2^2 G^2 R^2}{ \varepsilon^4}   \right), & p = 1 \;\;\; (q = \infty),
            \end{cases}
        \end{equation*}
        общее количество вызовов одноточечного безградиентного оракула.
    \end{itemize}
\end{proof}

\begin{theorem}
\label{theorem_4}
    Схема сглаживания из разд. \ref{section:Smooth_schemes}, применяемая к седловой задаче (см. замечание \ref{remark_convex_concave}), обеспечивает сходимость следующих одноточечных безградиентных алгоритмов: Minibatch SMP и Single-Machine SMP из Приложения \ref{Upper_bound}. Другими словами, для достижения $\varepsilon$ точности решения седловой задачи (см. замечание \ref{remark_convex_concave}) необходимо проделать $NK$ итераций с максимально допустимым уровнем шума $\Delta$ и общим числом вызова безградиентного оракула $T$ в соответствии с выбранным методом и схемой сглаживания:
    \begin{itemize}
        \item Minibatch SMP
        \begin{itemize}
            \item[i)] для $l_1$-рандомизации (\ref{grad_l1_randomization_one_point}):
            \begin{equation*}
                \Delta = O\left( \frac{ \varepsilon^2 }{\sqrt{d} M_2 R} \right); 
            \end{equation*}
            \begin{equation*}
                N = 1; \;\;\;
                K = 1; \;\;\;
                B = O\left( \frac{\kappa(p,d) M_2^2 G^2 R^2}{ \varepsilon^4} \right);
            \end{equation*}
            \begin{equation*}
                T = \tilde O \left(  \frac{\kappa(p,d) M_2^2 G^2 R^2}{ \varepsilon^4}   \right) = \begin{cases}
                \tilde O \left(  \frac{d^2 M_2^2 G^2 R^2}{ \varepsilon^4}   \right), & p = 2 \;\;\; (q = 2),\\
                \tilde O \left(  \frac{d M_2^2 G^2 R^2}{ \varepsilon^4}   \right), & p = 1 \;\;\; (q = \infty),
                \end{cases}
            \end{equation*}
        
        \item[ii)] для $l_2$-рандомизации (\ref{grad_l2_randomization_one_point}):
        \begin{equation*}
                \Delta = O\left( \frac{ \varepsilon^2 }{\sqrt{d} M_2 R} \right); 
            \end{equation*}
            \begin{equation*}
                N = 1; \;\;\;
                K = 1; \;\;\;
                B = O\left( \frac{\kappa(p,d) M_2^2 G^2 R^2}{ \varepsilon^4} \right);
            \end{equation*}
            \begin{equation*}
                T = \tilde O \left(  \frac{\kappa(p,d) M_2^2 G^2 R^2}{ \varepsilon^4}   \right) = \begin{cases}
                \tilde O \left(  \frac{d^2 M_2^2 G^2 R^2}{ \varepsilon^4}   \right), & p = 2 \;\;\; (q = 2),\\
                \tilde O \left(  \frac{d (\ln d) M_2^2 G^2 R^2}{ \varepsilon^4}   \right), & p = 1 \;\;\; (q = \infty),
                \end{cases}
            \end{equation*}
        \end{itemize}
        
        \item Single-Machine SMP
        
        \item[i)] для $l_1$-рандомизации (\ref{grad_l1_randomization_one_point}):
            \begin{equation*}
                \Delta = O\left( \frac{ \varepsilon^2 }{\sqrt{d} M_2 R} \right); 
            \end{equation*}
            \begin{equation*}
                N = 1; \;\;\;
                K = O\left( \frac{\kappa(p,d) M_2^2 G^2 R^2}{ \varepsilon^4} \right); \;\;\;
                B = 1;
            \end{equation*}
            \begin{equation*}
                T = \tilde O \left(  \frac{\kappa(p,d) M_2^2 G^2 R^2}{ \varepsilon^4}   \right) = \begin{cases}
                \tilde O \left(  \frac{d^2 M_2^2 G^2 R^2}{ \varepsilon^4}   \right), & p = 2 \;\;\; (q = 2),\\
                \tilde O \left(  \frac{d M_2^2 G^2 R^2}{ \varepsilon^4}   \right), & p = 1 \;\;\; (q = \infty),
                \end{cases}
            \end{equation*}
            
        \item[ii)] для $l_2$-рандомизации (\ref{grad_l2_randomization_one_point}):
            \begin{equation*}
                \Delta = O\left( \frac{ \varepsilon^2 }{\sqrt{d} M_2 R} \right); 
            \end{equation*}
            \begin{equation*}
                N = 1; \;\;\;
                K = O\left( \frac{\kappa(p,d) M_2^2 G^2 R^2}{ \varepsilon^4} \right); \;\;\;
                B = 1;
            \end{equation*}
            \begin{equation*}
                T = \tilde O \left(  \frac{\kappa(p,d) M_2^2 G^2 R^2}{ \varepsilon^4}   \right) = \begin{cases}
                \tilde O \left(  \frac{d^2 M_2^2 G^2 R^2}{ \varepsilon^4}   \right), & p = 2 \;\;\; (q = 2),\\
                \tilde O \left(  \frac{d (\ln d) M_2^2 G^2 R^2}{ \varepsilon^4}   \right), & p = 1 \;\;\; (q = \infty),
                \end{cases}
            \end{equation*}
    \end{itemize}
\end{theorem}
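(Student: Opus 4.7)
The plan is to mirror, one-to-one, the proof of Theorem~\ref{theorem_2} for the two-point oracle, with the larger one-point variance bound of Corollary~\ref{sigma_for_one_point} substituted in place of the two-point bound of Corollary~\ref{sigma_for_two_point}, and the one-point bias estimate of Lemma~\ref{lemma_3_proof_noise_one_point} used in place of Lemma~\ref{lemma_3_proof_noise_lp_randomization}. For each of the two methods (Minibatch SMP and Single-Machine SMP) and each randomisation ($l_1$ and $l_2$), I would start from the convergence guarantee on $f_\gamma$ provided by Corollary~\ref{Optima_alg_VI}, extended to saddle-point problems via Remark~\ref{remark_convex_concave} (and Remark~\ref{remark_1} for the noise), augmented by a bias term of order $d\Delta R/\gamma$ for $l_1$-randomisation or $\sqrt{d}\Delta R/\gamma$ for $l_2$-randomisation coming from Lemma~\ref{lemma_3_proof_noise_one_point}.

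Next, Corollary~\ref{gamma_parameter_clear} reduces an $\varepsilon$-accurate solution of the original saddle-point problem to an $\varepsilon/2$-accurate solution of the smoothed one. I would split the right-hand side of the convergence guarantee into its three contributions — the bias term, the Lipschitz term $L_{f_\gamma} R^2/N$, and the stochastic term $\sigma R/\sqrt{BKN}$ — and require each to be at most a constant multiple of $\varepsilon$. Plugging in $\gamma$ from Corollary~\ref{gamma_parameter_clear}, $L_{f_\gamma}$ from Corollary~\ref{Const_Lipshitz_grad}, and the one-point variance $\sigma^2 \lesssim \kappa(p,d) M_2^2 G^2/\varepsilon^2$ from Corollary~\ref{sigma_for_one_point}, the bias inequality yields $\Delta = O(\varepsilon^2/(\sqrt{d} M_2 R))$ (the factor of $\sqrt{d}$ distinguishing the $l_1$- and $l_2$-biases is absorbed by the corresponding $\sqrt{d}$ in $\gamma$, so the admissible noise coincides for the two randomisations), while the stochastic inequality yields $BKN \gtrsim \kappa(p,d) M_2^2 G^2 R^2/\varepsilon^4$.

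For Minibatch SMP I would then set $N = K = 1$ and read off $B = O(\kappa(p,d) M_2^2 G^2 R^2/\varepsilon^4)$; for Single-Machine SMP I would instead set $N = B = 1$ and $K = O(\kappa(p,d) M_2^2 G^2 R^2/\varepsilon^4)$. In either case $T = NKB$ has the same order, and the four stated cases in the theorem follow by specialising $p \in \{1,2\}$ and using $\kappa(p,d) = d^{4-2/p}$ for $l_1$-randomisation or $\kappa(p,d) = \min\{q,\ln d\} d^{3-2/p}$ for $l_2$-randomisation.

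The only non-mechanical obstacle is the a-priori justification that the first-order convergence rate of SMP survives the small systematic bias of the one-point oracle; this is exactly the robustness hypothesis formulated in the paragraph preceding Theorem~\ref{theorem_1} (formulas \eqref{Noise_for_l1}--\eqref{Noise_for_l2}), with its one-point version inherited from Lemma~\ref{lemma_3_proof_noise_one_point}. Once this is granted, the rest of the argument is arithmetic identical to the proof of Theorem~\ref{theorem_2}, except that the two-point variance is replaced by its order-$dG^2/\varepsilon^2$ (respectively $d^2 G^2/\varepsilon^2$) larger one-point analogue — this is precisely the $O(d/\varepsilon^2)$ blow-up in oracle complexity announced in the remark following Theorem~\ref{theorem_2}.
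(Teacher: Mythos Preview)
Your proposal is correct and follows essentially the same approach as the paper's own proof: start from the SMP convergence bound of Corollary~\ref{Optima_alg_VI} augmented by the one-point bias term (via Lemma~\ref{lemma_3_proof_noise_one_point} and Remark~\ref{remark_1}), reduce to $\varepsilon/2$-accuracy on $f_\gamma$ via Corollary~\ref{gamma_parameter_clear}, split into component inequalities, substitute $L_{f_\gamma}$, $\gamma$, and the one-point $\sigma^2$ from Corollaries~\ref{Const_Lipshitz_grad} and~\ref{sigma_for_one_point}, and solve for $\Delta$, $N$, $K$, $B$, $T$. Your remark that the $\sqrt d$ discrepancy between the $l_1$- and $l_2$-bias terms is exactly cancelled by the $\sqrt d$ discrepancy in the two choices of $\gamma$ is in fact cleaner than the paper's presentation, which writes the bias term somewhat inconsistently across the cases but arrives at the same final bound.
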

\begin{proof}\renewcommand{\qedsymbol}{} Рассмотрим доказательство для каждой рандомизации и каждого метода отдельно

\underline{Для $l_1$-рандомизации имеем}:
\begin{itemize}
    \item Minibatch SMP
        
        Данный алгоритм после $N$ раундов связи дает скорость сходимости для $f_\gamma (x)$ (см. следствие \ref{Optima_alg_VI}) в соответствии с замечанием \ref{remark_1}:
        \begin{equation*}
            \mathds{E}[f_{\gamma}(z_{N}) ] \leq  \max \left\{ \frac{7}{4} \frac{L R^2}{N}, 7 \frac{ \sigma R}{\sqrt{BKN}}  \right\} + \frac{\sqrt{d} \Delta R}{\gamma}.
        \end{equation*}
        
        Если имеем  $\frac{\varepsilon}{2}$-точность для функции $f_{\gamma}(z)$ с $\gamma = \frac{\sqrt{d} \varepsilon}{4 M_2}$ (из следствия \ref{gamma_parameter_clear}), то имеем $\varepsilon$-точность для функции $f(z)$:
        \begin{equation*}
             f(z_{N}) \leq f_{\gamma}(z_{N})  + \frac{2}{\sqrt{d}} \gamma M_2  \leq \frac{\varepsilon}{2 } + \frac{\varepsilon}{2} =\varepsilon.
        \end{equation*}
        Чтобы была $\frac{\varepsilon}{2}$-точность для $f_{\gamma}(z) $ нужно
    	\begin{equation}
    	    \label{eq1_for_proof2:1_one_point_l1}
    	    \frac{\sqrt{d} \Delta R}{\gamma} \leq \frac{\varepsilon}{4},
    	\end{equation}
    	\begin{equation}
    	    \label{eq2_for_proof2:1_one_point_l1}
    	    \max \left\{ \frac{7}{4} \frac{L R^2}{N}, 7 \frac{ \sigma R}{\sqrt{BKN}}  \right\} \leq \frac{\varepsilon}{4}.
    	\end{equation}
    
       Подставляя $L_{f_\gamma} =  \frac{d M}{\gamma}$ (из следствия \ref{Const_Lipshitz_grad}), где $\gamma = \frac{\varepsilon}{4 M_2}$ и $ \sigma^2 = \frac{4\kappa(p,d) M_2^2 G^2}{\varepsilon^2}$ (из следствия \ref{sigma_for_one_point}), в неравенства (\ref{eq1_for_proof2:1_one_point_l1}) и (\ref{eq2_for_proof2:1_one_point_l1}), получим
        \begin{equation*}
            \Delta \leq \frac{\gamma \varepsilon }{4 \sqrt{d} R} \,\,\, \Rightarrow \,\,\, \Delta \leq \frac{ \varepsilon^2 }{8 \sqrt{d} M_2 R} \,\,\, \Rightarrow
        \end{equation*}
        \begin{equation*}
            \Delta = O\left( \frac{ \varepsilon^2 }{\sqrt{d} M_2 R} \right)
        \end{equation*}
        уровень неточности,
        \begin{equation*}
            N \geq \frac{784 \sigma^2 R^2}{BK \varepsilon^2} \Rightarrow N \geq \frac{3136 \kappa(p,d) M_2^2 G^2 R^2}{BK \varepsilon^4} \Rightarrow
        \end{equation*}
        Так как $K = 1$, а $N$ напрямую зависит от $B$, то количество коммуникационных раундов можно взять $N = 1$, тогда
        \begin{equation*}
            \Rightarrow B = O\left( \frac{\kappa(p,d) M_2^2 G^2 R^2}{K N \varepsilon^4} \right)
        \end{equation*}
        количество работающих параллельно машин и
        \begin{equation*}
             T = N \cdot K \cdot B = \frac{2304 \kappa(p,d) M_2^2 G^2 R^2}{ \varepsilon^4} \,\,\, \Rightarrow
        \end{equation*}
        \begin{equation*}
            \Rightarrow \,\,\, T = \tilde O \left(  \frac{\kappa(p,d) M_2^2 G^2 R^2}{ \varepsilon^4}   \right) = \begin{cases}
            \tilde O \left(  \frac{d^2 M_2^2 G^2 R^2}{ \varepsilon^4}   \right), & p = 2 \;\;\; (q = 2),\\
            \tilde O \left(  \frac{d M_2^2 G^2 R^2}{ \varepsilon^4}   \right), & p = 1 \;\;\; (q = \infty),
            \end{cases}
        \end{equation*}
        общее количество вызовов одноточечного безградиентного оракула;
    
        \item Single-Machine SMP
        
        Данный алгоритм после $NK$ итераций дает скорость сходимости для $f_\gamma (x)$ (см. следствие \ref{Optima_alg_VI}) в соответствии с замечанием \ref{remark_1}:
        \begin{equation*}
            \mathds{E}[f_{\gamma}(z_{NK}) ] \leq  \max \left\{  \frac{7}{4} \frac{L R^2}{KN}, 7 \frac{\sigma R}{\sqrt{KN}} \right\} + \frac{\sqrt{d} \Delta R}{\gamma}.
        \end{equation*}
        
        Если имеем  $\frac{\varepsilon}{2}$-точность для функции $f_{\gamma}(z)$ с $\gamma = \frac{\sqrt{d} \varepsilon}{4 M_2}$ (из следствия \ref{gamma_parameter_clear}), то имеем $\varepsilon$-точность для функции $f(z)$:
        \begin{equation*}
             f(z_{N}) \leq f_{\gamma}(z_{N})  + \frac{2}{\sqrt{d}} \gamma M_2  \leq \frac{\varepsilon}{2 } + \frac{\varepsilon}{2} =\varepsilon.
        \end{equation*}
        Чтобы была $\frac{\varepsilon}{2}$-точность для $f_{\gamma}(z) $ нужно
    	\begin{equation}
    	    \label{eq1_for_proof2:2_one_point_l1}
    	    \frac{\sqrt{d} \Delta R}{\gamma} \leq \frac{\varepsilon}{4},
    	\end{equation}
    	\begin{equation}
    	    \label{eq2_for_proof2:2_one_point_l1}
    	   \max \left\{  \frac{7}{4} \frac{L R^2}{KN}, 7 \frac{\sigma R}{\sqrt{KN}} \right\} \leq \frac{\varepsilon}{4}.
    	\end{equation}
	
        Подставляя $L_{f_\gamma} =  \frac{\sqrt{d} M}{\gamma}$ (из следствия \ref{Const_Lipshitz_grad}), где $\gamma = \frac{\varepsilon}{4 M_2}$ и $ \sigma^2 = \frac{4\kappa(p,d) M_2^2 G^2}{\varepsilon^2}$ (из следствия \ref{sigma_for_one_point}), в неравенства (\ref{eq1_for_proof2:2_one_point_l1}) и (\ref{eq2_for_proof2:2_one_point_l1}), получим
        \begin{equation*}
            \Delta \leq \frac{\gamma \varepsilon }{4 \sqrt{d} R} \,\,\, \Rightarrow \,\,\, \Delta \leq \frac{ \varepsilon^2 }{8 \sqrt{d} M_2 R} \,\,\, \Rightarrow
        \end{equation*}
        \begin{equation*}
            \Delta = O\left( \frac{ \varepsilon^2 }{\sqrt{d} M_2 R} \right)
        \end{equation*}
        уровень неточности,
        \begin{equation*}
            N \geq \frac{784 \sigma^2 R^2}{K \varepsilon^2} \Rightarrow N \geq \frac{3136 \kappa(p,d) M_2^2 G^2 R^2}{K \varepsilon^4} \Rightarrow
        \end{equation*}
        Так как $N$ напрямую зависит от $K$, то количество коммуникационных раундов можно взять $N = 1$, тогда
        \begin{equation*}
            \Rightarrow K = O\left( \frac{\kappa(p,d) M_2^2 G^2 R^2}{K N \varepsilon^4} \right)
        \end{equation*}
        количество локальных вызовов безградиентного оракула и
        \begin{equation*}
             T = N \cdot K \cdot B = \frac{2304 \kappa(p,d) M_2^2 G^2 R^2}{ \varepsilon^4} \,\,\, \Rightarrow
        \end{equation*}
        \begin{equation*}
            \Rightarrow \,\,\, T = \tilde O \left(  \frac{\kappa(p,d) M_2^2 G^2 R^2}{ \varepsilon^4}   \right) = \begin{cases}
            \tilde O \left(  \frac{d^2 M_2^2 G^2 R^2}{ \varepsilon^4}   \right), & p = 2 \;\;\; (q = 2),\\
            \tilde O \left(  \frac{d M_2^2 G^2 R^2}{ \varepsilon^4}   \right), & p = 1 \;\;\; (q = \infty),
            \end{cases}
        \end{equation*}
        общее количество вызовов одноточечного безградиентного оракула.
    \end{itemize}

\underline{Для $l_2$-рандомизации имеем}:
    \begin{itemize}
    \item Minibatch SMP
        
        Данный алгоритм после $N$ раундов связи дает скорость сходимости для $f_\gamma (x)$ (см. следствие \ref{Optima_alg_VI}) в соответствии с замечанием \ref{remark_1}:
        \begin{equation*}
            \mathds{E}[f_{\gamma}(z_{N}) ] \leq  \max \left\{ \frac{7}{4} \frac{L R^2}{N}, 7 \frac{ \sigma R}{\sqrt{BKN}}  \right\} + \frac{\sqrt{d} \Delta R}{\gamma}.
        \end{equation*}
        
        Если имеем  $\frac{\varepsilon}{2}$-точность для функции $f_{\gamma}(z)$ с $\gamma = \frac{\varepsilon}{2 M_2}$ (из следствия \ref{gamma_parameter_clear}), то имеем $\varepsilon$-точность для функции $f(z)$:
        \begin{equation*}
             f(z_{N}) \leq f_{\gamma}(z_{N})  + \gamma M_2  \leq \frac{\varepsilon}{2 } + \frac{\varepsilon}{2} =\varepsilon.
        \end{equation*}
        Чтобы была $\frac{\varepsilon}{2}$-точность для $f_{\gamma}(z) $ нужно
    	\begin{equation}
    	    \label{eq1_for_proof2:1_one_point}
    	    \frac{\sqrt{d} \Delta R}{\gamma} \leq \frac{\varepsilon}{4},
    	\end{equation}
    	\begin{equation}
    	    \label{eq2_for_proof2:1_one_point}
    	    \max \left\{ \frac{7}{4} \frac{L R^2}{N}, 7 \frac{ \sigma R}{\sqrt{BKN}}  \right\} \leq \frac{\varepsilon}{4}.
    	\end{equation}
    
       Подставляя $L_{f_\gamma} =  \frac{\sqrt{d} M}{\gamma}$ (из следствия \ref{Const_Lipshitz_grad}), где $\gamma = \frac{\varepsilon}{2 M_2}$ и $ \sigma^2 = \frac{4\kappa(p,d) M_2^2 G^2}{\varepsilon^2}$ (из следствия \ref{sigma_for_one_point}), в неравенства (\ref{eq1_for_proof2:1_one_point}) и (\ref{eq2_for_proof2:1_one_point}), получим
        \begin{equation*}
            \Delta \leq \frac{\gamma \varepsilon }{4 \sqrt{d} R} \,\,\, \Rightarrow \,\,\, \Delta \leq \frac{ \varepsilon^2 }{8 \sqrt{d} M_2 R} \,\,\, \Rightarrow
        \end{equation*}
        \begin{equation*}
            \Delta = O\left( \frac{ \varepsilon^2 }{\sqrt{d} M_2 R} \right)
        \end{equation*}
        уровень неточности,
        \begin{equation*}
            N \geq \frac{784 \sigma^2 R^2}{BK \varepsilon^2} \Rightarrow N \geq \frac{3136 \kappa(p,d) M_2^2 G^2 R^2}{BK \varepsilon^4} \Rightarrow
        \end{equation*}
        Так как $K = 1$, а $N$ напрямую зависит от $B$, то количество коммуникационных раундов можно взять $N = 1$, тогда
        \begin{equation*}
            \Rightarrow B = O\left( \frac{\kappa(p,d) M_2^2 G^2 R^2}{K N \varepsilon^4} \right)
        \end{equation*}
        количество работающих параллельно машин и
        \begin{equation*}
             T = N \cdot K \cdot B = \frac{2304 \kappa(p,d) M_2^2 G^2 R^2}{ \varepsilon^4} \,\,\, \Rightarrow
        \end{equation*}
        \begin{equation*}
            \Rightarrow \,\,\, T = \tilde O \left(  \frac{\kappa(p,d) M_2^2 G^2 R^2}{ \varepsilon^4}   \right) = \begin{cases}
            \tilde O \left(  \frac{d^2 M_2^2 G^2 R^2}{ \varepsilon^4}   \right), & p = 2 \;\;\; (q = 2),\\
            \tilde O \left(  \frac{d (\ln d) M_2^2 G^2 R^2}{ \varepsilon^4}   \right), & p = 1 \;\;\; (q = \infty),
            \end{cases}
        \end{equation*}
        общее количество вызовов одноточечного безградиентного оракула;
    
        \item Single-Machine SMP
        
        Данный алгоритм после $NK$ итераций дает скорость сходимости для $f_\gamma (x)$ (см. следствие \ref{Optima_alg_VI}) в соответствии с замечанием \ref{remark_1}:
        \begin{equation*}
            \mathds{E}[f_{\gamma}(z_{NK}) ] \leq  \max \left\{  \frac{7}{4} \frac{L R^2}{KN}, 7 \frac{\sigma R}{\sqrt{KN}} \right\} + \frac{\sqrt{d} \Delta R}{\gamma}.
        \end{equation*}
        
        Если имеем  $\frac{\varepsilon}{2}$-точность для функции $f_{\gamma}(z)$ с $\gamma = \frac{\varepsilon}{2 M_2}$ (из следствия \ref{gamma_parameter_clear}), то имеем $\varepsilon$-точность для функции $f(z)$:
        \begin{equation*}
             f(z_{N}) \leq f_{\gamma}(z_{N})  + \gamma M_2  \leq \frac{\varepsilon}{2 } + \frac{\varepsilon}{2} =\varepsilon.
        \end{equation*}
        Чтобы была $\frac{\varepsilon}{2}$-точность для $f_{\gamma}(z) $ нужно
    	\begin{equation}
    	    \label{eq1_for_proof2:2_one_point}
    	    \frac{\sqrt{d} \Delta R}{\gamma} \leq \frac{\varepsilon}{4},
    	\end{equation}
    	\begin{equation}
    	    \label{eq2_for_proof2:2_one_point}
    	   \max \left\{  \frac{7}{4} \frac{L R^2}{KN}, 7 \frac{\sigma R}{\sqrt{KN}} \right\} \leq \frac{\varepsilon}{4}.
    	\end{equation}
	
        Подставляя $L_{f_\gamma} =  \frac{\sqrt{d} M}{\gamma}$ (из следствия \ref{Const_Lipshitz_grad}), где $\gamma = \frac{\varepsilon}{2 M_2}$ и $ \sigma^2 = \frac{4\kappa(p,d) M_2^2 G^2}{\varepsilon^2}$ (из следствия \ref{sigma_for_one_point}), в неравенства (\ref{eq1_for_proof2:2_one_point}) и (\ref{eq2_for_proof2:2_one_point}), получим:
        \begin{equation*}
            \Delta \leq \frac{\gamma \varepsilon }{4 \sqrt{d} R} \,\,\, \Rightarrow \,\,\, \Delta \leq \frac{ \varepsilon^2 }{8 \sqrt{d} M_2 R} \,\,\, \Rightarrow
        \end{equation*}
        \begin{equation*}
            \Delta = O\left( \frac{ \varepsilon^2 }{\sqrt{d} M_2 R} \right)
        \end{equation*}
        уровень неточности,
        \begin{equation*}
            N \geq \frac{784 \sigma^2 R^2}{K \varepsilon^2} \Rightarrow N \geq \frac{3136 \kappa(p,d) M_2^2 G^2 R^2}{K \varepsilon^4} \Rightarrow
        \end{equation*}
        Так как $N$ напрямую зависит от $K$, то количество коммуникационных раундов можно взять $N = 1$, тогда
        \begin{equation*}
            \Rightarrow K = O\left( \frac{\kappa(p,d) M_2^2 G^2 R^2}{K N \varepsilon^4} \right)
        \end{equation*}
        количество локальных вызовов безградиентного оракула и
        \begin{equation*}
             T = N \cdot K \cdot B = \frac{2304 \kappa(p,d) M_2^2 G^2 R^2}{ \varepsilon^4} \,\,\, \Rightarrow
        \end{equation*}
        \begin{equation*}
            \Rightarrow \,\,\, T = \tilde O \left(  \frac{\kappa(p,d) M_2^2 G^2 R^2}{ \varepsilon^4}   \right) = \begin{cases}
            \tilde O \left(  \frac{d^2 M_2^2 G^2 R^2}{ \varepsilon^4}   \right), & p = 2 \;\;\; (q = 2),\\
            \tilde O \left(  \frac{d (\ln d) M_2^2 G^2 R^2}{ \varepsilon^4}   \right), & p = 1 \;\;\; (q = \infty),
            \end{cases}
        \end{equation*}
        общее количество вызовов одноточечного безградиентного оракула.
    \end{itemize}
\end{proof}



\end{document}